\newtheorem{theorem}{Theorem}[section]
\newtheorem{lemma}{Lemma}[section]
\newtheorem{remark}{Remark}[section]
\renewcommand{\theequation}{\arabic{section}.\arabic{equation}}
\newcommand{\bvec}[1]{\mathbf{#1}}
\newcommand{\vh}{\bvec{h}}
\newcommand{\vi}{\bvec{i}}
\newcommand{\vn}{\bvec{n}}
\newcommand{\vp}{\bvec{p}}
\newcommand{\vr}{\bvec{r}}
\newcommand{\vs}{\bvec{s}}
\newcommand{\vR}{\bvec{R}}
\newcommand{\R}{\mathbb{R}}
\newcommand{\node}{\mathrm{\Xi}}
\newcommand{\pro}{\pi_{p,\node}}
\newcommand{\Node}{\bm{\node}}
\newcommand{\Spline}{S_{\vp}(\Node)}
\newcommand{\Splinei}{S_{\vp^{(i)}}(\Node^{(i)})}
\newcommand{\Pro}{\pi_{\vp,\Node}}
\newcommand{\Ne}{N_{\rm e}}
\newcommand{\dd}{{\rm d}}
\newcommand{\dg}{{\rm DG}}
\newcommand{\facet}{\mathcal{F}_{\mathcal{I}}}
\title{Numerical Analysis of Multi-patch Discontinuous Galerkin Isogeometric Method for Full-potential Electronic Structure Calculations
}
\author{Xiaoxu Li\thanks{{\it xiaoxuli@bnu.edu.cn}.
Faculty of Arts and Sciences, Beijing Normal University, No.18 Jinfeng Road, Tangjiawan, Zhuhai 519087, Guangdong, China.} 
~and Xucheng Meng \thanks{{\it Corresponding. xcmeng@bnu.edu.cn}.
{Research Center for Mathematics, Beijing Normal University, Zhuhai 519087, Guangdong, China \& Guangdong Provincial Key Laboratory of Interdisciplinary Research and Application for Data Science, 
BNU-HKBU United International College, Zhuhai 519087, Guangdong, China}}
}
\date{}
\begin{document}
\maketitle

\begin{abstract}
In this paper, we study the multi-patch discontinuous Galerkin isogeometric (DG-IGA) approximations for full-potential electronic structure calculations.
We decompose the physical domain into several subdomains, represent each part of the wavefunction separately using B-spline basis functions, possibly with different degrees, on varying mesh sizes, and then combine them by DG methods.
We also provide a rigorous {\em a priori} error analysis of the DG-IGA approximations for linear eigenvalue problems.
Furthermore, this work offers a unified analysis framework for the DG-IGA method applied to a class of elliptic eigenvalue problems.
Finally, we present several numerical experiments to verify our theoretical results.

\end{abstract}

\textbf{Keywords:} multi-patch isogeometric analysis, discontinuous Galerkin method, eigenvalue problem, electronic structure calculations, full-potential.

\section{Introduction}\label{sec-intro} 
\setcounter{equation}{0}

Electronic structure calculations describe the energies and distributions of electrons, which are fundamental to many fields, including materials science, biochemistry, solid-state physics, and surface physics. 
Among the various electronic structure models, the Kohn-Sham density functional theory (DFT) \cite{martin05} offers the best compromise between precision and computational cost.
For an $\Ne$-electron system with $M$ nuclei, each with charge $Z_k$ located at ${\bf R}_k\in\mathbb{R}^3$~$(k=1,\cdots,M)$,
Kohn-Sham DFT leads to the following nonlinear eigenvalue problems
\begin{eqnarray}\label{eq:eigen}
H u_i=\lambda_i u_i,\quad\lambda_1\leq\lambda_2\leq\cdots\leq\lambda_{\Ne},
\end{eqnarray}
where the Kohn-Sham Hamiltonian is given by
\begin{eqnarray*}
H=-\frac{1}{2}\Delta+V_{\rm ext}+V_{\rm H}[\rho]+V_{\rm xc}[\rho]. 
\end{eqnarray*}
Note that the Hamiltonian $H$ depends on the first $\Ne$ eigenfunctions.
Here, $\displaystyle V_{\rm ext}({\vr})=-\sum_{k=1}^M\frac{Z_k}{|{\vr}-{\vR}_k|}$  is the external potential generated by nuclear attraction,
$\displaystyle V_{\rm H} [\rho]=\int_{\mathbb{R}^3}\frac{\rho({\vr'})}{|\cdot-{\vr'}|} \dd {\vr'}$ and
$ V_{\rm xc}[\rho]$ are the so-called Hartree potential and the exchange-correlation potential, respectively,
with the electron density $\displaystyle \rho(\vr)=\sum_{i=1}^{\Ne}|u_i(\vr)|^2$.
A self-consistent field (SCF) iteration algorithm \cite{cances00,lebris03} is commonly used to solve these nonlinear problems. 
In each iteration, a Hamiltonian $H$ is constructed from trial electronic states $\{\tilde{u}_1,\cdots,\tilde{u}_{\Ne}\}$, and a linear eigenvalue problem is solved to obtain the low-lying eigenvalues and corresponding eigenfunctions. The process continues until the electronic states reach self-consistency.
The efficiency of this algorithm is primarily influenced by the discretization of the Hamiltonian, the self-consistent iteration, and the linear eigensolver. In this paper, we will focus on the discretization method.

In practical simulations, the eigenvalue problem \eqref{eq:eigen} is typically computed in a bounded domain with homogeneous Dirichlet boundary condition, as the eigenfunctions of \eqref{eq:eigen} decay exponentially as $|\vr|\rightarrow \infty$ (see, e.g., \cite{almbladh1985exact}).
For eigenvalue problems with singular potentials in full-potential calculations, methods based on discretization using linear combinations of atomic orbitals, such as Gaussians and Slater-type orbitals \cite{herring40, martin05}, have been widely used in quantum chemistry (see \cite{bachmayr14, chen15b} for their numerical analysis).
The main advantage of atomic orbitals is their efficiency, as the number of basis functions required is much fewer compared to other methods. 
However, this efficiency comes at the cost of a lack of systematic convergence \cite{batcho2000computational}.
There has also been a number of studies on finite element methods (FEM) in electronic structure calculations \cite{beck2000real,fang2012kohn, pask1999real,pask2001finite, tsuchida1995electronic}.
Although FEM requires more degrees of freedom than methods using atomic orbitals, it results in a sparse algebraic eigenvalue problem and can be implemented with adaptive refinement techniques \cite{BAO20124967,bao2013numerical,chen2014adaptive,dai2008convergence}.

Isogeometric analysis (IGA), introduced by Hughes, Cottrell and Bazilevs in \cite{hughes2005isogeometric}, is a collection of methods (commonly referred to as {\it isogeometric methods}) that use B-splines, or some of their extensions such as NURBS (non-uniform rational B-splines), as basis functions to build approximation spaces for solving partial differential equations numerically (see, e.g., \cite{da2014mathematical} for a mathematical review).
Compared to FEM, IGA methods offer a more flexible and accurate representation of complex geometries and, more importantly, provide smoother basis functions \cite{bazilevs2006isogeometric,da2014mathematical}.
However, due to their global construction, IGA methods are not suitable for local refinement. 
In recent years, there has been increasing interest in applying IGA to electronic structure calculations \cite{cimrman2018convergence,cimrman2018isogeometric,temizer2020nurbs}.

For the full-potential calculation with singular potentials, it has been shown that the eigenfunction exhibits cusps at the nuclei positions \cite{fournais02,fournais04, fournais07,hoffmann01}.
In this paper, we decompose the physical domain into several subdomains (referred to as {\em patches}), represent each part of the wavefunction separately using B-spline basis functions with (possibly) different degrees on varying mesh sizes, and then patch them together using DG methods.


The DG framework has been widely used to obtain numerical solutions of partial differential equations, and has been extensively studied theoretically in numerous works (see, e.g., \cite{arnold00,babuska73,buffa08,harriman03,suli00} and references cited therein).
Multi-patch IGA combined with DG methods has also been explored for various problems \cite{CHAN201822,duvigneau2018isogeometric,langer2015analysis,MICHOSKI2016658,moore2019space}.
For electronic structure calculations, Lin et al. \cite{lin12, zhang17} construct basis functions adaptively from the local environment and patch them together in the global domain using DG methods. In \cite{li2019discontinuous}, one of our authors employs different basis functions in different regions, which are then connected using the DG method.

In this paper, we present an {\em a priori} error analysis of our DG-IGA approximations for linear eigenvalue problems. Our analysis is closely related to the techniques used in \cite{antonietti06, harriman03, osborn75, suli00}.
The contributions of this work are twofold. First, this is the first work to utilize the discontinuous Galerkin isogeometric method for the electronic structure calculations, along with an {\em a priori} error estimate (see Theorem \ref{them:convergence-rate}). Second, this work provides a unified analysis framework for the discontinuous Galerkin isogeometric method applied to a class of elliptic eigenvalue problems, which, to our knowledge, is novel.

\vskip 0.2cm
{\bf Outline.} 
The remainder of this paper is organized as follows. 
In Section \ref{sec:bspline}, we briefly introduce the B-spline basis functions and the projection error on the approximation space. 
In Section \ref{sec:DG-approximation}, we present a DG-IGA discretization scheme, along with a numerical analysis of convergence and an {\it a priori} error estimate for the DG-IGA approximation.
We display several numerical examples to verify our theoretical results in Section \ref{sec-numerical}.
Finally, we offer concluding remarks in Section \ref{sec-conclusions}, and collect the detailed proofs of our results in the appendices.

\vskip 0.2cm

{\bf Notations.} 
Let the computational domain be $\Omega=[-D/2,D/2]^d~(D>0)$ with the physical dimension $d=1,2,3$ and
$L^2(\Omega)$ be the space of square integrable functions with the inner product denoted by $(\cdot,\cdot)$.
We introduce the Sobolev space 
$H^s(\Omega) =\{v \in L^2(\Omega) ~:~ D^{\bm{\alpha}} v \in L^2(\Omega), ~ \text{for} ~ 0 \leq |\bm{\alpha}| \leq s  \}$ with the equipped norm 
$\|v\|_{H^s(\Omega)} = \left( \sum_{0 \leq |\bm{\alpha}| \leq s} \|D^{\bm{\alpha}} v\|^2_{L^2(\Omega)} \right)^{1/2}$, where $\bm{\alpha} = (\alpha_1,\cdots,\alpha_d)$ is a multi-index with non-negative integers $\alpha_1,\cdots,\alpha_d$, and the partial derivative $D^{\bm{\alpha}} := \partial^{|\bm{\alpha}|}/\partial \vr^{\bm{\alpha}}$ with $|\bm{\alpha}| = \alpha_1 +\cdots + \alpha_d$.
We also denote $H^s_0(\Omega)= \{v \in H^s(\Omega) ~:~ v=0 ~ \text{on} ~ \partial\Omega \}$ and set $H^0(\Omega)=L^2(\Omega)$.
The notation $\mathscr{L}(X)$ is used to represent the set of bounded operators from the Banach space $X$ to itself with the norm 
$\|T\|_{\mathscr{L}(X)} = \sup_{f\in X} \|Tf\|_{X}/\|f\|_{X}$.

Throughout this paper, the symbol $C$ will denote a generic positive constant that may change from one line to the next, which will always be independent of the approximation parameters under consideration.
The dependence of $C$ will normally be clear from the context or stated explicitly.

\section{B-spline approximations}
\label{sec:bspline}
\setcounter{equation}{0}

In this section, we present a brief overview of the B-spline approximations, which provides some vital information for the formulation and discussion of multi-patch isogeometric analysis.
We first introduce the univariate B-spline basis and the projection onto the corresponding function space. These definitions are then extended to the multi-dimensional case using tensor products, and we conclude with an error estimate for multivariate B-spline approximations.

For positive integers $p$ and $n$, let us define a vector $\mathrm{\Xi} = \left \{0=\xi_1,\ldots,\xi_{n+p+1}=1\right\}$ with a non-decreasing sequence of real numbers in the parametric domain $[0,1]$, which is called a {\em knot} vector on the unit interval.
In this work, we will only work with {\it open} knot vectors, which means that the first $p+1$ knots in $\node$ are equal to $0$, and the last $p+1$ knots are equal to $1$.
Given $\mathrm{\Xi}$ with  degree $p \geq 1$ and the number of basis functions $n$, the univariate B-spline basis functions are defined by the Cox-de Boor recursion formula \cite{schumaker2007spline}
\begin{align}
B_{i,0}(\xi)  & = \left\{
\begin{array}{ll}\nonumber
1 \qquad \qquad &  \text{if} \quad \xi_{i} \leq \xi < \xi_{i+1}, \\[1ex]
0 & \text{otherwise}, \\
\end{array}\right. \\[1ex]
B_{i,p}(\xi)  & = \frac{\xi-\xi_{i}}{\xi_{i+p}
- \xi_{i}}B_{i,p-1}(\xi)
+\frac{\xi_{i+p+1}-\xi}{\xi_{i+p+1}-\xi_{i+1}}B_{i+1,p-1}(\xi) \qquad p\geq 1,
\end{align}
where any division by zero is defined to be zero. 
In general, a basis function of the degree $p$ is $p-m$ times continuously differentiable across a knot with the multiplicity $m$. For simplicity, we assume that all the internal knots have the multiplicity $m = 1$, then B-splines of degree $p$ are globally $(p-1)-$continuously differentiable. 
In addition, each $B_{i,p}$ is non-negative and supported in $(\xi_{i}, \xi_{i+p+1})$.

We then define the projection $\displaystyle \pro: L^2([0,1]) \rightarrow \text{span}\{B_{i,p},~i=1,\cdots,n\}$ by
\begin{eqnarray}
\pro f := \sum_{i=1}^n \mathcal{D}_{i,p}(f) B_{i,p}  \qquad \forall~f\in L^2([0,1]),
\end{eqnarray}
where the dual basis functions  $\mathcal{D}_{i,p}$ \cite {da2014mathematical,schumaker2007spline} satisfy 
\begin{align}\nonumber
&\mathcal{D}_{i,p}(B_{j,p}) = \delta_{ij} \qquad i,j=1,2,\cdots,n \qquad \text{with} \\[1ex]
&\mathcal{D}_{1,p}(f) = f(0), \quad \mathcal{D}_{n,p}(f) = f(1).
\end{align}

In general higher-dimensional cases, the B-spline basis functions are tensor product of the univariate B-spline basis functions. 
Let $ \mathrm{\Xi}_{\alpha} = \left \{0=\xi_{1,\alpha},\ldots,\xi_{n_{\alpha}+p_{\alpha}+1,\alpha}=1\right\}$ be the knot vectors for every direction $\alpha = 1,\ldots,d$, and $\Node=\node_1\times\cdots\times\node_d$.
Let $\mathbf{i} = (i_1,\ldots,i_{d})$, $\mathbf{p} = (p_1,\ldots,p_{d})$ and the set 
$\mathcal{I}=\{\mathbf{i} = (i_1,\ldots,i_{d}) : 1\leq i_\alpha \leq n_\alpha\} $ be multi-indices.  
Then the tensor product B-spline basis functions are defined by
\begin{equation}
B_{\vi,\vp}(\bm{\xi}) := \prod\limits_{\alpha=1}^{d}B_{i_\alpha,p_\alpha}(\xi_{\alpha}),
\end{equation}
where $\bm{\xi} = (\xi_1,\ldots,\xi_{d}) \in \widehat{\Omega} := [0,1]^d$. 
Associated with the knot vectors $\Node$, the domain $[0,1]^d$ is partitioned into rectangles or rectangular prisms, forming a mesh $\mathcal{Q}_h$. For any $Q \in \mathcal{Q}_h$, we define $h_Q = \text{diam}(Q)$, and the global mesh size is given by $h = \max\{ h_Q, ~Q \in \mathcal{Q}_h \}$.

The multivariate spline space in the parametric domain $\widehat{\Omega}$ is given by 
\begin{equation}
\Spline = \text{span}\{B_{\mathbf{i},\mathbf{p}}, ~\mathbf{i} \in  \mathcal{I} \}.
\end{equation}
It is natural to extend the projection to multi-dimensional case by
\begin{eqnarray}
\Pro := \pi_{\vp_1,\Node_1} \otimes \cdots \otimes \pi_{\vp_d,\Node_d}: L^2([0,1]^d) \rightarrow \Spline.
\end{eqnarray}
From \cite{bazilevs2006isogeometric, da2014mathematical}, and \cite[Chapter 12]{schumaker2007spline}, we have the following approximation error estimate.

\begin{lemma}[{\bf projection error}]
\label{lemma:bspline-err}
Let $0\leq k\leq s$ and $\displaystyle p=\min \{p_i\}$.
If $f\in H^{s}([0,1]^d)$, then there exists a positive constant $C$ such that
\begin{eqnarray}
	\|f- \Pro f\|_{H^k([0,1]^d)}  \leq C
	h^{t-k} \|f\|_{H^t([0,1]^d)}
\end{eqnarray}
with $t=\min\{p+1,s\}$.
\end{lemma}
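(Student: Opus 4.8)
The plan is to reduce the multivariate estimate to a univariate one and then lift it through the tensor-product structure of the projector $\Pro$.

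\medskip
\noindent\textbf{Step 1 (univariate estimate).} First I would establish the one-dimensional version: for $0\le k\le s$, a single knot vector $\node$ of degree $p$ with mesh size $h$, and $t=\min\{p+1,s\}$, show $\|f-\pro f\|_{H^k([0,1])}\le C\,h^{t-k}\,|f|_{H^t([0,1])}$. The ingredients are (i) the local support of each $B_{i,p}$ together with the boundedness of the dual functionals $\mathcal{D}_{i,p}$, which make $\pro$ a locally $L^2$-stable quasi-interpolant; and (ii) local polynomial reproduction, namely that on each knot span $Q=(\xi_i,\xi_{i+1})$ the operator $\pro$ reproduces polynomials of degree $\le p$ on the support-extension $\tilde Q$ of $Q$. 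Combining these two facts with the Bramble--Hilbert lemma yields the local bound $\|f-\pro f\|_{H^k(Q)}\le C\,h_Q^{t-k}\,|f|_{H^t(\tilde Q)}$; summing over the knot spans, using the finite overlap of the support extensions, gives the global univariate estimate (which implies the stated bound since $|f|_{H^t}\le\|f\|_{H^t}$).

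\medskip
\noindent\textbf{Step 2 (tensor-product telescoping).} Writing $P_\alpha$ for the univariate projector $\pi_{p_\alpha,\node_\alpha}$ acting on the $\alpha$-th variable, the operators $P_1,\dots,P_d$ commute (they act on distinct coordinates) and $\Pro=P_1\cdots P_d$. I would then use the telescoping identity
$$
I-\Pro=\sum_{\alpha=1}^{d}\Big(\prod_{\beta<\alpha}P_\beta\Big)(I-P_\alpha),
$$
so that the total error is a sum of terms in each of which exactly one factor $(I-P_\alpha)$ carries the approximation error in a single direction while the remaining factors are stable projectors. Applying the univariate estimate of Step 1 in the $\alpha$-direction and the uniform $h$-independent stability of each $P_\beta$ to the remaining factors gives $\|(\prod_{\beta<\alpha}P_\beta)(I-P_\alpha)f\|_{H^k}\le C\,h^{t-k}\,\|f\|_{H^t}$; summing over $\alpha=1,\dots,d$ completes the proof.

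\medskip
\noindent\textbf{Step 3 (main obstacle).} The delicate point is Step 2: controlling the partially-projected intermediate quantities in the correct (possibly mixed) Sobolev norms and ensuring the stability constants of the $P_\beta$ are uniform in $h$. One must verify that applying $\pro$ in one variable does not deteriorate the regularity needed to invoke the univariate estimate in another variable; this requires the $H^k$-stability of the univariate projector for $0\le k\le t$, together with care over mixed derivatives so that the right-hand side collapses to the isotropic norm $\|f\|_{H^t}$. The reduction to the smallest degree, $t=\min\{p+1,s\}$ with $p=\min_i p_i$, is precisely what renders the final bound isotropic and lets the per-direction estimates combine cleanly. Once the univariate stability and polynomial-reproduction properties are in hand (as in \cite{bazilevs2006isogeometric,da2014mathematical} and \cite[Chapter 12]{schumaker2007spline}), the remaining bookkeeping is routine.
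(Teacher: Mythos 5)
The paper does not actually prove this lemma: it is stated as a known result, with the proof deferred to \cite{bazilevs2006isogeometric}, \cite{da2014mathematical}, and \cite[Chapter~12]{schumaker2007spline}. Your outline — a univariate estimate via locally stable quasi-interpolation, polynomial reproduction, and the Bramble--Hilbert lemma, followed by tensor-product telescoping with $h$-uniform stability of the univariate projectors — is precisely the standard argument given in those references, so your proposal is correct in approach and consistent with the proof the paper relies on.
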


\section{DG multi-patch isogeometric method}
\label{sec:DG-approximation} 
\setcounter{equation}{0}
In this section, we construct a DG isogeometric discretization scheme using distinct sizes of mesh in different subdomains.
We provide an {\it a priori} error analysis of the numerical approximations.
Our analysis is composed of three steps: 
First, we estimate the best approximation errors in each subdomain separately;
We then give an error estimate for the DG approximation of the corresponding source problem;
Finally, we derive an error estimate for the eigenvalue problem.
Note that the errors generated by numerical quadratures and linear algebraic
solvers are not considered in this paper, which deserve separate investigations.

As a model problem, we  consider the following Schr\"{o}dinger-type linear eigenvalue problem, which can be viewed as a
linearization of \eqref{eq:eigen}: Find $\lambda\in\mathbb{R}$ and $0\neq u\in H_{0}^1(\Omega)$ such that $\|u\|_{L^2(\Omega)}=1$ and
\begin{eqnarray}
	\label{model-eq}
	a(u,v)=\lambda(u,v) \qquad\forall~v\in H_{0}^1(\Omega),
\end{eqnarray}
where the bilinear form $a(\cdot,\cdot):H_{0}^1(\Omega)\times H_{0}^1(\Omega)\rightarrow\mathbb{C}$ is given by
\begin{eqnarray}
	\label{bilinear-a}
	a(u,v)=\frac{1}{2}\int_{\Omega}\nabla u\cdot\nabla v\, {\rm d}{\vr} +\int_{\Omega} Vuv\,{\rm d
    }{\vr}
\end{eqnarray}
with the potential $V\in L^2(\Omega)$.
The existence and uniqueness of the variational problem \eqref{model-eq} 
follows the well-known Lax-Milgram Theorem. 

It was shown in \cite{fournais02,fournais04,fournais07} that the exact electron densities are analytic away from the nuclei and satisfy certain cusp conditions at the nuclei positions (the regularity is not better than $H^{5/2}(\Omega)$ around the singularities for $d=3$). 
It can be seen from Lemma \ref{lemma:bspline-err} that the low regularity of wavefunction limits the convergence rate of high-order B-spline approximations.
However, the wavefunction is smooth except near the nuclei positions. 
This motivates us to decompose the computational domain into different subdomains and represent the wavefunctions separately in each region.

\subsection{Multi-patch discretization}
\label{sec-discretization}

To achieve the distinct representations of wavefunctions in different regions, the physical domain $\Omega$ is divided into $N$ non-overlapping subdomains $\{ \Omega_i \}_{i=1}^N$, referred to as {\em patches} such that $\overline{\Omega} = \bigcup_{i=1}^N \overline{\Omega}_i$ and $ \Omega_i \cap \Omega_j = \emptyset$ for $i \neq j$.
We require that the singularities (i.e., the nuclei positions) are contained within the interior of some patches.
For simplicity, we will focus our discussion on a single atom located at the origin, though the algorithms and analysis can be easily generalized to multi-atom systems.
Throughout this paper, we denote the interior faces shared by two patches as $F_{ij} = \partial \Omega_i \cap \partial \Omega_j ~(i \neq j)$. The collection of all such interior and external faces is denoted by $\facet$ and $\mathcal{F}_{\mathcal{E}}$, respectively.

Each patch is the image of an associated geometrical mapping $G_i$, such that $\Omega_i = G_i(\widehat{\Omega}) $ for $i = 1, \ldots, N$. 
In general, some regularity condition is assumed for the geometrical mapping $G_i$, such as the bi-Lipschitz homeomorphism (see, e.g., \cite{da2014mathematical}). 
However, in our model problem, the computational domain can be simply divided into rectangular subdomains, allowing the geometrical maps to be simple re-scaling functions.
For $1\leq i\leq N$, the map is given by
\begin{eqnarray}
\label{rescale_map}
G_i(\bm{\widehat{\xi}}) = A_i \bm{\widehat{\xi}} \qquad \forall~ \bm{\hat{\xi}}\in\widehat{\Omega}\equiv[0,1]^d,
\end{eqnarray}
where $A_i\in\R^{d\times d}$ is a diagonal matrix.

Let $\mathcal{Q}_{h_i,i}$ be the mesh for the patch $\Omega_i$ with $1\leq i\leq N$.
For each patch, we assume that the underlying mesh $\mathcal{Q}_{h_i,i}$ is quasi-uniform, that is, there exists a constant $C_{\rm m}\geq 1$ such that
\begin{eqnarray}\nonumber
\label{quasi-uniform-mesh}
h_Q  \leq h_i \leq C_{\rm m} h_Q \qquad\forall~Q \in \mathcal{Q}_{h_i, i},
\end{eqnarray}
where $h_i = \max\{h_Q, ~Q\in\mathcal{Q}_{h_i,i}\}$ is the mesh size of patch $\Omega_i$.
In the following, we will use $\vh=(h_1,  \cdots, h_N)$, $h_{\rm min} = \min(h_1, \cdots, h_N)$ and $h_{\rm max} = \max(h_1, \cdots, h_N) $ to represent mesh sizes of patches in the computational domain $\Omega$, the minimum and maximum mesh sizes, respectively.
Let $\vp^{(i)}$ and $\Node^{(i)}$ be the degree of polynomials and the knot vectors of the patch $\Omega_i$ for $i=1,\cdots,N$.
We can define the spline space in the physical subdomain $\Omega_i$ by 
\begin{equation}
V^{(i)}_{h_i} = \{f \circ G^{-1}_i : f\in\Splinei \},
\end{equation}
and the corresponding projection $\Pi_i: L^2(\Omega_i)\rightarrow V^{(i)}_{h_i}$ by
\begin{eqnarray}
\Pi_{i} f := \pi_{\vp^{(i)},\Node^{(i)}}(f\circ G_i) \circ G^{-1}_i \qquad \forall~ f\in L^2(\Omega_i).
\end{eqnarray}

With a slight abuse of the notation, we denote $\displaystyle p_i=\min \{p^{(i)}_1,\cdots,p^{(i)}_d\}$ hereafter, unless otherwise stated.
Lemma \ref{lemma:bspline-err} can be directly generalized to the following estimate on the physical subdomains.  The generalization for our model problem with the re-scaling maps \eqref{rescale_map} is straightforward, while the generalization for patches decomposition with more complex geometrical maps is provided in \cite{bazilevs2006isogeometric}.

\begin{lemma}[{\bf projection error on physical subdomain}]
\label{lemma:multi-patch-projection-err}
Let $0\leq k\leq s_i$ and $1\leq i\leq N$.
If $f\in H^{s_i}(\Omega_i)$, then there exists a positive constant $C$ depending on $G_i$ such that
\begin{eqnarray}	
\label{multi-patch-projection-err}
\|f- \Pi_i f\|_{H^k(\Omega_i)}  \leq C
h_i^{t_i-k} \|f\|_{H^{t_i}(\Omega_i)}
\end{eqnarray}
with $t_i=\min\{p_i+1,s_i\}$.
\end{lemma}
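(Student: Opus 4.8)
The plan is to transport the estimate back to the parametric domain $\widehat{\Omega}$ through the geometric map $G_i$ and then invoke Lemma \ref{lemma:bspline-err}. Writing $\hat f := f\circ G_i$ and abbreviating $\hat\pi := \pi_{\vp^{(i)},\Node^{(i)}}$, the definitions of $V^{(i)}_{h_i}$ and $\Pi_i$ give the exact identity $f-\Pi_i f = (\hat f-\hat\pi\,\hat f)\circ G_i^{-1}$, since $f=(f\circ G_i)\circ G_i^{-1}=\hat f\circ G_i^{-1}$ and $\Pi_i f=(\hat\pi\,\hat f)\circ G_i^{-1}$. It therefore suffices to (i) compare the $H^k(\Omega_i)$-norm of a pulled-back parametric function with its $H^k(\widehat{\Omega})$-norm, (ii) apply the parametric projection estimate of Lemma \ref{lemma:bspline-err} to the error $\hat f-\hat\pi\,\hat f$, and (iii) re-express the parametric mesh size in terms of the physical mesh size $h_i$.

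First I would record the change-of-variables relations induced by the affine diagonal map $G_i(\bm{\widehat{\xi}})=A_i\bm{\widehat{\xi}}$ with $A_i=\mathrm{diag}(a_1,\dots,a_d)$. Since $r_\ell=a_\ell\xi_\ell$, the chain rule yields $D^{\bm\alpha}\!\big(\hat g\circ G_i^{-1}\big)=\big(\prod_{\ell=1}^{d}a_\ell^{-\alpha_\ell}\big)\,(D^{\bm\alpha}\hat g)\circ G_i^{-1}$ for every multi-index $\bm\alpha=(\alpha_1,\dots,\alpha_d)$, while the volume element scales by the constant Jacobian $\det A_i=\prod_{\ell}a_\ell$. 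Squaring, integrating over $\Omega_i$, and changing variables gives, for each $\bm\alpha$, an identity $\|D^{\bm\alpha}(\hat g\circ G_i^{-1})\|_{L^2(\Omega_i)}=c_{\bm\alpha}(A_i)\,\|D^{\bm\alpha}\hat g\|_{L^2(\widehat{\Omega})}$ with $c_{\bm\alpha}(A_i)$ a fixed constant determined by $A_i$. Summing over $0\le|\bm\alpha|\le k$ then yields the two-sided equivalence $C_1(G_i)\,\|\hat g\|_{H^k(\widehat{\Omega})}\le\|\hat g\circ G_i^{-1}\|_{H^k(\Omega_i)}\le C_2(G_i)\,\|\hat g\|_{H^k(\widehat{\Omega})}$, with constants depending only on $G_i$.

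With these in hand the estimate closes quickly. Applying the upper equivalence to $\hat g=\hat f-\hat\pi\,\hat f$, then Lemma \ref{lemma:bspline-err} in the parametric domain, and finally the lower equivalence at level $t_i$ to pass from $\|\hat f\|_{H^{t_i}(\widehat{\Omega})}$ back to $\|f\|_{H^{t_i}(\Omega_i)}$, I obtain $\|f-\Pi_i f\|_{H^k(\Omega_i)}\le C(G_i)\,\hat h^{\,t_i-k}\,\|f\|_{H^{t_i}(\Omega_i)}$, where $\hat h$ is the mesh size of $\mathcal{Q}_h$. It remains only to replace $\hat h$ by $h_i$: because $G_i$ maps $\mathcal{Q}_h$ onto $\mathcal{Q}_{h_i,i}$ by the fixed scaling $A_i$, the element diameters obey $\hat h\le C(G_i)\,h_i$, and absorbing this factor (together with the quasi-uniformity constant $C_{\rm m}$) into $C$ produces the claimed bound with $t_i=\min\{p_i+1,s_i\}$.

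The one point requiring genuine care — and the place where the dependence of $C$ on $G_i$ enters — is the anisotropic bookkeeping: since the diagonal entries $a_\ell$ may differ, each directional derivative scales by a different power of the corresponding $a_\ell$, so the per-multi-index constants $c_{\bm\alpha}(A_i)$ are nonuniform and must be tracked to ensure the equivalence constants are finite and, crucially, independent of $h_i$. For the simple re-scaling maps \eqref{rescale_map} this is elementary, and the general bi-Lipschitz case is handled, as noted in the text, by the arguments of \cite{bazilevs2006isogeometric}; in either case the exponent of $h_i$ is untouched, being produced entirely by the parametric estimate of Lemma \ref{lemma:bspline-err}.
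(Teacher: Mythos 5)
Your proposal is correct and takes exactly the route the paper intends: the paper gives no detailed proof, stating only that Lemma \ref{lemma:bspline-err} ``can be directly generalized'' to the physical subdomains via the re-scaling maps \eqref{rescale_map} (citing \cite{bazilevs2006isogeometric} for general geometrical maps), and your argument is precisely that straightforward generalization written out in full. The pull-back identity $f-\Pi_i f=(\hat f-\hat\pi\hat f)\circ G_i^{-1}$, the anisotropic norm equivalences with constants depending on $A_i$, and the comparison of parametric and physical mesh sizes are all sound, so your write-up simply supplies the details the paper omits.
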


Before ending this subsection, we present the following trace inequality in the spline space, which plays an important role for error analysis in the next subsection.
The proof of this inequality can be found in \cite{langer2015analysis}. 

\begin{lemma}[{\bf trace inequality}]
\label{lemma:trace-inequality}
Let $q\in\R^+$ and $1\leq i\leq N$. 
For all $v\in V_{h_i}^{(i)}$ and $F_{ij}\subset \partial \Omega_i~(i\neq j)$, there exists a positive constant $C$, which depends on $G_i$ and the quasi-uniform constant $C_{\rm m}$ but not on $h_i$, such that
\begin{eqnarray}
\label{trace-inequality-eq}
\|v\|^q_{L^q(F_{ij})} \leq \frac{C}{h_i} \|v\|^q_{L^q(\Omega_i)}.
\end{eqnarray}
\end{lemma}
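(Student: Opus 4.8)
The plan is to prove the estimate first on the parametric patch $\widehat{\Omega}=[0,1]^d$ and then transfer it to $\Omega_i$ through the map $G_i$, reducing the parametric estimate to a single mesh cell by a scaling argument and summing the resulting cellwise bounds over the cells that touch the face. Because $G_i$ in \eqref{rescale_map} is an affine re-scaling with a diagonal (hence constant) matrix $A_i$, the surface and volume measures transform by fixed powers of the entries of $A_i$, and the parametric mesh size $\hat{h}_i$ is comparable to $h_i$ with ratio fixed by $A_i$. Consequently it is enough to show, writing $\hat{v}=v\circ G_i\in\Splinei$ and $\widehat{F}=G_i^{-1}(F_{ij})$, that
\begin{eqnarray*}
\|\hat{v}\|^q_{L^q(\widehat{F})} \leq \frac{C}{\hat{h}_i}\,\|\hat{v}\|^q_{L^q(\widehat{\Omega})},
\end{eqnarray*}
after which the Jacobian of $G_i$ and the factor $\hat{h}_i/h_i$ are absorbed into a constant depending only on $G_i$.

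First I would establish the cellwise version of this bound. Fix a parametric cell $Q$ whose closure meets $\widehat{F}$, and let $\widehat{F}_Q=\overline{Q}\cap\widehat{F}$ be the corresponding face of $Q$. On $Q$, every $\hat{v}\in\Splinei$ restricts to a tensor-product polynomial of coordinate degrees bounded by $\vp^{(i)}$, so it lies in a fixed finite-dimensional space. On the reference cube $\widehat{Q}_0=[0,1]^d$ all norms on this polynomial space are equivalent, so there is a dimensionless constant $C_0$, depending only on $\vp^{(i)}$ and $q$, with $\|w\|^q_{L^q(\partial\widehat{Q}_0)}\leq C_0\,\|w\|^q_{L^q(\widehat{Q}_0)}$ for every such polynomial $w$. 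Pulling this back through the affine map $\widehat{Q}_0\to Q$, under which the surface measure scales like $h_Q^{d-1}$ and the volume measure like $h_Q^{d}$, gives
\begin{eqnarray*}
\|\hat{v}\|^q_{L^q(\widehat{F}_Q)} \leq \frac{C_0}{h_Q}\,\|\hat{v}\|^q_{L^q(Q)}.
\end{eqnarray*}

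Next I would sum this inequality over the layer of cells meeting $\widehat{F}$. Since $\widehat{F}=\bigcup_Q\widehat{F}_Q$, the left-hand sides add up to $\|\hat{v}\|^q_{L^q(\widehat{F})}$; on the right, quasi-uniformity of the mesh yields $h_Q^{-1}\leq C_{\rm m}\,\hat{h}_i^{-1}$ for each such cell, while the cell integrals $\|\hat{v}\|^q_{L^q(Q)}$ sum to at most $\|\hat{v}\|^q_{L^q(\widehat{\Omega})}$. This proves the parametric inequality, and the change of variables $G_i$ then delivers \eqref{trace-inequality-eq} with a constant depending only on $G_i$ and $C_{\rm m}$.

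The one step requiring care is the uniformity of the cellwise constant $C_0$ across all cells and all mesh sizes. The reference-cube scaling removes any dependence on $h_i$, but for the measure-scaling exponents $h_Q^{d-1}$ and $h_Q^{d}$ to be correct the rectangular cells must be shape-regular, i.e. have uniformly bounded aspect ratios. For the tensor-product meshes used here this is guaranteed by assuming quasi-uniformity in each coordinate direction (equivalently, comparable knot spacings per direction), together with the fixed anisotropy of $A_i$; this is precisely where the dependence on $G_i$ and $C_{\rm m}$ enters. The degree $\vp^{(i)}$ enters only through $C_0$, and every remaining manipulation is the routine bookkeeping of affine measure transformations.
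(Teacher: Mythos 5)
Your proof is correct, but note that the paper never proves Lemma \ref{lemma:trace-inequality} itself: it defers entirely to \cite{langer2015analysis}. Your argument---pull back through the diagonal affine map \eqref{rescale_map}, reduce to a single knot-span cell, invoke equivalence of $L^q$ quasi-norms on the finite-dimensional space of tensor-product polynomials on the reference cube, rescale, and sum over the layer of cells meeting the face---is precisely the standard scaling proof used in that reference, so in substance you have reconstructed the cited proof rather than taken a different route.

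The one place where you go beyond the paper is worth emphasizing, because it is not mere bookkeeping. As you observe, the cellwise scaling actually produces a factor $1/h_{Q,\alpha}$, the cell thickness in the direction normal to the face, not $1/h_Q$ with $h_Q=\mathrm{diam}(Q)$. The paper's quasi-uniformity hypothesis controls diameters only, and for tensor-product meshes this does not exclude thin anisotropic cells along the interface: if the first knot span normal to $F_{ij}$ has width $\varepsilon\ll h_i$ while all other spans are of size $\sim h_i$, every cell still has diameter $\sim h_i$ (so the paper's hypothesis holds with a fixed $C_{\rm m}$), yet the boundary B-spline supported on that thin slab has $\|v\|^q_{L^q(F_{ij})}\sim 1$ while $\|v\|^q_{L^q(\Omega_i)}\sim\varepsilon$, so \eqref{trace-inequality-eq} cannot hold with a constant independent of $\varepsilon$. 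Hence the per-direction quasi-uniformity (comparable knot spans in each coordinate direction) that you make explicit is genuinely needed; it is the hypothesis under which the cited reference operates, and the paper's statement should be read with that strengthened meaning of ``quasi-uniform.''
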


\subsection{DG approximations of source problem}
\label{subsec-bvP}

In this subsection, we will discuss the DG discretization for the source problem and our analysis is related to the framework in \cite{antonietti06, li2019discontinuous}.

Based on the decomposition of the physical domain, we first introduce the following Sobolev space with $\vs=(s_1, \cdots, s_N)$ and $s_i \geq 1$ for $1\leq i\leq N$
\begin{eqnarray*}
\widetilde{H}^{\vs}(\Omega) = 
\left\{ v\in H^1_0(\Omega) : v|_{\Omega_i}\in H^{s_i}(\Omega_i)~ \text{for}~ 1\leq i\leq N\right\}
\end{eqnarray*}
with the equipped norm 
\begin{eqnarray*}
\|v\|_{\widetilde{H}^{\vs}(\Omega)} := \sum_{i=1}^N \|v\|_{H^{s_i}(\Omega_i)}.
\end{eqnarray*}

%
For vector-valued function ${\bf w}$ and scalar-valued function $u$ which are not continuous on the internal surface $F_{ij}$ for some $i\ne j$, we define the jumps by
\begin{eqnarray*}
[{\bf w}]={\bf w}^+ \cdot {\bf n}^+ +{\bf w}^- \cdot {\bf n}^-,\quad\quad [u]=u^+{\bf n}^+
+u^-{\bf n}^-,
\end{eqnarray*}
and the averages by
\begin{eqnarray*}
\{{\bf w}\}=\frac{1}{2}({\bf w}^+ +{\bf w}^-),\quad\quad \{u\}=\frac{1}{2}(u^++u^-),
\end{eqnarray*}
where ${\bf w}^{\pm}$ and $u^{\pm}$ are traces of ${\bf w}$ and $u$ on $F_{ij}$ taken from both sides, ${\bf n}^{\pm}$ are the normal unit vectors.

We represent the discrete approximation space in the whole domain $\Omega$ by
\begin{equation}
V_{\vh} = \{f\in L^2(\Omega): f\big|_{\Omega_i} \in V^{(i)}_{h_i} ~\text{and}~f|_{\mathcal{F}_{\mathcal{E}}}=0\}.
\end{equation}
Then the bilinear form $a^{\rm DG}(\cdot,\cdot): (V_{\vh} \cup H^1_0(\Omega)) \times (V_{\vh} \cup H^1_0(\Omega)) \rightarrow\mathbb{C}$ is given by
\begin{eqnarray}\label{eq-bilinear-DG}
\nonumber
a^{\rm DG}(u,v) &=& \sum_{i=1}^N \int_{\Omega_i}\left(\frac{1}{2}\nabla u\cdot\nabla v+Vuv\right){\rm d}{\vr}-\sum_{F_{ij}\in \facet} \frac{1}{2}\int_{F_{ij}}\{\nabla u\}\cdot[v] \dd s \\[1ex]
&& 
-\sum_{F_{ij}\in \facet} \frac{1}{2}\int_{F_{ij}}\{\nabla v\}\cdot[u] \dd s
+ C_\sigma \sum_{F_{ij}\in \facet} \left(\frac{1}{h_i}+\frac{1}{h_j}\right) \int_{F_{ij}} [u]\cdot[v] \dd s,
\end{eqnarray}
where $C_{\sigma}$ is a sufficiently large discontinuity-penalization parameter  independent of the discretization.
Note that there are many other types of DG formulations (see, e.g., \cite{antonietti06,arnold00}), 
and \eqref{eq-bilinear-DG} is the classical symmetric interior penalty (SIP) method \cite{antonietti06,arnold82,lin12}.

We further define the broken Sobolev space
\begin{align*}
H_{\delta}(\Omega) = \left\{v\in L^2(\Omega):
~v|_{\Omega_i}\in H^1(\Omega_i) ~\text{for}~ 1\leq i\leq N\right\} 
\end{align*}
equipped with the following DG-norm
\begin{eqnarray}\label{eq-norm-DG}
\|u\|^2_{\rm DG} = \sum_{i=1}^N \|u\|^2_{H^1(\Omega_i)} + C_\sigma \sum_{F_{ij}\in\facet} \left(\frac{1}{h_i}+\frac{1}{h_j}\right) \left\|[u] \right\|^2_{L^2(F_{ij})}.
\end{eqnarray}
We then provide the following two lemmas to demonstrate the boundedness and coerciveness of the bilinear form with respect to the DG-norm, whose proofs are given in \ref{append-boundness} and \ref{append-coercive}, respectively.

\begin{lemma}[{\bf boundness}]
\label{lem:boundness}
There exists a positive constant $C$ such that
\begin{eqnarray}\label{boundness}
	a^{\rm DG}(u, v) \leq C  \|u\|_{\rm DG}  \|v\|_{\rm DG}
	\qquad \forall~u, v\in V_{\vh}.
\end{eqnarray}
\end{lemma}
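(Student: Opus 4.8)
The plan is to bound $a^{\rm DG}(u,v)$ by treating its four groups of terms separately and showing that each is controlled by $\|u\|_{\rm DG}\|v\|_{\rm DG}$, where the generic constant $C$ is allowed to depend on the fixed data $C_\sigma$, $\|V\|_{L^2(\Omega)}$, the maps $G_i$, and the quasi-uniformity constant $C_{\rm m}$, but not on $\vh$. Throughout I would apply the Cauchy--Schwarz inequality on each patch and on each face, followed by the discrete Cauchy--Schwarz inequality over the finite index sets $\{1,\dots,N\}$ and $\facet$.

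For the volume contribution $\sum_{i=1}^N \int_{\Omega_i}\bigl(\tfrac12\nabla u\cdot\nabla v+Vuv\bigr)\dd\vr$, the gradient part is handled directly: Cauchy--Schwarz on each $\Omega_i$ and then over $i$ gives $\tfrac12\sum_i\|\nabla u\|_{L^2(\Omega_i)}\|\nabla v\|_{L^2(\Omega_i)}\le\tfrac12\bigl(\sum_i\|u\|^2_{H^1(\Omega_i)}\bigr)^{1/2}\bigl(\sum_i\|v\|^2_{H^1(\Omega_i)}\bigr)^{1/2}\le\tfrac12\|u\|_{\rm DG}\|v\|_{\rm DG}$. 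For the potential part I would use $V\in L^2(\Omega)$ together with the Sobolev embedding $H^1(\Omega_i)\hookrightarrow L^4(\Omega_i)$, valid for $d\le 3$: Hölder's inequality yields $\int_{\Omega_i}Vuv\le\|V\|_{L^2(\Omega_i)}\|u\|_{L^4(\Omega_i)}\|v\|_{L^4(\Omega_i)}\le C\|V\|_{L^2(\Omega_i)}\|u\|_{H^1(\Omega_i)}\|v\|_{H^1(\Omega_i)}$, and summing over $i$ produces the bound $C\|V\|_{L^2(\Omega)}\|u\|_{\rm DG}\|v\|_{\rm DG}$.

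The consistency (flux) terms $\sum_{F_{ij}\in\facet}\tfrac12\int_{F_{ij}}\{\nabla u\}\cdot[v]\,\dd s$ (and its symmetric counterpart) form the crux of the argument. After Cauchy--Schwarz on $F_{ij}$, the difficulty is to control $\|\{\nabla u\}\|_{L^2(F_{ij})}$ by volume quantities while reproducing exactly the penalty weight $(1/h_i+1/h_j)^{1/2}$ occurring in $\|\cdot\|_{\rm DG}$. Here I would invoke the trace inequality of Lemma \ref{lemma:trace-inequality} applied to the components of $\nabla u$, which are splines of one lower degree on the same mesh so that the same scaling argument applies; this gives $\|\nabla u^{+}\|_{L^2(F_{ij})}\le C h_i^{-1/2}\|\nabla u\|_{L^2(\Omega_i)}$ and similarly with $j$ for the trace from $\Omega_j$. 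Since $h_i^{-1/2}\le(1/h_i+1/h_j)^{1/2}$, the face integral is bounded by $C\bigl(1/h_i+1/h_j\bigr)^{1/2}\bigl(\|\nabla u\|_{L^2(\Omega_i)}+\|\nabla u\|_{L^2(\Omega_j)}\bigr)\|[v]\|_{L^2(F_{ij})}$. The factor $\bigl(1/h_i+1/h_j\bigr)^{1/2}\|[v]\|_{L^2(F_{ij})}$ is, up to $C_\sigma^{-1/2}$, precisely a summand of the penalty part of $\|v\|_{\rm DG}$, while the gradient factors feed the $H^1$ part of $\|u\|_{\rm DG}$; summing over faces and applying discrete Cauchy--Schwarz then closes the estimate. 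A mild point to verify is that each patch borders only a bounded number of interior faces, so that the volume norms $\|\nabla u\|_{L^2(\Omega_i)}$ are reused only boundedly often when the face sum is reorganized into a patch sum.

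Finally, the penalty term $C_\sigma\sum_{F_{ij}\in\facet}(1/h_i+1/h_j)\int_{F_{ij}}[u]\cdot[v]\,\dd s$ is immediate: Cauchy--Schwarz on each face followed by discrete Cauchy--Schwarz over $\facet$ bounds it by $\bigl(C_\sigma\sum(1/h_i+1/h_j)\|[u]\|^2_{L^2(F_{ij})}\bigr)^{1/2}\bigl(C_\sigma\sum(1/h_i+1/h_j)\|[v]\|^2_{L^2(F_{ij})}\bigr)^{1/2}$, which is exactly the product of the penalty parts of $\|u\|_{\rm DG}$ and $\|v\|_{\rm DG}$. Combining the four estimates gives \eqref{boundness}. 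I expect the flux terms to be the only genuine obstacle: specifically the bookkeeping that converts the one-sided trace bounds into the symmetric weight $(1/h_i+1/h_j)^{1/2}$, and the justification that Lemma \ref{lemma:trace-inequality} may be applied to $\nabla u$ rather than to $u$ itself.
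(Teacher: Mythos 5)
Your proposal is correct, but for the flux terms it takes a genuinely different route from the paper's proof. The paper bounds $I_2=\frac12\sum_{F_{ij}\in\facet}\int_{F_{ij}}\big(\{\nabla u\}\cdot[v]+\{\nabla v\}\cdot[u]\big)\,\dd s$ by a duality pairing on each face: jumps are measured in $H^{1/2}(F_{ij})$ and averaged gradients in $H^{-1/2}(F_{ij})$, and both are then controlled by the patchwise $H^1$ norms via continuous trace estimates, so no mesh-dependent factor ever appears and the penalty part of the DG norm is not even used for this term. You instead run the classical SIPG discrete argument: Cauchy--Schwarz in $L^2(F_{ij})$, the discrete trace inequality of Lemma \ref{lemma:trace-inequality} applied to $\nabla u$ (legitimate, as you note, since partial derivatives of splines are splines of one lower degree on the same mesh, so the scaling argument of \cite{langer2015analysis} applies verbatim), and absorption of the resulting $h_i^{-1/2}$ into the penalty weight $(1/h_i+1/h_j)^{1/2}$ carried by the other argument's DG norm. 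Both routes are sound for $u,v\in V_{\vh}$, and your volume-term treatment (H\"older with $V\in L^2$ plus the embedding $H^1(\Omega_i)\hookrightarrow L^4(\Omega_i)$ for $d\le 3$) and penalty-term treatment coincide with the paper's. What each buys: your argument makes explicit why $(1/h_i+1/h_j)$ is the natural penalty weight and tracks the $C_\sigma^{-1/2}$ dependence of the flux bound, but it is intrinsically discrete --- it fails on $H^1_0(\Omega)$ because the inverse-type trace inequality is unavailable there; the paper's duality route is mesh-independent and in principle extends to sufficiently regular non-discrete functions (the kind of extension implicitly needed when the bilinear form is later applied to $Tf$), at the price that the step $\|\{\nabla v\}\|_{H^{-1/2}(F_{ij})}\le C\big(\|v\|_{H^1(\Omega_i)}+\|v\|_{H^1(\Omega_j)}\big)$ requires justification beyond plain $H^1$ regularity (e.g.\ via normal-trace theory or the smoothness of splines). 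Your two flagged ``obstacles'' --- applying the trace inequality to $\nabla u$ and the face-to-patch bookkeeping with a bounded number of faces per patch --- are exactly the right points to check, and both go through.
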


\begin{lemma}[{\bf coerciveness}]
\label{lem:coercive}
If $C_{\sigma}$ is sufficiently large, then there exist positive constants $\alpha$ and $\beta$ such that
\begin{eqnarray}\label{coercive}
a^{\rm DG}(u,u)\geq \alpha\|u\|^2_{\rm DG}-\beta\|u\|^2_{L^2(\Omega)}
\qquad \forall~u\in V_{\vh}.
\end{eqnarray}
\end{lemma}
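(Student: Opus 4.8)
The plan is to establish a G\r{a}rding-type inequality, which is the standard mechanism for proving coercivity up to an $L^2$-perturbation for interior penalty DG discretizations of indefinite problems. The difficulty arising here is that the potential $V\in L^2(\Omega)$ may be sign-indefinite and singular, so the term $\sum_{i}\int_{\Omega_i}Vu^2\,\dd\vr$ cannot be dropped or bounded below by zero; it will instead be absorbed into the $-\beta\|u\|^2_{L^2(\Omega)}$ term on the right-hand side. First I would write out $a^{\rm DG}(u,u)$ from \eqref{eq-bilinear-DG} with $v=u$, which produces four contributions: the volume term $\frac12\sum_i\|\nabla u\|^2_{L^2(\Omega_i)}$, the potential term $\sum_i\int_{\Omega_i}Vu^2$, the two (now identical) consistency terms $-\sum_{F_{ij}}\int_{F_{ij}}\{\nabla u\}\cdot[u]\,\dd s$, and the nonnegative penalty term $C_\sigma\sum_{F_{ij}}(h_i^{-1}+h_j^{-1})\|[u]\|^2_{L^2(F_{ij})}$.

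The main obstacle is controlling the consistency (flux) term, which carries an indefinite sign and involves the trace of $\nabla u$ on the skeleton. The key step is to bound it using Cauchy--Schwarz on each face followed by Young's inequality with a small parameter $\varepsilon$:
\begin{eqnarray*}
\left|\sum_{F_{ij}\in\facet}\int_{F_{ij}}\{\nabla u\}\cdot[u]\,\dd s\right|
\leq \varepsilon\sum_{F_{ij}\in\facet}\left(\frac{1}{h_i}+\frac{1}{h_j}\right)^{-1}\|\{\nabla u\}\|^2_{L^2(F_{ij})}
+\frac{1}{4\varepsilon}\sum_{F_{ij}\in\facet}\left(\frac{1}{h_i}+\frac{1}{h_j}\right)\|[u]\|^2_{L^2(F_{ij})}.
\end{eqnarray*}
The second sum here has exactly the structure of the penalty term and is absorbed by choosing $C_\sigma$ large. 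To handle the first sum I would invoke the trace inequality Lemma \ref{lemma:trace-inequality} with $q=2$, applied to each patch-restricted gradient component (which lies in the spline space), giving $\|\{\nabla u\}\|^2_{L^2(F_{ij})}\leq \tfrac{C}{2}(h_i^{-1}\|\nabla u\|^2_{L^2(\Omega_i)}+h_j^{-1}\|\nabla u\|^2_{L^2(\Omega_j)})$ after averaging; the weight $(h_i^{-1}+h_j^{-1})^{-1}$ then cancels the inverse mesh factors so that this contribution is bounded by $C\varepsilon\sum_i\|\nabla u\|^2_{L^2(\Omega_i)}$, an acceptable fraction of the volume term.

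Combining these estimates gives $a^{\rm DG}(u,u)\geq(\tfrac12-C\varepsilon)\sum_i\|\nabla u\|^2_{L^2(\Omega_i)}+(C_\sigma-\tfrac{1}{4\varepsilon})\sum_{F_{ij}}(h_i^{-1}+h_j^{-1})\|[u]\|^2_{L^2(F_{ij})}+\sum_i\int_{\Omega_i}Vu^2\,\dd\vr$. I would fix $\varepsilon$ small enough that $\tfrac12-C\varepsilon>0$, then take $C_\sigma>\tfrac{1}{4\varepsilon}$ so the penalty coefficient stays positive. The remaining gap is that the DG-norm \eqref{eq-norm-DG} uses the full $H^1(\Omega_i)$ norms rather than only the seminorms; I would recover the missing $\sum_i\|u\|^2_{L^2(\Omega_i)}=\|u\|^2_{L^2(\Omega)}$ contribution by adding and subtracting it, and finally bound the indefinite potential term via $|\int_{\Omega_i}Vu^2|$. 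Here I expect to use H\"older's inequality together with a Sobolev embedding (or an $L^2$--$L^\infty$ interpolation absorbing a controllable fraction of the gradient energy) to write $\big|\sum_i\int_{\Omega_i}Vu^2\big|\leq \eta\sum_i\|\nabla u\|^2_{L^2(\Omega_i)}+C(\eta)\|u\|^2_{L^2(\Omega)}$ for arbitrarily small $\eta$; choosing $\eta$ small completes the absorption. Collecting terms yields $a^{\rm DG}(u,u)\geq\alpha\|u\|^2_{\rm DG}-\beta\|u\|^2_{L^2(\Omega)}$ with $\alpha>0$ and $\beta>0$ depending on $V$, $C_\sigma$, and the embedding constants, which is the desired inequality \eqref{coercive}.
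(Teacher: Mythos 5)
Your proposal is correct and follows essentially the same route as the paper's own proof: you control the consistency term via the trace inequality (Lemma \ref{lemma:trace-inequality}) combined with a weighted Young inequality, absorbing the jump part into the penalty term for $C_\sigma$ sufficiently large and the gradient part into the volume term, and you handle the indefinite potential $V\in L^2(\Omega)$ by H\"older's inequality, the Sobolev embedding, and Young's inequality, arriving at the same G\aa rding-type bound (the paper's estimates \eqref{coercive-veff} and \eqref{coercive-jump}). The only differences are cosmetic --- you apply Young's inequality before the trace inequality where the paper applies it after, and your potential estimate is sketched rather than written out --- so no changes are needed.
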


For simplicity, we can take $\beta=0$. Note that $a^{\rm DG}_{\beta}(u,v) =a^{\rm DG}(u,v)+\beta(u,v)$ makes this true for $\beta>0$.

Define the solution operator
\begin{eqnarray*}
T:L^2(\Omega)\rightarrow H_0^1(\Omega) \qquad \text{such that} \quad a(Tf,v)=(f,v)\quad\forall~v\in H_0^1(\Omega),
\end{eqnarray*}
and the approximation operator
\begin{eqnarray*}
T^{\rm DG}:L^2(\Omega)\rightarrow V_\vh
\qquad \text{such that} \quad a^{\rm DG}(T^{\rm DG}f,v)=(f,v)\quad\forall~v\in V_\vh.
\end{eqnarray*}
The following theorem provides the error estimate in the DG-norm for the approximation of the solution operator, whose proof is provided in \ref{append-T-approximation}.

\begin{theorem}[{\bf error estimate in DG-norm}]
\label{thm:T-approximate}
Let $C_{\sigma}$ be sufficiently large and $\vs=(s_1,\cdots,s_N)$. 
If $Tf\in \widetilde{H}^{\vs}(\Omega)$
for $f\in L^2(\Omega)$,
then there exists a positive constant $C$ such that
\begin{eqnarray}
\label{rate-T-tildeT}
\|(T-T^{\rm DG})f\|_{\rm DG} \leq C \sum_{i=1}^N h_i^{t_i-1} \|Tf\|_{H^{t_i}(\Omega_i)}
\end{eqnarray}
with $t_i=\min\{p_i+1,s_i\}$.
\end{theorem}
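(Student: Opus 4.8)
The plan is to follow the standard Strang-type / C\'ea-type argument for symmetric interior penalty DG methods, adapted to the multi-patch isogeometric setting. The key is to relate the approximation error $(T-T^{\rm DG})f$ to a best-approximation error measured in the DG-norm, and then to invoke the projection estimate of Lemma \ref{lemma:multi-patch-projection-err} patch by patch. First I would introduce the elementwise interpolant $w = \Pi f \in V_{\vh}$ defined by $(\Pi f)|_{\Omega_i} = \Pi_i (Tf)$, and split the error as $Tf - T^{\rm DG}f = (Tf - w) + (w - T^{\rm DG}f)$, where the first term is controlled directly and the second, call it $\chi = w - T^{\rm DG}f \in V_{\vh}$, is a discrete function that I would bound using coerciveness (Lemma \ref{lem:coercive} with $\beta = 0$).

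Second, I would estimate $\|\chi\|_{\rm DG}$. Using coerciveness, $\alpha\|\chi\|_{\rm DG}^2 \leq a^{\rm DG}(\chi,\chi) = a^{\rm DG}(w - T^{\rm DG}f,\chi)$. Here the crucial ingredient is \emph{Galerkin consistency}: since $Tf$ solves the continuous problem and the SIP bilinear form is consistent for the exact solution (the jumps $[Tf]$ vanish across interior faces because $Tf\in H^1_0(\Omega)$, and the normal-flux terms match), one has $a^{\rm DG}(Tf,\chi) = (f,\chi)$ for all $\chi\in V_{\vh}$, which equals $a^{\rm DG}(T^{\rm DG}f,\chi)$ by definition of $T^{\rm DG}$. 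This gives $a^{\rm DG}(T^{\rm DG}f,\chi) = a^{\rm DG}(Tf,\chi)$, so $a^{\rm DG}(w - T^{\rm DG}f,\chi) = a^{\rm DG}(w - Tf,\chi)$. Therefore $\alpha\|\chi\|_{\rm DG}^2 \leq a^{\rm DG}(w-Tf,\chi)$, and it remains to bound this last quantity.

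Third, I would bound $a^{\rm DG}(w - Tf, \chi)$ by $C\|w-Tf\|_{*}\|\chi\|_{\rm DG}$, where $\|\cdot\|_*$ is a mesh-dependent norm slightly stronger than the DG-norm (it must include the $L^2$-norms of normal derivative traces across faces, since $w - Tf$ is not in $V_{\vh}$ and the boundedness Lemma \ref{lem:boundness} applies only to discrete functions). Estimating each term of $a^{\rm DG}$ separately — the volume gradient and potential terms, the two consistency (flux) terms, and the penalty term — I would control the face contributions using the trace inequality (Lemma \ref{lemma:trace-inequality}) together with a trace/interpolation estimate for $Tf - \Pi_i(Tf)$. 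Combining with the volume estimates from Lemma \ref{lemma:multi-patch-projection-err} yields, patch by patch, a bound of the form $\|w - Tf\|_* \leq C\sum_i h_i^{t_i-1}\|Tf\|_{H^{t_i}(\Omega_i)}$ with $t_i = \min\{p_i+1, s_i\}$. Dividing through by $\|\chi\|_{\rm DG}$ gives the same bound for $\|\chi\|_{\rm DG}$, and the triangle inequality combined with the direct estimate of $\|Tf - w\|_{\rm DG}$ (again from Lemma \ref{lemma:multi-patch-projection-err}) finishes the proof.

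The main obstacle I expect is the careful treatment of the face terms involving $\nabla(Tf - \Pi_i Tf)$: boundedness as stated holds only on $V_{\vh}$, so the normal-derivative traces of the non-discrete remainder $Tf - \Pi_i Tf$ must be estimated directly, which requires a trace estimate giving $\|\nabla(Tf-\Pi_i Tf)\|_{L^2(F_{ij})}\leq C h_i^{t_i - 3/2}\|Tf\|_{H^{t_i}(\Omega_i)}$ (with the correct power of $h_i$ so that, after multiplying by the penalty scaling $h_i^{-1/2}$ from the trace inequality on the discrete factor, the face contributions match the volume rate $h_i^{t_i-1}$). Getting these scaling exponents consistent across the flux and penalty terms — and verifying that the requirement $s_i\geq 1$ together with $t_i\geq 3/2$ is enough regularity to give the face traces meaning — is the delicate bookkeeping step.
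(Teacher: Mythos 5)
Your proposal follows essentially the same route as the paper's proof: the identical splitting $Tf-T^{\rm DG}f=\eta+\xi$ with $\eta$ the patchwise projection error and $\xi\in V_{\vh}$, the same Galerkin orthogonality $a^{\rm DG}(Tf-T^{\rm DG}f,\xi)=0$, coercivity (Lemma \ref{lem:coercive}) applied to the discrete part $\xi$, continuity to reduce to $\eta$, and Lemma \ref{lemma:multi-patch-projection-err} patch by patch followed by the triangle inequality. The one place you diverge is a point in your favor: you bound $a^{\rm DG}(\eta,\xi)$ through an augmented norm $\|\cdot\|_*$ with explicit trace estimates for $\nabla\eta$ on the faces, whereas the paper simply invokes the boundedness Lemma \ref{lem:boundness} (stated only for arguments in $V_{\vh}$) on the non-discrete function $\eta$, together with a separate $L^2$ estimate of $[\eta]$ on each $F_{ij}$.
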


Note that the regularity estimate 
\begin{eqnarray}\label{regularity-estimate-eq}
	\|Tf\|_{H^2(\Omega)} \leq C \|f\|_{L^2(\Omega)}
\end{eqnarray}
holds for all $f\in L^2(\Omega)$ \cite{evans2022partial}. 
We can apply the standard duality techniques to bound the error in $L^2$-norm, which is stated in the following theorem.
We point out that our DG approximation can be viewed as a special variant of non-conforming scheme.
The main idea behind the $L^2$-norm estimate follows similar techniques used in non-conforming finite elements (see, e.g., \cite{brenner08, sun2016finite}), and we provide the proof in \ref{append-error-L2} for the sake of completeness.

\begin{theorem}[{\bf error estimate in $L^2$-norm}]
\label{thm:error_L2}
Under the conditions in Theorem \ref{thm:T-approximate}, there exists a positive constant $C$ such that
\begin{eqnarray}
	\|(T-T^{\rm DG})f\|_{L^2(\Omega)} \leq C h_{\rm max} \|(T-T^{\rm DG})f\|_{\rm DG}.
\end{eqnarray}
\end{theorem}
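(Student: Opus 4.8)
The plan is to run an Aubin--Nitsche duality argument adapted to the nonconforming/DG setting. Write $e := (T-T^{\rm DG})f$ for the error, and observe that $e\in H_{\delta}(\Omega)$ is in fact patchwise $H^2$ (since $Tf\in H^2(\Omega)$ by the regularity estimate \eqref{regularity-estimate-eq} and $T^{\rm DG}f$ is a piecewise spline) and that $e=0$ on $\mathcal{F}_{\mathcal{E}}$. I introduce the dual solution $\phi := Te \in H^1_0(\Omega)$; by \eqref{regularity-estimate-eq} it satisfies $\phi\in H^2(\Omega)$ with $\|\phi\|_{H^2(\Omega)}\leq C\|e\|_{L^2(\Omega)}$.

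First I would establish the consistency identity
\[
a^{\rm DG}(\phi, v) = (e, v) \qquad \forall\, v\in V_{\vh}\cup H^1_0(\Omega),
\]
together with $a^{\rm DG}(Tf, v) = (f, v)$ for all $v\in V_{\vh}$. Both follow by integrating $a^{\rm DG}(\phi,\cdot)$ by parts patchwise: because $\phi\in H^2(\Omega)\cap H^1_0(\Omega)$, its jump $[\phi]$ vanishes on every $F_{ij}\in\facet$ (killing the penalty term and the $\{\nabla v\}\cdot[\phi]$ term) and its normal trace $\{\nabla\phi\}=\nabla\phi$ is single-valued, so the interface terms collapse against $-\tfrac12\Delta\phi+V\phi = e$. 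The consistency identity for $Tf$, combined with the definition $a^{\rm DG}(T^{\rm DG}f, v)=(f,v)$, yields the Galerkin orthogonality $a^{\rm DG}(e, v)=0$ for all $v\in V_{\vh}$.

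Next, testing the consistency identity with $v=e$ gives $\|e\|^2_{L^2(\Omega)} = a^{\rm DG}(\phi, e)$. Let $\phi_I := (\Pi_i\phi)_{i}\in V_{\vh}$ be the patchwise projection of $\phi$. Using the symmetry of the SIP form $a^{\rm DG}$ together with Galerkin orthogonality, $a^{\rm DG}(\phi_I, e)=a^{\rm DG}(e,\phi_I)=0$, so
\[
\|e\|^2_{L^2(\Omega)} = a^{\rm DG}(\phi-\phi_I, e).
\]
I then bound the right-hand side by the (extended) boundedness of $a^{\rm DG}$, obtaining $\|e\|^2_{L^2(\Omega)}\leq C\|\phi-\phi_I\|_{\rm DG}\|e\|_{\rm DG}$, and I control $\|\phi-\phi_I\|_{\rm DG}$ through the projection error. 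For the volume part I invoke Lemma \ref{lemma:multi-patch-projection-err} with $s_i=2$, so that $t_i=\min\{p_i+1,2\}=2$ and $\sum_i\|\phi-\Pi_i\phi\|_{H^1(\Omega_i)}\leq C h_{\rm max}\|\phi\|_{H^2(\Omega)}$; for the interface jump part I combine a multiplicative trace inequality with the $L^2$- and $H^1$-projection bounds to get $\tfrac1{h_i}\|[\phi-\phi_I]\|^2_{L^2(F_{ij})}\leq C h_i^2\|\phi\|^2_{H^2(\Omega)}$. Hence $\|\phi-\phi_I\|_{\rm DG}\leq C h_{\rm max}\|\phi\|_{H^2(\Omega)}$. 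Substituting this, using the regularity bound $\|\phi\|_{H^2(\Omega)}\leq C\|e\|_{L^2(\Omega)}$, and dividing by $\|e\|_{L^2(\Omega)}$ yields the claim.

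The main obstacle is the consistency step: this is where the $H^2$-regularity of the dual solution is essential, since the argument requires $[\phi]=0$ and a single-valued normal trace of $\nabla\phi$ so that the interface penalty and flux terms in $a^{\rm DG}(\phi,\cdot)$ cancel exactly after patchwise integration by parts. A secondary technical point is that boundedness (Lemma \ref{lem:boundness}) is stated only on $V_{\vh}$, whereas the term $a^{\rm DG}(\phi-\phi_I, e)$ involves the nondiscrete factor $\phi-\phi_I$; one must extend the estimate to the broken space $H_\delta(\Omega)$, which calls for a trace estimate on $\{\nabla(\phi-\phi_I)\}$ along the interfaces rather than the discrete trace inequality of Lemma \ref{lemma:trace-inequality}.
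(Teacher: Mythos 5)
Your proof is correct, and it follows the same Aubin--Nitsche duality framework as the paper: both introduce a dual problem whose solution enjoys the $H^2$ regularity estimate \eqref{regularity-estimate-eq}, both use Galerkin orthogonality to reduce $\|e\|^2_{L^2(\Omega)}$ to a bilinear-form pairing of the primal error with a dual-approximation term of size $\mathcal{O}(h_{\rm max})$, and both finish with boundedness of $a^{\rm DG}$. The implementations differ in one genuine respect: the paper solves the dual problem both continuously ($\varphi$) and discretely ($\varphi^{\rm DG}\in V_{\vh}$), writes $(g,w-w^{\rm DG})=a^{\rm DG}(w-w^{\rm DG},\varphi-\varphi^{\rm DG})$ using \emph{two} Galerkin orthogonalities (primal and dual, the latter via symmetry), and then bounds $\|\varphi-\varphi^{\rm DG}\|_{\rm DG}\leq C h_{\rm max}\|\varphi\|_{H^2(\Omega)}$ by invoking Theorem \ref{thm:T-approximate} as a black box for the dual problem; you never introduce a discrete dual solution, instead subtracting the projection $\phi_I$ of the dual solution and estimating $\|\phi-\phi_I\|_{\rm DG}\leq C h_{\rm max}\|\phi\|_{H^2(\Omega)}$ directly from Lemma \ref{lemma:multi-patch-projection-err} together with a multiplicative trace inequality for the penalty terms. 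Your route needs only the primal orthogonality (plus symmetry) and is more self-contained, at the cost of redoing an approximation estimate that Theorem \ref{thm:T-approximate} already supplies; the paper's route is shorter precisely because it recycles that theorem. Fixing the dual datum $g=e$ rather than taking the Riesz supremum over $g\in L^2(\Omega)$ is a cosmetic difference. Finally, a point in your favor: both arguments apply the boundedness estimate \eqref{boundness} to functions that are not in $V_{\vh}$ (the paper to $a^{\rm DG}(w-w^{\rm DG},\varphi-\varphi^{\rm DG})$, you to $a^{\rm DG}(\phi-\phi_I,e)$), which strictly speaking requires extending Lemma \ref{lem:boundness} to piecewise-$H^2$ functions via a non-discrete trace estimate on the averaged gradients; you flag this gap explicitly, whereas the paper's proof passes over it in silence.
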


\subsection{DG approximations of eigenvalue problem}
\label{subsec-eigen}

We construct DG methods for the linear eigenvalue problem \eqref{model-eq}: Find $\lambda^{\rm DG}
\in\mathbb{R}$ and $u^{\rm DG}\in V_{\vh}$, such that 
$\|u^{\rm DG}\|_{L^2(\Omega)} =1$ and
\begin{eqnarray}\label{eq-eigen-DG}
a^{\rm DG}(u^{\rm DG},v)=\lambda^{\rm DG}(u^{\rm DG},v)
\qquad\forall~v\in V_{\vh}.
\end{eqnarray}
Note that \eqref{model-eq} and \eqref{eq-eigen-DG} are equivalent to $\lambda Tu=u$
and $\lambda^{\rm DG}T^{\rm DG}u^{\rm DG}=u^{\rm DG}$, respectively.
For a nonzero eigenvalue $\lambda$ of \eqref{model-eq}, we denote by $M(\lambda)$ the eigenspace spanned by the corresponding eigenfunctions.

According to Theorem \ref{thm:T-approximate} and regularity estimate \eqref{regularity-estimate-eq}, we can obtain the convergence of $\{T^{\dg}\}: L^2(\Omega) \rightarrow L^2(\Omega)$, i.e. $\|T-T^{\dg}\|_{\mathscr{L}(L^2(\Omega))} \rightarrow 0$ as $h_{\rm max}\rightarrow 0$,
which can lead to the convergence of eigenpair \cite{osborn75, yang2008order,yang2010eigenvalue}.
The following theorem relates the error of the eigenvalue problem to that of the corresponding source problem, whose proof is given in \ref{append-eigen-source-bound}.

\begin{theorem}[{\bf convergence of eigenvalue problem}]
\label{thm:eigen-source-bound}
Let nonzero $\lambda_k$ be the $k$-th eigenvalue of \eqref{model-eq} and $(\lambda_k^{\rm DG}, u_k^{\rm DG})$ be the $k$-th eigenpair of \eqref{eq-eigen-DG}. 
Then $\lambda_k\rightarrow \lambda^{\dg}_k$ as $h_{\rm max}\rightarrow 0$ and there exists $u_k\in M(\lambda_k)$ with $\|u_k\|_{L^2(\Omega)}=1$ such that 
\begin{align}
\label{source-eigenvalue-a}
\| u_k^{\dg}-u_k \|_{L^2(\Omega)} &\leq C \| (T-T^{\dg})u_k \|_{L^2(\Omega)},  \\[1ex]\label{source-eigenvalue-b}
|\lambda^{\dg}_k - \lambda_k| & \leq C \|(T-T^{\dg})u_k\|_{\dg} \inf_{v\in V_{\vh}} \|u_k-v\|_{\dg} + R_1, \\[1ex]\label{source-eigenvalue-c}
\| u_k^{\dg}-u_k \|_{\dg} &= |\lambda_k| \cdot \| (T-T^{\dg})u_k \|_{\dg} + R_2,
\end{align}
where $\displaystyle |R_1| \leq C \| (T-T^{\dg})u_k \|^2_{L^2(\Omega)}$ and  $\displaystyle |R_2| \leq C \| (T-T^{\dg})u_k \|_{L^2(\Omega)}$.
\end{theorem}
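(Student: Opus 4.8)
The plan is to place the three estimates inside the abstract spectral approximation theory of Osborn \cite{osborn75,babuska73}, viewing $T$ and $T^{\dg}$ as operators on $L^2(\Omega)$. First I would check the structural hypotheses of that theory. Self-adjointness of $T$ is immediate from the symmetry of $a(\cdot,\cdot)$; for $T^{\dg}$, testing the defining relation $a^{\dg}(T^{\dg}f,v)=(f,v)$ with $v=T^{\dg}g$ and using the symmetry of $a^{\dg}$ on $V_{\vh}\times V_{\vh}$ yields $(T^{\dg}f,g)=(f,T^{\dg}g)$ for all $f,g\in L^2(\Omega)$, so $T^{\dg}$ is self-adjoint (its well-posedness follows from Lemma \ref{lem:boundness} and Lemma \ref{lem:coercive} via Lax--Milgram on $V_{\vh}$). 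Compactness of $T$ follows from the regularity estimate \eqref{regularity-estimate-eq} together with the Rellich--Kondrachov embedding $H^2(\Omega)\hookrightarrow\hookrightarrow L^2(\Omega)$, while $T^{\dg}$ is compact as a finite-rank operator. Finally, combining Theorem \ref{thm:T-approximate} and Theorem \ref{thm:error_L2} with \eqref{regularity-estimate-eq} gives $\|T-T^{\dg}\|_{\mathscr{L}(L^2(\Omega))}\to 0$ as $h_{\rm max}\to 0$, which is exactly the norm convergence that guarantees the discrete eigenvalues $1/\lambda_k^{\dg}$ cluster around the eigenvalue $1/\lambda_k$ of $T$ and that the associated spectral projections converge.

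For the $L^2$ eigenfunction estimate \eqref{source-eigenvalue-a}, I would introduce the Riesz spectral projections $E$ and $E^{\dg}$ of $T$ and $T^{\dg}$ defined by contour integrals of the resolvents around $1/\lambda_k$. Osborn's gap lemma gives $\|(E-E^{\dg})|_{M(\lambda_k)}\|_{\mathscr{L}(L^2(\Omega))}\le C\,\|(T-T^{\dg})|_{M(\lambda_k)}\|_{\mathscr{L}(L^2(\Omega))}$. Since $u_k^{\dg}$ lies in the range of $E^{\dg}$, I would take $u_k:=E u_k^{\dg}/\|E u_k^{\dg}\|_{L^2(\Omega)}\in M(\lambda_k)$ as the best-matching normalized eigenfunction; tracking the normalization, the gap estimate specializes to $\|u_k^{\dg}-u_k\|_{L^2(\Omega)}\le C\,\|(T-T^{\dg})u_k\|_{L^2(\Omega)}$, which is \eqref{source-eigenvalue-a}.

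For the eigenvalue estimate \eqref{source-eigenvalue-b}, I would begin from Osborn's identity for the reciprocal eigenvalues, $1/\lambda_k-1/\lambda_k^{\dg}=((T-T^{\dg})u_k,u_k)+r$ with $|r|\le C\,\|(T-T^{\dg})u_k\|_{L^2(\Omega)}^2$, and then sharpen the leading inner product. Writing $g:=(T-T^{\dg})u_k$ and using consistency of the SIP form (so that $a^{\dg}(Tu_k,w)=(u_k,w)$ for $w\in V_{\vh}\cup H_0^1(\Omega)$, legitimate because $u_k$ is continuous across the interfaces and hence $[u_k]=0$) together with the resulting Galerkin orthogonality $a^{\dg}(g,v)=0$ for all $v\in V_{\vh}$ and the symmetry of $a^{\dg}$, one obtains the identity $(g,u_k)=a^{\dg}(g,\,Tu_k-v)$ for every $v\in V_{\vh}$. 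The boundedness Lemma \ref{lem:boundness} then yields $|(g,u_k)|\le C\,\|g\|_{\dg}\inf_{v\in V_{\vh}}\|Tu_k-v\|_{\dg}$, and since $Tu_k=u_k/\lambda_k$ we have $\inf_{v\in V_{\vh}}\|Tu_k-v\|_{\dg}=|\lambda_k|^{-1}\inf_{v\in V_{\vh}}\|u_k-v\|_{\dg}$. Converting via $\lambda_k^{\dg}-\lambda_k=\lambda_k\lambda_k^{\dg}(1/\lambda_k-1/\lambda_k^{\dg})$ and absorbing the bounded factor $|\lambda_k\lambda_k^{\dg}|/|\lambda_k|$ (recall $\lambda_k^{\dg}\to\lambda_k$) into $C$ gives \eqref{source-eigenvalue-b} with $R_1=\lambda_k\lambda_k^{\dg}\,r$.

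Finally, for the DG-norm eigenfunction estimate \eqref{source-eigenvalue-c} I would expand the difference using the two eigenrelations $u_k^{\dg}=\lambda_k^{\dg}T^{\dg}u_k^{\dg}$ and $u_k=\lambda_k Tu_k$; adding and subtracting $\lambda_k T^{\dg}u_k$ isolates the leading term $\lambda_k(T^{\dg}-T)u_k$, whose DG-norm is precisely $|\lambda_k|\,\|(T-T^{\dg})u_k\|_{\dg}$, and collects the corrections $\lambda_k^{\dg}T^{\dg}(u_k^{\dg}-u_k)$ and $(\lambda_k^{\dg}-\lambda_k)T^{\dg}u_k$ into a remainder. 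Defining $R_2$ as the difference of the two sides of \eqref{source-eigenvalue-c} and invoking the reverse triangle inequality, it then suffices to bound these corrections in the DG-norm by $C\,\|(T-T^{\dg})u_k\|_{L^2(\Omega)}$. This last bookkeeping is the main obstacle: it requires a uniform bound on $T^{\dg}$ as a map $L^2(\Omega)\to(V_{\vh},\|\cdot\|_{\dg})$, the $L^2$-to-DG gain of a factor $h_{\rm max}$ from Theorem \ref{thm:error_L2}, the estimate \eqref{source-eigenvalue-a}, and a bootstrap through \eqref{source-eigenvalue-b} to handle $(\lambda_k^{\dg}-\lambda_k)$, all while carefully tracking the nonconforming consistency terms that appear because $u_k\notin V_{\vh}$ and $a^{\dg}$ must be evaluated on $H_0^1(\Omega)$ functions. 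This same nonconformity is precisely why \eqref{source-eigenvalue-b} cannot simply quote the conforming Babu\v{s}ka--Osborn formula and must instead be re-derived through the DG bilinear form as above.
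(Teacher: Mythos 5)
Your proposal is correct and follows essentially the same route as the paper's proof: \eqref{source-eigenvalue-a} via norm convergence of $T^{\dg}$ to $T$ and standard spectral approximation theory, \eqref{source-eigenvalue-b} via the reciprocal-eigenvalue identity with the leading term $((T-T^{\dg})u_k,u_k)$ sharpened through consistency, Galerkin orthogonality and boundedness (your identity $(g,u_k)=a^{\dg}(g,Tu_k-v)$ is precisely the paper's dual-problem step with $\varphi=Tu_k=u_k/\lambda_k$), and \eqref{source-eigenvalue-c} by bounding $\|u_k^{\dg}-\lambda_k T^{\dg}u_k\|_{\dg}$ through coerciveness (your uniform stability $\|T^{\dg}f\|_{\dg}\le C\|f\|_{L^2(\Omega)}$ is coerciveness repackaged, where the paper instead applies it directly to $u_k^{\dg}-\lambda_k T^{\dg}u_k$) and finishing with the reverse triangle inequality. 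The one imprecision --- which the paper's own wording shares --- is your plan to handle $|\lambda_k^{\dg}-\lambda_k|$ in part \eqref{source-eigenvalue-c} by ``a bootstrap through \eqref{source-eigenvalue-b}'': the final form of \eqref{source-eigenvalue-b} bounds the eigenvalue error by DG-norm quantities that are not dominated by $\|(T-T^{\dg})u_k\|_{L^2(\Omega)}$, so one must instead invoke the intermediate identity you already quoted, $1/\lambda_k-1/\lambda_k^{\dg}=((T-T^{\dg})u_k,u_k)+r$, together with Cauchy--Schwarz to obtain $|\lambda_k^{\dg}-\lambda_k|\le C\|(T-T^{\dg})u_k\|_{L^2(\Omega)}$ directly.
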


\begin{remark}[{\bf repeated eigenvalues}]
We emphasize that the convergence result works not only for the case of single eigenvalue,	but also for general cases of multiple eigenvalues with $\lambda_k=\lambda_{k+1}=\cdots=\lambda_{k+q-1}$.
\end{remark}

The following theorem is the main result of this paper, which gives {\it a priori} error estimate of the eigenvalue problem for DG approximations.

\begin{theorem}[{\bf error estimate of eigenvalue problem}]
\label{them:convergence-rate}
Let nonzero $\lambda_k$ be the $k$-th eigenvalue of \eqref{model-eq} and $(\lambda_k^{\rm DG}, u_k^{\rm DG})$ be the $k$-th eigenpair of \eqref{eq-eigen-DG}. 
Suppose $M(\lambda_k) \subset \widetilde{H}^{\vs}(\Omega)$ with $\vs=(s_1,\cdots,s_N)$. 
If $C_{\sigma}$ is sufficiently large, then there exists $u_k\in M(\lambda_k)$ with $\|u_k\|_{L^2(\Omega)}=1$ such that 
\begin{align}\label{convergence-rate-eq}
\|u_k^{\rm DG}-u_k\|_{\rm DG}  &\leq C \sum_{i=1}^N  h_i^{t_i-1} \|u_k\|_{H^{t_i}(\Omega_i)}, \\[1ex]
|\lambda_k^{\rm DG}-\lambda_k| &\leq C \|u_k^{\rm DG}-u_k\|^2_{\rm DG}, \\[1ex]
\|u_k^{\rm DG}-u_k\|_{L^2(\Omega)} &\leq C h_{\rm max} \|u_k^{\rm DG}-u_k\|_{\rm DG},
\end{align}
where $t_i=\min\{p_i+1,s_i\}$ and 
the positive constant $C$ depends on the eigenvalue $\lambda_k$. 
\end{theorem}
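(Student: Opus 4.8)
The plan is to assemble the abstract convergence bounds of Theorem \ref{thm:eigen-source-bound} with the concrete approximation rates of Theorems \ref{thm:T-approximate} and \ref{thm:error_L2}, exploiting the crucial observation that an eigenfunction is its own rescaled source solution. Since $(\lambda_k,u_k)$ solves \eqref{model-eq}, it is equivalent to $\lambda_k T u_k = u_k$, so $Tu_k = \lambda_k^{-1} u_k$. Hence the hypothesis $M(\lambda_k)\subset\widetilde{H}^{\vs}(\Omega)$ transfers verbatim to $Tu_k$, with $\|Tu_k\|_{H^{t_i}(\Omega_i)} = |\lambda_k|^{-1}\|u_k\|_{H^{t_i}(\Omega_i)}$ on each patch, which is exactly what is needed to take $f=u_k$ in the source-problem estimates.

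First I would record the two master estimates for the source error driven by $u_k$. Applying Theorem \ref{thm:T-approximate} to $Tu_k$ gives
$$\|(T-T^{\dg})u_k\|_{\dg} \leq C\sum_{i=1}^N h_i^{t_i-1}\|u_k\|_{H^{t_i}(\Omega_i)},$$
and chaining this with Theorem \ref{thm:error_L2} yields $\|(T-T^{\dg})u_k\|_{L^2(\Omega)}\leq C h_{\rm max}\sum_{i=1}^N h_i^{t_i-1}\|u_k\|_{H^{t_i}(\Omega_i)}$. The key quantitative point is that the $L^2$ source error carries an extra factor $h_{\rm max}$ relative to the DG source error.

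Next I would feed these into Theorem \ref{thm:eigen-source-bound}. For the DG eigenfunction bound I use \eqref{source-eigenvalue-c}: the leading term $|\lambda_k|\,\|(T-T^{\dg})u_k\|_{\dg}$ is precisely the asserted rate, while $|R_2|\leq C\|(T-T^{\dg})u_k\|_{L^2(\Omega)}$ is, by the extra $h_{\rm max}$, of strictly higher order and is absorbed. For the $L^2$ eigenfunction bound I start from \eqref{source-eigenvalue-a}, apply the $L^2$ source estimate, and then replace $\|(T-T^{\dg})u_k\|_{\dg}$ by $\|u_k^{\dg}-u_k\|_{\dg}$; this replacement is legitimate because \eqref{source-eigenvalue-c} shows the two DG-norm quantities are equivalent up to the higher-order $R_2$. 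For the eigenvalue bound I use \eqref{source-eigenvalue-b}, bounding the best-approximation factor $\inf_{v\in V_{\vh}}\|u_k-v\|_{\dg}$ by the patchwise projection $\Pi_i u_k$ (which lies in $V_{\vh}$ since open knot vectors interpolate the homogeneous boundary data) and invoking Lemma \ref{lemma:multi-patch-projection-err} together with a scaled trace estimate for the jump contributions; this gives $\inf_{v}\|u_k-v\|_{\dg}\leq C\sum_i h_i^{t_i-1}\|u_k\|_{H^{t_i}(\Omega_i)}$, of the same order as the DG source error. Since $|R_1|\leq C\|(T-T^{\dg})u_k\|^2_{L^2(\Omega)}$ is of order $h_{\rm max}^2$ times the square of the DG rate, it is again negligible, and \eqref{source-eigenvalue-b} collapses to $|\lambda_k^{\dg}-\lambda_k|\leq C\|(T-T^{\dg})u_k\|^2_{\dg}\leq C\|u_k^{\dg}-u_k\|^2_{\dg}$.

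Because the substantive analysis is delegated to the earlier theorems, the main obstacle here is essentially bookkeeping: verifying that every remainder ($R_1$, $R_2$, and the cross terms) is genuinely of higher order than its leading term, and that the constants—permitted to depend on $\lambda_k$—stay independent of $\vh$. The one step demanding real care is the best-approximation estimate in the DG-norm, where the jump terms weighted by $(h_i^{-1}+h_j^{-1})$ must be controlled on the projection error $u_k-\Pi_i u_k$, which is not a spline function; thus Lemma \ref{lemma:trace-inequality} does not apply directly and a continuous scaled trace inequality of the form $\|w\|^2_{L^2(F_{ij})}\leq C(h_i^{-1}\|w\|^2_{L^2(\Omega_i)}+h_i\|w\|^2_{H^1(\Omega_i)})$ is needed to recover the correct power of $h_i$.
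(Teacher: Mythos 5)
Your proposal follows the same route as the paper: the paper's entire proof is the one-line remark that the theorem is a direct corollary of Theorems \ref{thm:T-approximate}, \ref{thm:error_L2} and \ref{thm:eigen-source-bound}, and your assembly (take $f=u_k$, use $Tu_k=\lambda_k^{-1}u_k$ to transfer the regularity hypothesis, feed the source estimates into \eqref{source-eigenvalue-a}--\eqref{source-eigenvalue-c}, and absorb $R_1$, $R_2$ as higher order) is exactly what that remark leaves implicit. The first and third inequalities of the theorem are fully justified by your argument.

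The one step that does not hold as written is the eigenvalue bound. You estimate $\inf_{v\in V_{\vh}}\|u_k-v\|_{\dg}$ by the patchwise projection error, obtaining a bound by $C\sum_i h_i^{t_i-1}\|u_k\|_{H^{t_i}(\Omega_i)}$; this yields $|\lambda_k^{\dg}-\lambda_k|\leq C\bigl(\sum_i h_i^{t_i-1}\|u_k\|_{H^{t_i}(\Omega_i)}\bigr)^2$, a rate-squared bound, but it does not give the stated inequality $|\lambda_k^{\dg}-\lambda_k|\leq C\|u_k^{\dg}-u_k\|^2_{\dg}$. Knowing that both $\inf_{v}\|u_k-v\|_{\dg}$ and $\|(T-T^{\dg})u_k\|_{\dg}$ are bounded \emph{above} by the rate does not let you replace the former by the latter in the product, since you have no lower bound on $\|(T-T^{\dg})u_k\|_{\dg}$; the error could in principle be much smaller than the rate. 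The fix is simpler than the machinery you set up: since $u_k^{\dg}\in V_{\vh}$, take $v=u_k^{\dg}$ in the infimum, so $\inf_{v\in V_{\vh}}\|u_k-v\|_{\dg}\leq\|u_k-u_k^{\dg}\|_{\dg}$; combined with the equivalence $\|(T-T^{\dg})u_k\|_{\dg}\leq C\|u_k^{\dg}-u_k\|_{\dg}$ (which you already extracted from \eqref{source-eigenvalue-c} by absorbing $R_2$ for $h_{\rm max}$ small) and with $|R_1|\leq Ch_{\rm max}^2\|(T-T^{\dg})u_k\|^2_{\dg}$, the estimate \eqref{source-eigenvalue-b} then gives $|\lambda_k^{\dg}-\lambda_k|\leq C\|u_k^{\dg}-u_k\|^2_{\dg}$ directly. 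This choice also renders the best-approximation estimate, and the continuous scaled trace inequality you rightly note would be needed for it, unnecessary for this theorem: that machinery lives inside the proof of Theorem \ref{thm:T-approximate}, which is already established.
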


\begin{proof}
The error estimate is a direct corollary of Theorem \ref{thm:T-approximate},  \ref{thm:error_L2} and  \ref{thm:eigen-source-bound}.
\end{proof}

\begin{remark}[{\bf nonlinear eigenvalue problem}]
Within the framework of Kohn-Sham density functional theory, one needs to solve the nonlinear eigenvalue problem \eqref{eq:eigen}  with the SCF iteration. 
Using our DG discretizations, the linear eigenvalue problem \eqref{eq-eigen-DG} is solved at each iteration step and complex mixing schemes such as Roothaan, level-shifting, and DIIS algorithms (see, e.g., \cite{cances00,lebris03}) are used to achieve convergence.
	
If the exchange-correlation potential $V_{\rm xc}$ is sufficiently smooth and the trial states (from previous DG approximations) $\{\tilde{u}_1, \cdots, \tilde{u}_{\Ne}\}\in (V_{\vh})^{\Ne}$, then we have from arguments similar to those in \cite{flad08} that the eigenfunctions $\{u_i\}_{i=1,\cdots,\Ne}$ of the Kohn-Sham Hamiltonian $H$ in \eqref{eq:eigen} are smooth except at the positions of the nuclei.
This regularity together with the analysis in Theorem \ref{them:convergence-rate} gives the convergence rates for the DG approximations of the (linear) eigenvalue problem in each SCF iteration step.
	
Note that we have not provided an {\it a priori} error estimate for approximations of nonlinear eigenvalue problems, but only for linearized equations in SCF iterations.
We refer to \cite{cances12,chen13} for numerical analysis of nonlinear eigenvalue problems.
\end{remark}

\begin{remark}[{\bf multiscale refinement}]
\label{remark:multiscale-refinement}
We point out that if the mesh sizes of different subdomains are refined at the same rate, where there exists a positive constant $C$ not depending on the mesh size $\vh$ such that $h_{\rm max} \le C h_{\rm min}$, the convergence rate in \eqref{convergence-rate-eq} would be restricted by the lowest regularity in subdomains.

From a multiscale discretization perspective, the error bound in Theorem \ref{them:convergence-rate} suggests a better refinement strategy that corresponds to the local regularity of the eigenfunctions. 
The lower the local regularity in a subdomain, the faster the mesh refinement.
The ratio of mesh refinement in different regions is determined by the local regularity.

Let us take a simple case with two subdomains as an illustrative example.
We assume the eigenfunction $u\in \widetilde{H}^{(s_1, s_2)}(\Omega)$ with $1\leq s_1<s_2$, i.e., $u\in H^{s_1}(\Omega_1)$ and $u\in H^{s_2}(\Omega_2)$,
and the polynomial degrees $p_1$ and $p_2$ are both larger than $s_2-1$.
If the uniform mesh size $h_1=h_2=h_{\rm max}$ is used in the whole computational domain, the error in \eqref{convergence-rate-eq} would be $\mathcal{O}(h_{\rm max}^{2(s_1-1)})$.
On the other hand, we can use a coarse mesh in $\Omega_2$ while applying a fine mesh in $\Omega_1$ with $h_1=\mathcal{O}(h_2^{\frac{s_2-1}{s_1-1}})$, which gives rise to a higher convergence rate $\mathcal{O}(h_{\rm max}^{2(s_2-1)})$.

In theory, the convergence rate can be improved from the order determined by the lowest local regularity to that determined by the highest local regularity among subdomains. In practice, however, if there is a significant disparity in local regularity across different subdomains, refining the mesh to a sufficiently small scale becomes challenging, hindering the achievement of the optimal convergence order. Nevertheless, this multiscale mesh refinement approach can significantly accelerate convergence, especially when handling substantial differences in local regularity across regions.
\end{remark}

\section{Numerical results}
\label{sec-numerical} 
\setcounter{equation}{0}
\setcounter{figure}{0}

In this section, we will present several numerical experiments in electronic structure calculations. The purpose of this section is not only to validate our theoretical results but also to demonstrate the effectiveness of the DG-IGA method. 
In the following, the eigensolver EIGIFP  \cite{money2005algorithm} is adopted to solve the generalized eigenvalue problem, and the degrees of B-splines used to test the convergence rate of the eigenvalues and the corresponding eigenfunctions are $p=1$ and $p=2$, and the reference solution is obtained by the DG-IGA method on a sufficiently dense mesh with cubic B-spline basis functions, unless otherwise explicitly stated. All the simulations are performed on a PC with  Intel Core i9-12900K processor and 128 GB RAM, and the source codes can be found in \cite{DG_IGA_Eigenvalue_Codes}.

\vskip 0.5cm

\noindent{\bf Example 1 (2D linear problem for a single-atom system).}
Consider the two-dimensional linear eigenvalue problem: Find $\lambda\in\mathbb{R}$ and $u\in H^1_0(\Omega)$ with $\|u\|_{L^2(\Omega)}=1$ such that
\begin{eqnarray}\label{toy_model_1}
\left(-\frac{1}{2}\Delta +V \right)u = \lambda u,
\end{eqnarray}
where $\Omega=[-1,1]^2$ and 
$\displaystyle V({\vr})=-1/{|\vr|}$. 

\begin{figure}[!t]
  \centering 
\subfigure[The decomposition of $\Omega$]{\includegraphics[width=0.35\textwidth,height=0.35\textwidth]{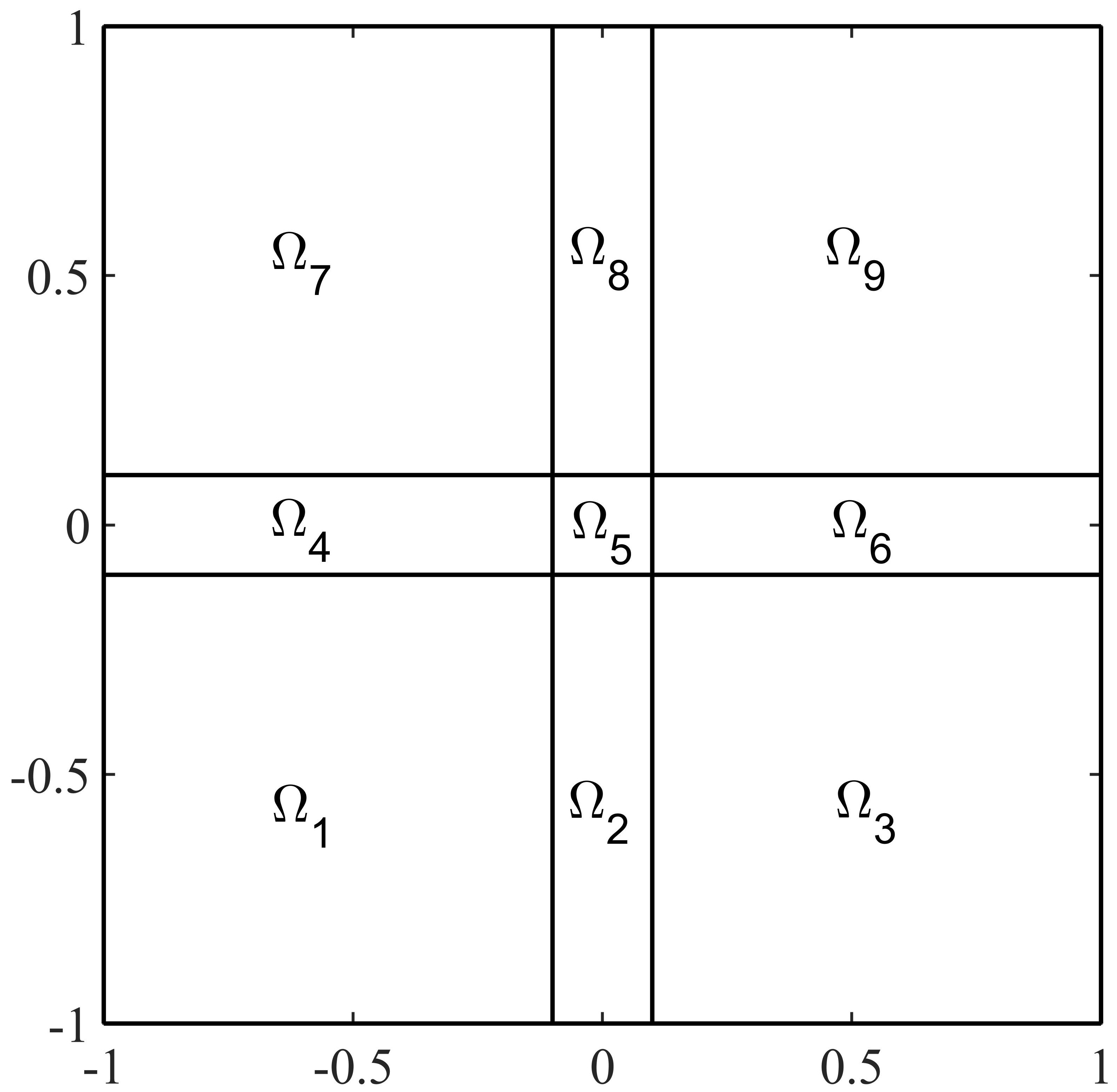}}
\label{The initial non-matching mesh.}
\subfigure[The initial non-matching mesh]{\includegraphics[width=0.35\textwidth,height=0.35\textwidth]{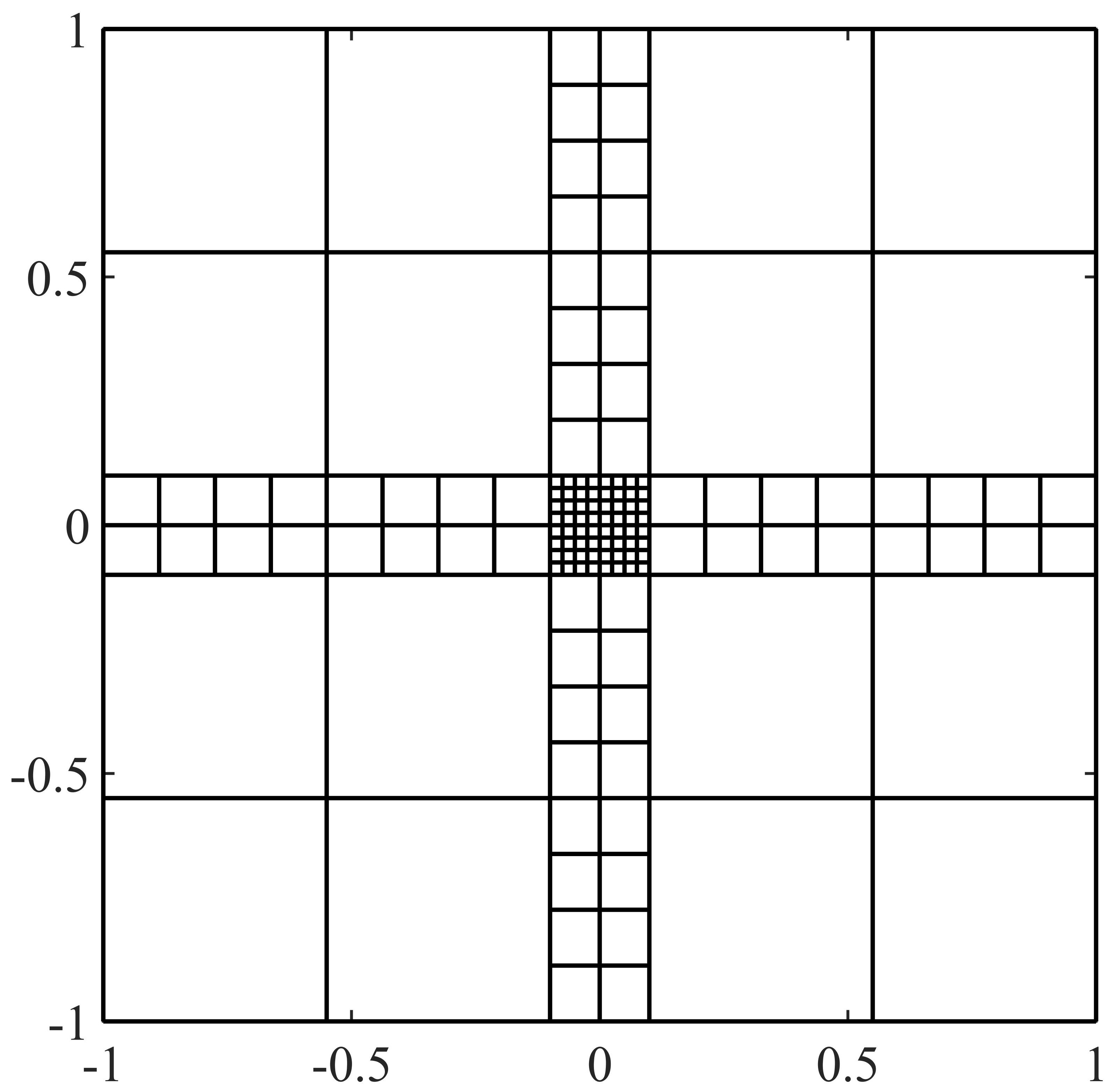}} \label{fig:Example_Mesh}
  \caption{The patch decomposition and initial mesh of computational domain. }\label{fig:Ex_1_2D}
\end{figure}

In the simulation, the domain $\Omega$ is divided into 9 patches, see Figure \ref{fig:Ex_1_2D}(a), and the initial mesh, see Figure \ref{fig:Ex_1_2D}(b), includes 144 elements, where $h_{\rm max}=0.45$ and $h_{\rm min} = 0.025 $. 
The reference values for the first four eigenvalues are
\[
\lambda_1 = -1.51501061, ~
\lambda_2 =  4.26228893, ~
\lambda_3 =  4.26228893, ~ 
\lambda_4 =  8.38293009,
\]
and the corresponding reference eigenfunctions, denoted by $u_i$ ($1\le i\le 4$), are displayed in Figure \ref{fig:example_1_ref_uh}. 
For the 2D linear eigenvalue problem with the singular potential, the first eigenfunction satisfies $u_1\in H^{2-\epsilon}(\Omega)$ for any $\epsilon>0$, see \cite{maday2019regularity}. 
It is observed from Figure \ref{fig:example_1_ref_uh} that the
DG-IGA method can accurately describe the cusp of $u_1$ located at the origin, while the last three eigenfunctions are much smoother.

\begin{figure}[h!]
\subfigure[$u_1$]{\includegraphics[width=0.22\textwidth,height=0.223\textwidth]{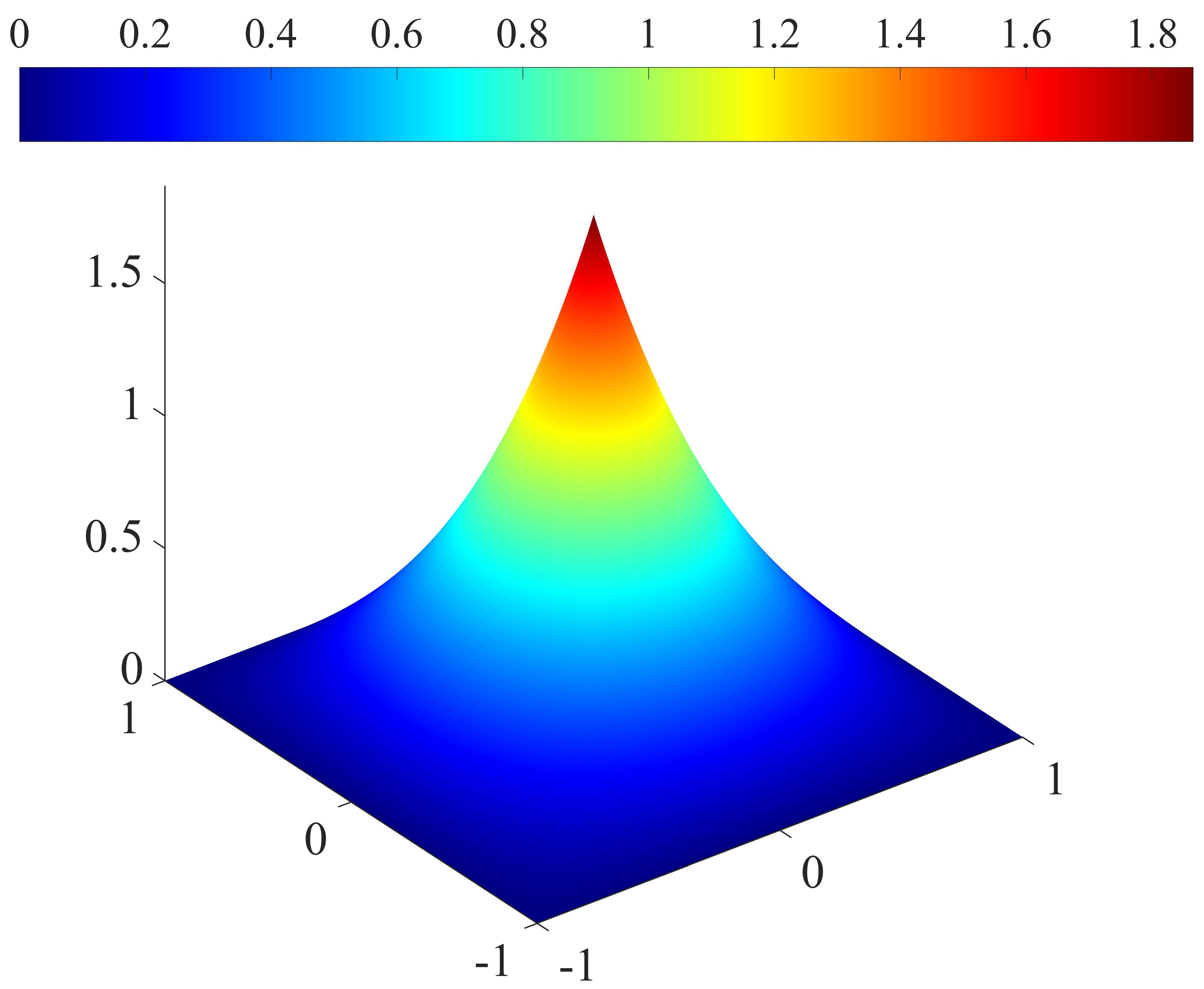}}
\subfigure[$u_2$]{\includegraphics[width=0.22\textwidth,height=0.223\textwidth]{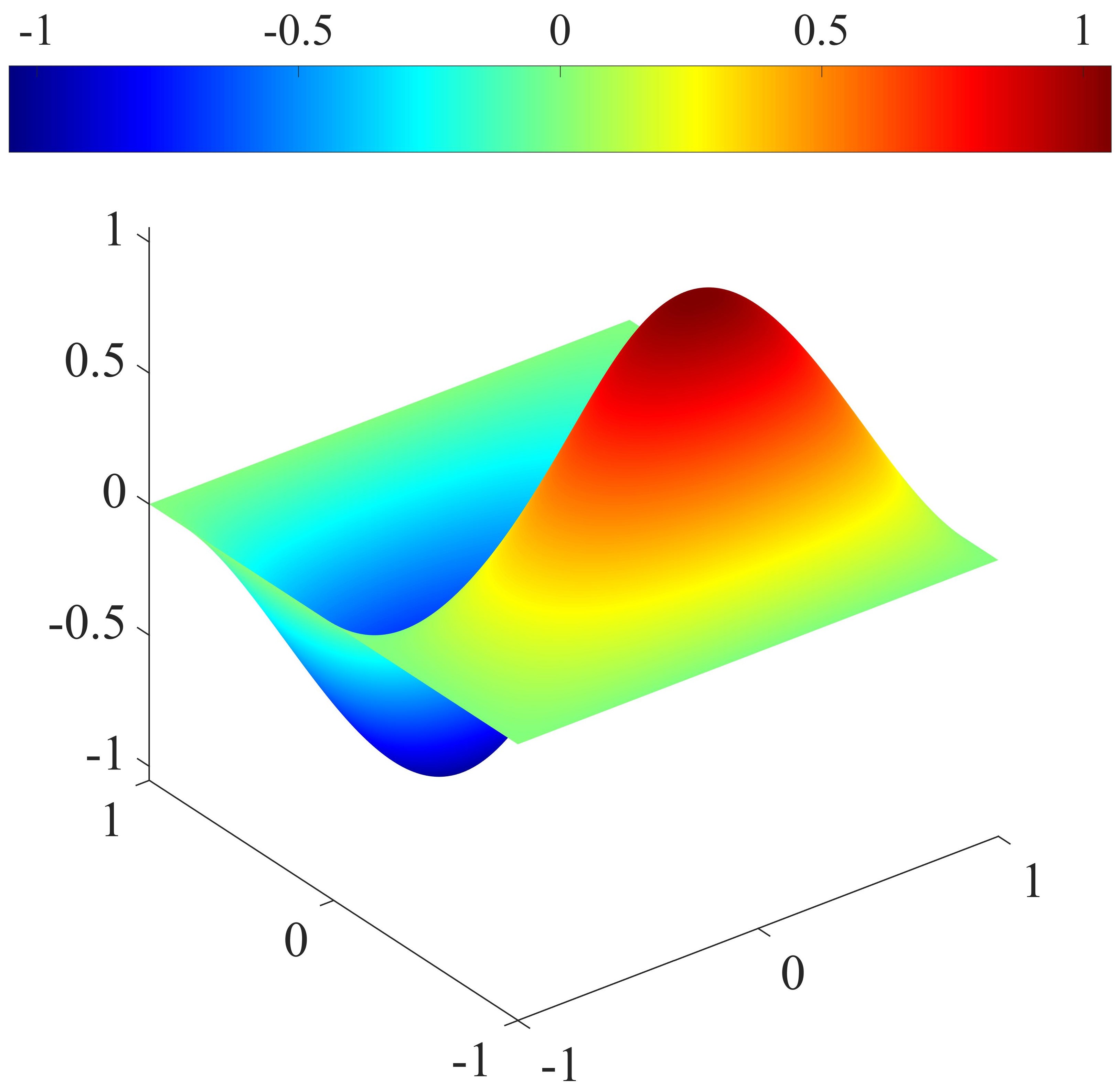}}
\subfigure[$u_3$]{\includegraphics[width=0.22\textwidth,height=0.223\textwidth]{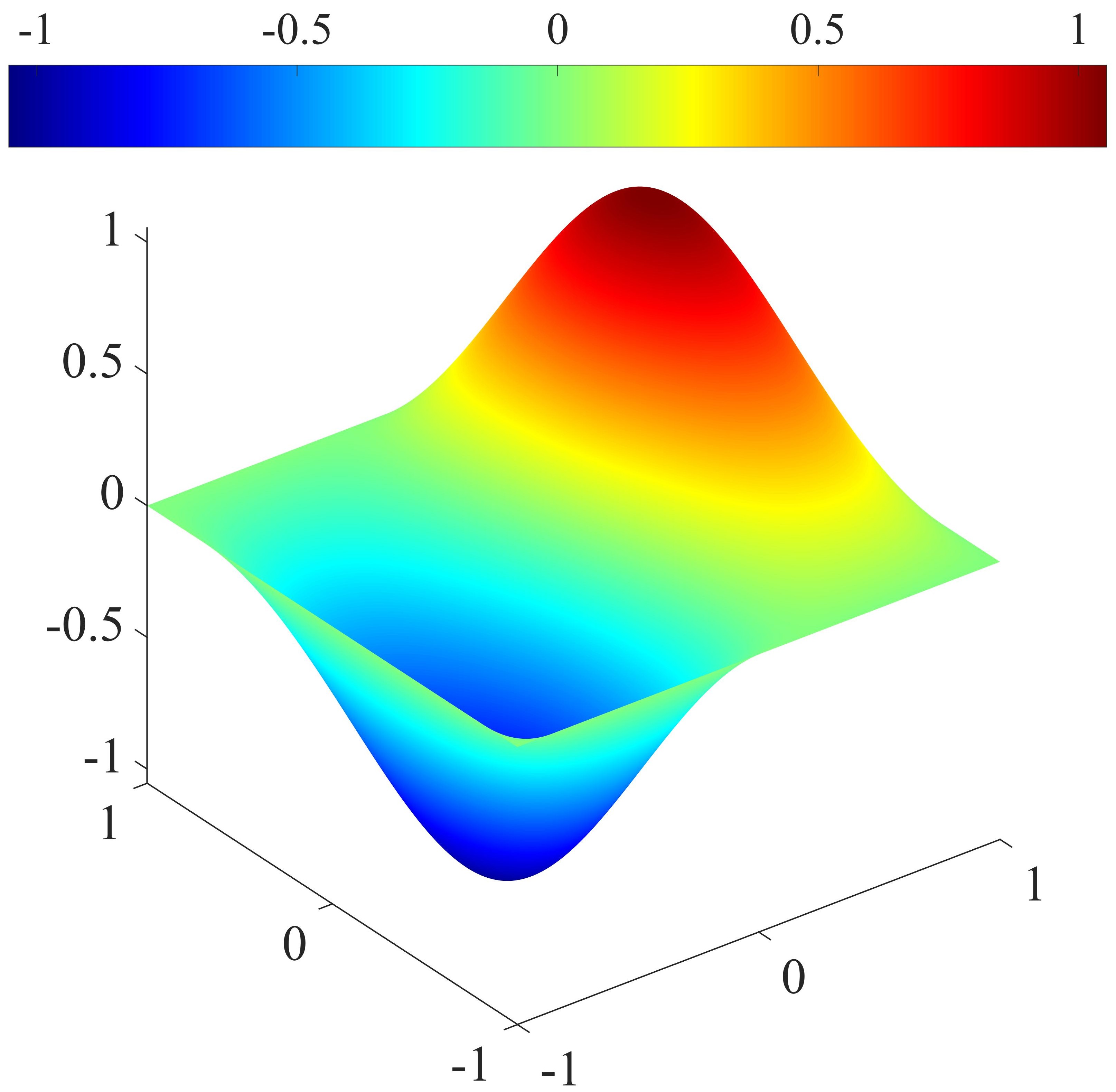}}
\subfigure[$u_4$]{\includegraphics[width=0.22\textwidth,height=0.223\textwidth]{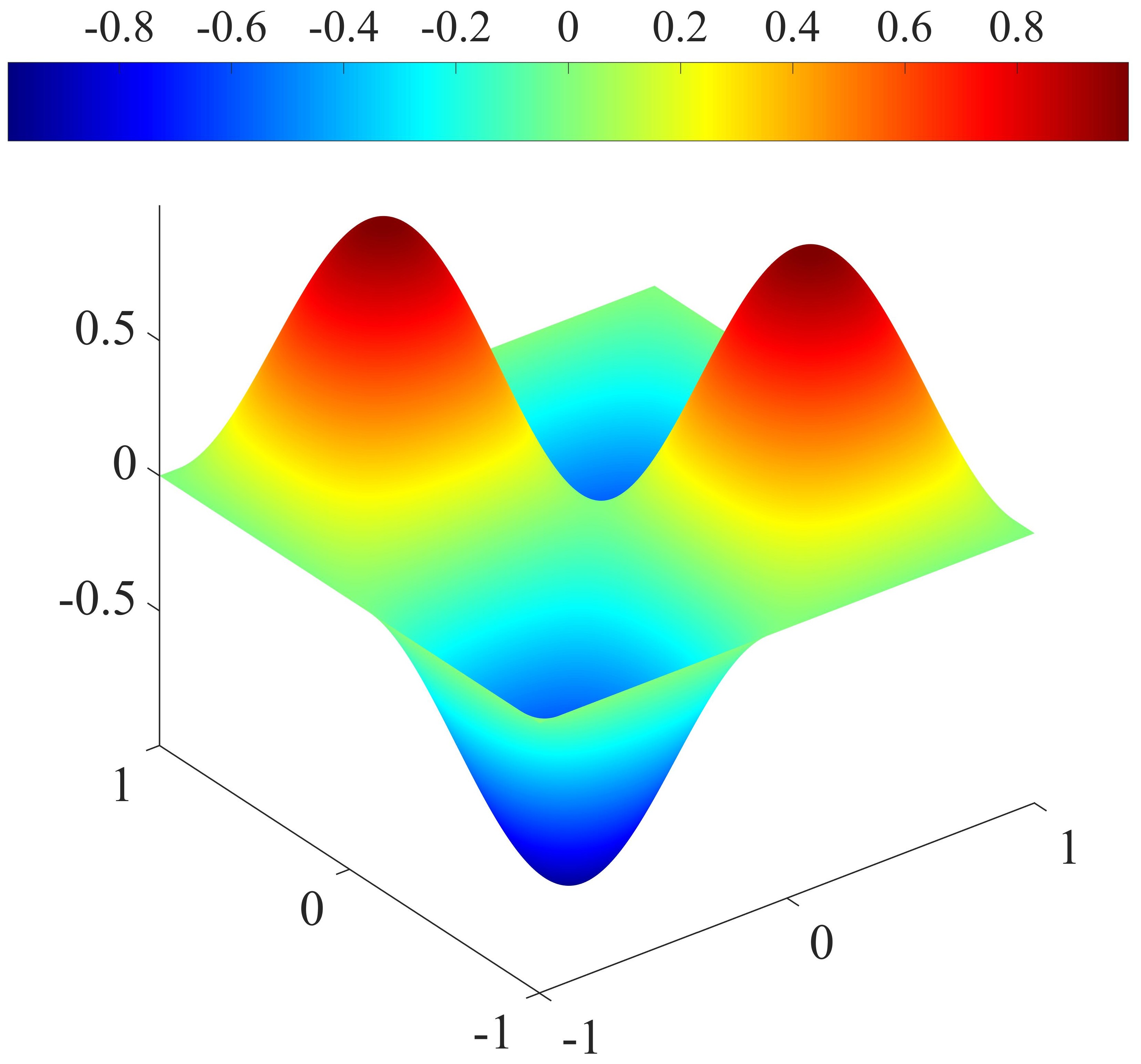}}
\caption{(Example 1) The first four reference eigenfunctions. } 
\label{fig:example_1_ref_uh}
\end{figure}

We present the first eigenvalue obtained by the DG-IGA and IGA methods in Table \ref{table:Example_1_1st_eigenvalue}. 
The results show that the DG-IGA method achieves higher accuracy with over 25 times fewer DOFs compared to the IGA method, demonstrating its superior performance when the corresponding eigenfunction has low regularity.

\renewcommand{\arraystretch}{1.5}  
\begin{table}[tp]  
  \centering 
  \scriptsize
    \begin{tabular}{c|c|c|c||c|c|c|c}   
    \hline    \multicolumn{4}{c||}{$p=1$} & \multicolumn{4}{c}{$p=2$}\cr 
    \hline    \multicolumn{2}{c|}{DG-IGA } & \multicolumn{2}{c||}{IGA} & \multicolumn{2}{c|}{DG-IGA } & \multicolumn{2}{c}{ IGA} \cr \hline
  DOFs  & $\lambda_1^{\rm DG}$  &  DOFs & $\lambda_1^{\rm IGA}$  &  DOFs  & $\lambda_1^{\rm DG}$  &   DOFs & $\lambda_1^{\rm IGA}$ \cr \hline   
  149,769  &   -1.51486965 & 4,198,401 &  -1.51469778 &
  152,100  &   -1.51499555 & 4,202,500 &  -1.51482259   \cr\hline 
    \end{tabular} 
\caption{(Example 1) The first eigenvalue computed by the DG-IGA  and IGA methods, where  $\lambda_1  = -1.51501061$.} \label{table:Example_1_1st_eigenvalue} 
\end{table}

The convergence of the first four eigenvalues with respect to $h_{\rm max}$ is presented in Figure \ref{fig:example_1_Eigenvalues_Error}, and the convergence of the first eigenfunction in $L^2$ and DG norms is shown in Figure \ref{fig:example_1_u1_h_Error}. It is observed that the convergence rates depend on both the degree of B-splines and the regularity of eigenfunctions. Moreover, the numerical convergence rates agree well with our theoretical results.
We also present the distribution of the absolute error for the first eigenfunction over $\Omega$ in Figure \ref{fig:example_1_EigenFunction_Error}. 
It can be found that the error becomes smaller as the initial mesh is successively refined, confirming the convergence of our method.

\begin{figure}[!t]
  \centering 
\subfigure[First eigenvalue]{\includegraphics[width=0.48\textwidth]{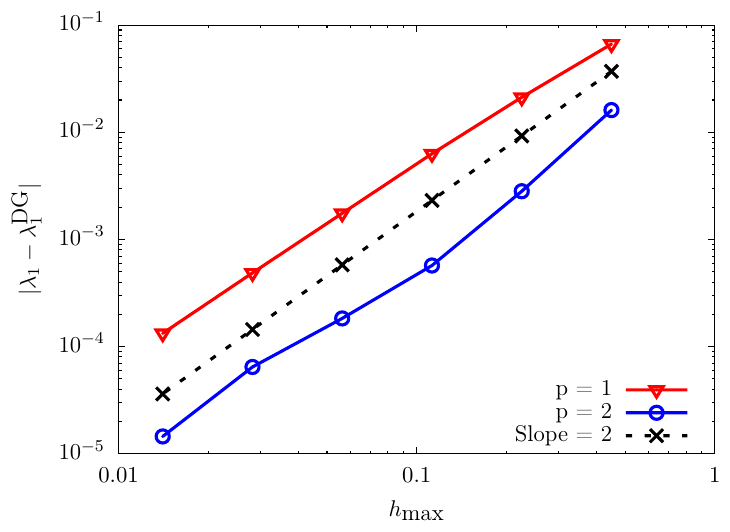}}
  \subfigure[Second-forth eigenvalues]{\includegraphics[width=0.48\textwidth]{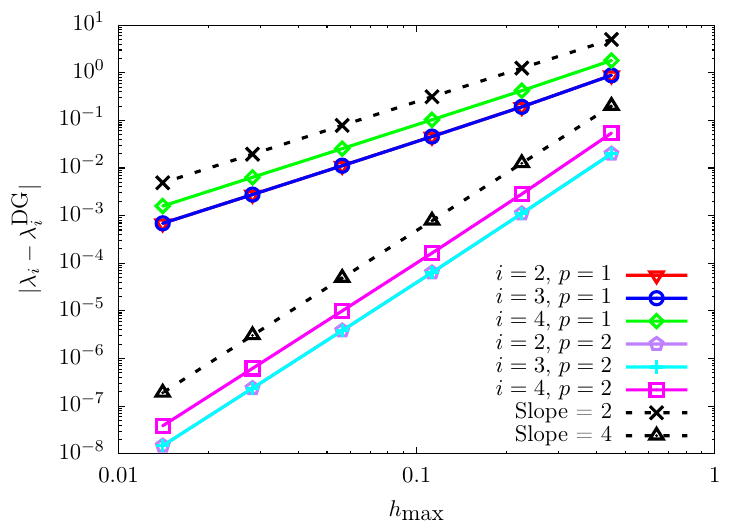}}  
  \caption{(Example 1) The convergence of the first four eigenvalues. } \label{fig:example_1_Eigenvalues_Error}
\end{figure}

\begin{figure}[!t]
  \centering 
  {\includegraphics[width=0.6\textwidth]{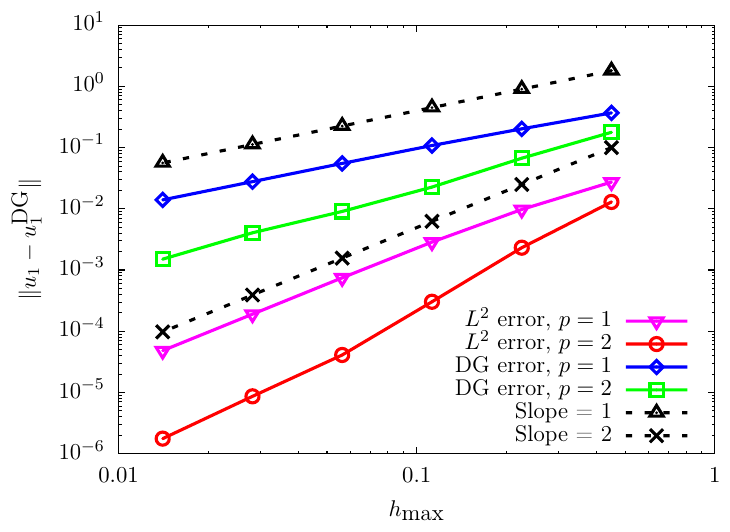}} 
  \caption{(Example 1) The convergence of the 
  first eigenfunction. 
  } \label{fig:example_1_u1_h_Error}
\end{figure}

\begin{figure}[!t]
  \centering 
\subfigure[$h_{\rm max}=0.45 $]{\includegraphics[width=0.31\textwidth,height=0.19\textwidth]{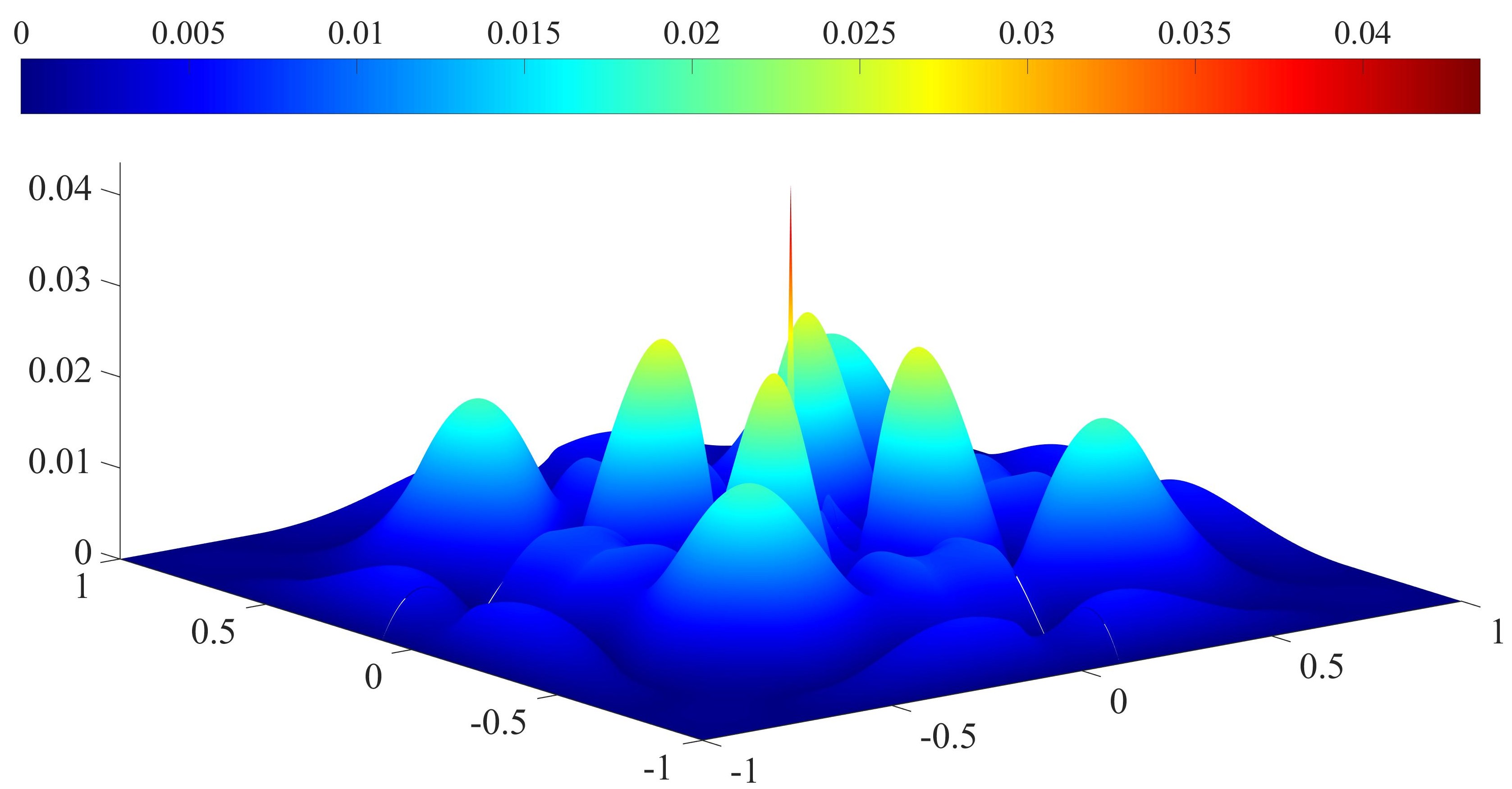}}
  \subfigure[$h_{\rm max}=0.45/2 $]{\includegraphics[width=0.31\textwidth,height=0.187\textwidth]{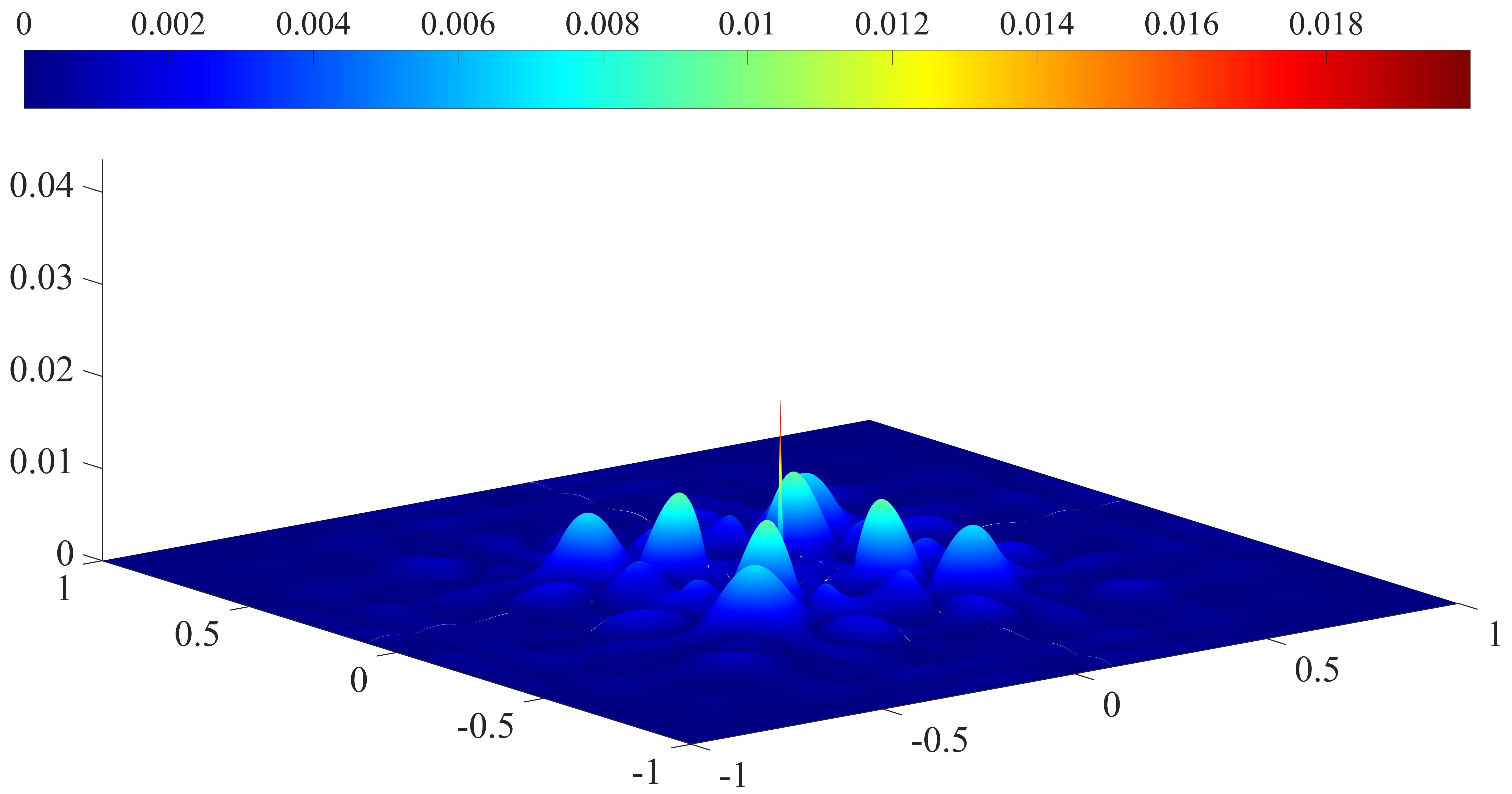}} 
    \subfigure[$h_{\rm max}=0.45/2^2$ ]{\includegraphics[width=0.31\textwidth,height=0.20\textwidth]{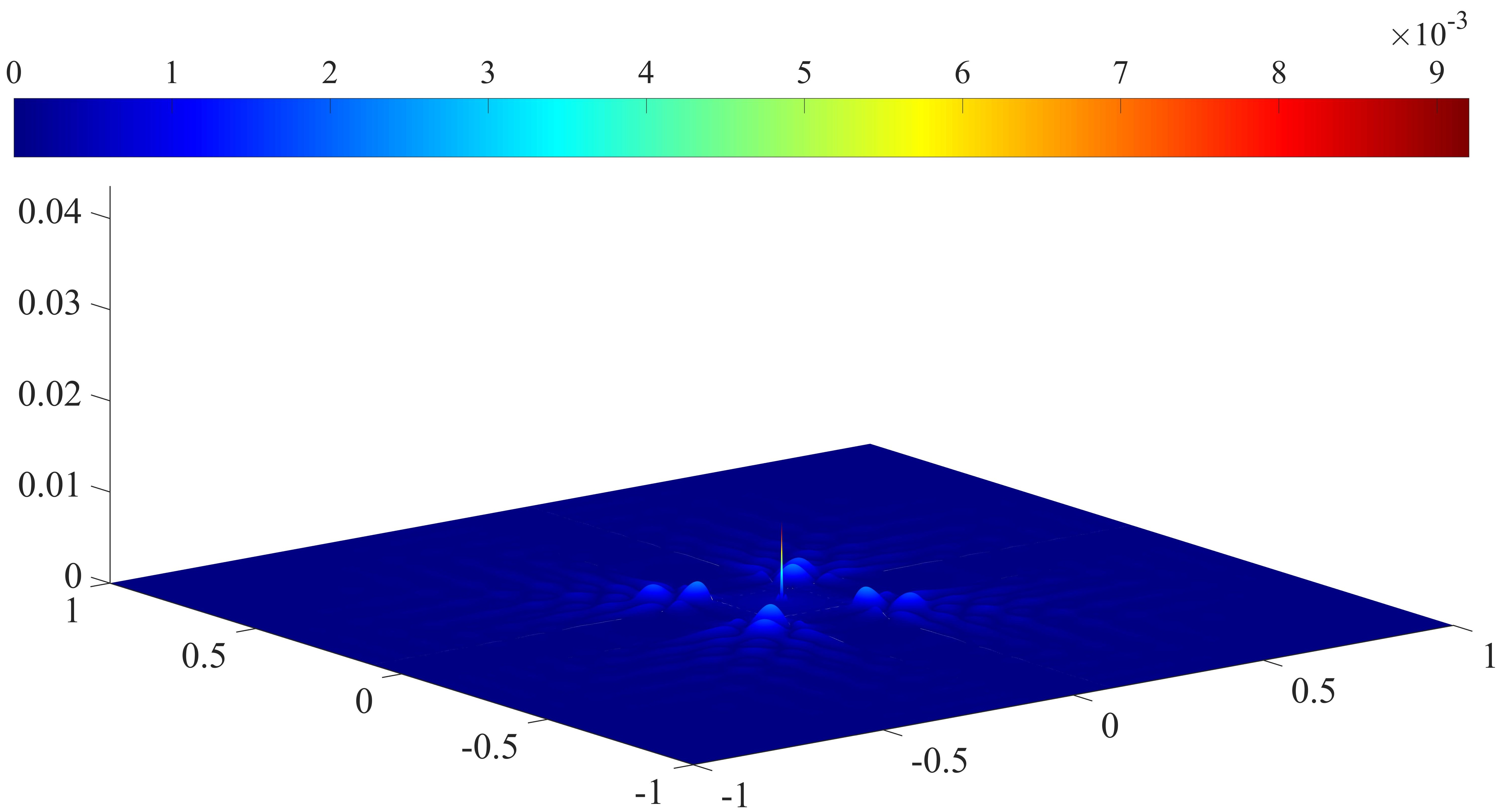}}  
  \caption{(Example 1) The distribution of $|u_1 - u_1^{\rm DG}|$ on three different meshes with $p=2$. } \label{fig:example_1_EigenFunction_Error}
\end{figure}

Finally, we test the effectiveness of the multiscale refinement strategy discussed in Remark \ref{remark:multiscale-refinement}. 
We show the convergence of first eigenpair in Figure \ref{fig:example_1_u1_h_Error_multiscale_refine}, where the relation $h_{\rm min} = \mathcal{O}(h_{\rm max}^2)$ is maintained during the mesh refinement, and it can be observed that the convergence rates of both eigenvalue and eigenfunction depend only on $p$. 
The results are different from those presented in Figure \ref{fig:example_1_Eigenvalues_Error}(a) and Figure \ref{fig:example_1_u1_h_Error}, where the convergence rates depend on both the degree $p$ and the regularity of $u_1$. These numerial convergence results show that the multiscale refinement strategy can significantly improve the convergence rate of numerical error for problems with non-smooth solutions.

\begin{figure}[!t]
  \centering 
  {\includegraphics[width=0.45\textwidth]{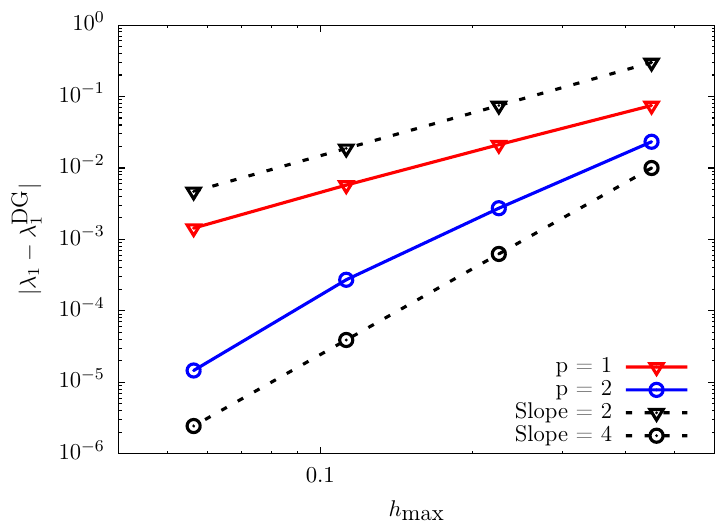}}
  {\includegraphics[width=0.45\textwidth]{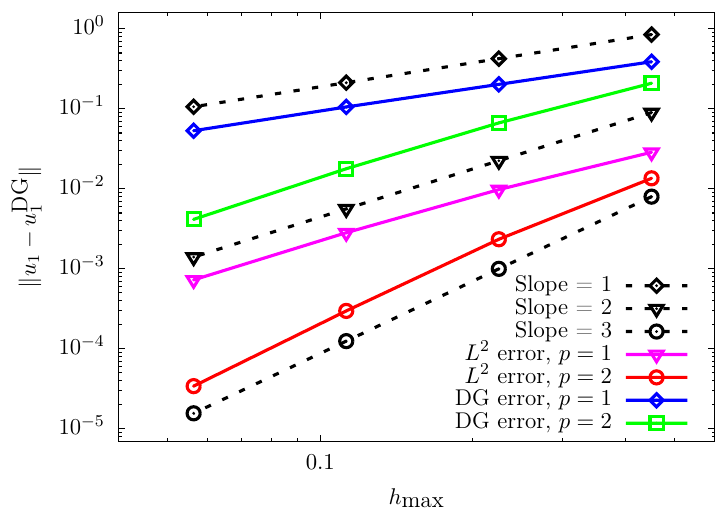}} 
  \caption{(Example 1) The convergence of the first eigenpair, where the multiscale refinement strategy is used.} \label{fig:example_1_u1_h_Error_multiscale_refine}
\end{figure}

\vspace{0.5cm}

\noindent{\bf Example 2 (2D linear problem for a two-atom system).}
Consider the two-dimensional linear eigenvalue problem for a two-atom system: Find $\lambda\in\mathbb{R}$ and $u\in H^1_0(\Omega)$ with $\|u\|_{L^2(\Omega)}=1$ such that
\begin{eqnarray}\label{toy_model_2}
\left(-\frac{1}{2}\Delta +V_{1}+V_{2} \right)u = \lambda u,
\end{eqnarray}
where $\Omega=[-1,1]^2$ and
$\displaystyle V_{j}(\vr) = -\frac{1}{|\vr-\vR_j|}~(j=1,2)$
with ${\bf R_1} = (-0.5,0)$ and ${\bf R_2} = (0.5,0)$ the positions of atoms. 

In this example, the domain $\Omega$ is divided into 15 patches, and the initial mesh contains 405 elements, where $h_{\rm max} =  0.45$ and $h_{\rm min } = 0.025$. 
The reference values for the first four eigenvalues are 
\[
\lambda_1  = -2.7618446474, ~ \lambda_2  =  -0.1848511166, ~ \lambda_3  =  3.0990197746, ~\lambda_4  =  6.7413028611.
\]
The  corresponding eigenfunctions are displayed in Figure \ref{fig:example_2_ref_uh}, from which we can observe that the regularity of the first two eigenfunctions is lower  than the other two eigenfunctions.

\begin{figure}[!t]
\subfigure[$u_1 $]{\includegraphics[width=0.24\textwidth,height=0.223\textwidth]{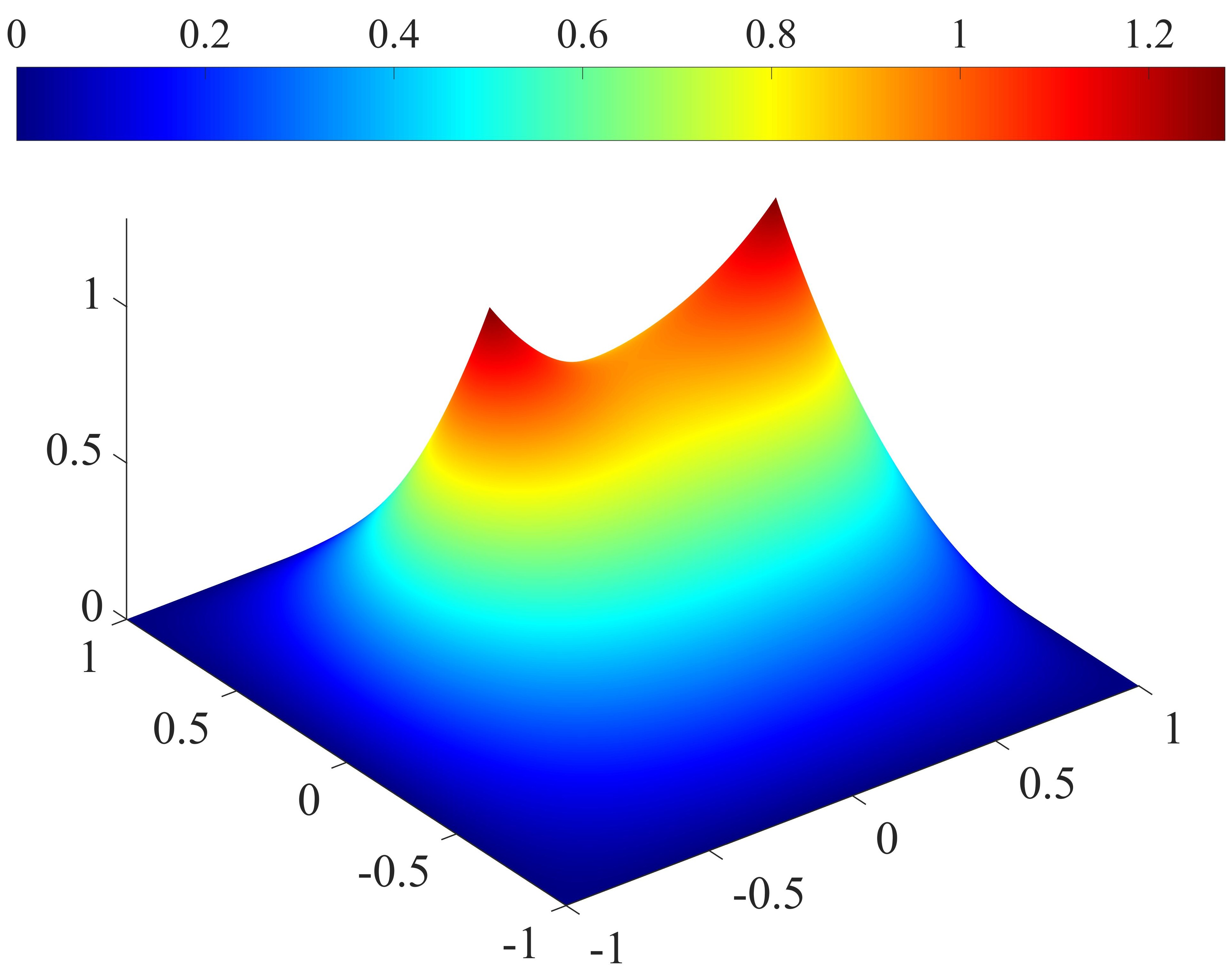}}
\subfigure[$u_2 $]{\includegraphics[width=0.24\textwidth,height=0.221\textwidth]{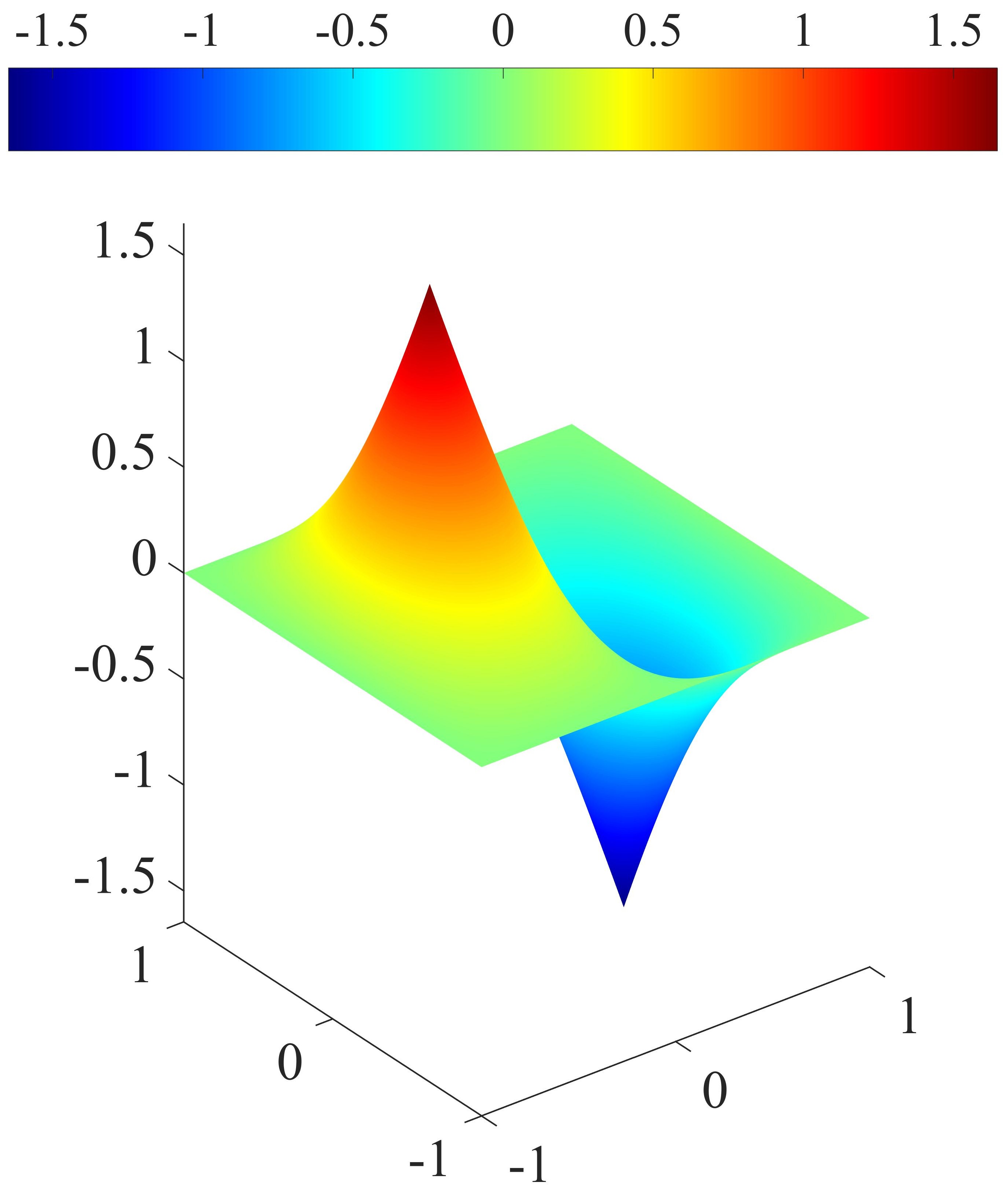}}
\subfigure[$u_3 $]{\includegraphics[width=0.24\textwidth,height=0.223\textwidth]{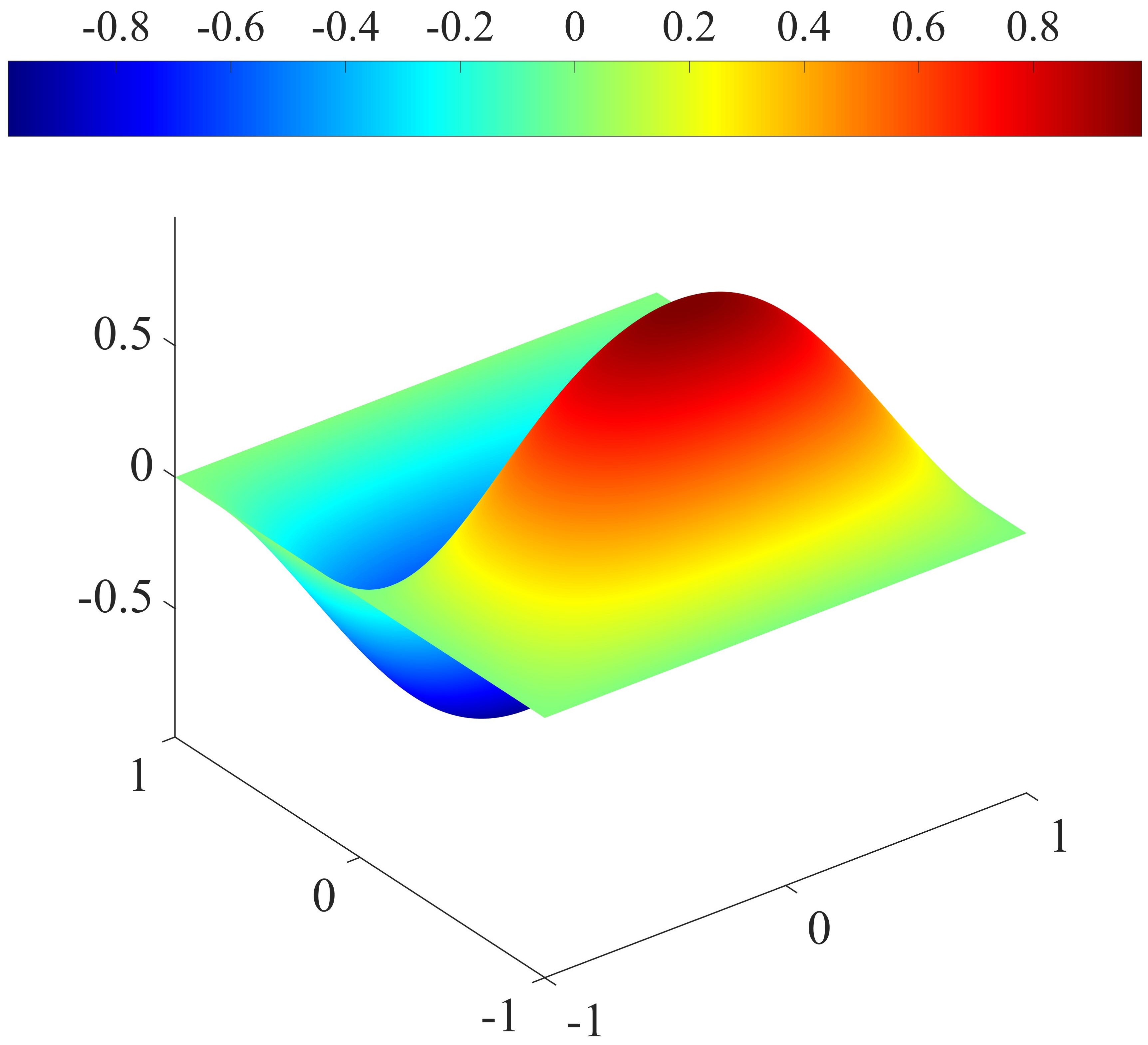}}
\subfigure[$u_4 $]{\includegraphics[width=0.24\textwidth,height=0.223\textwidth]{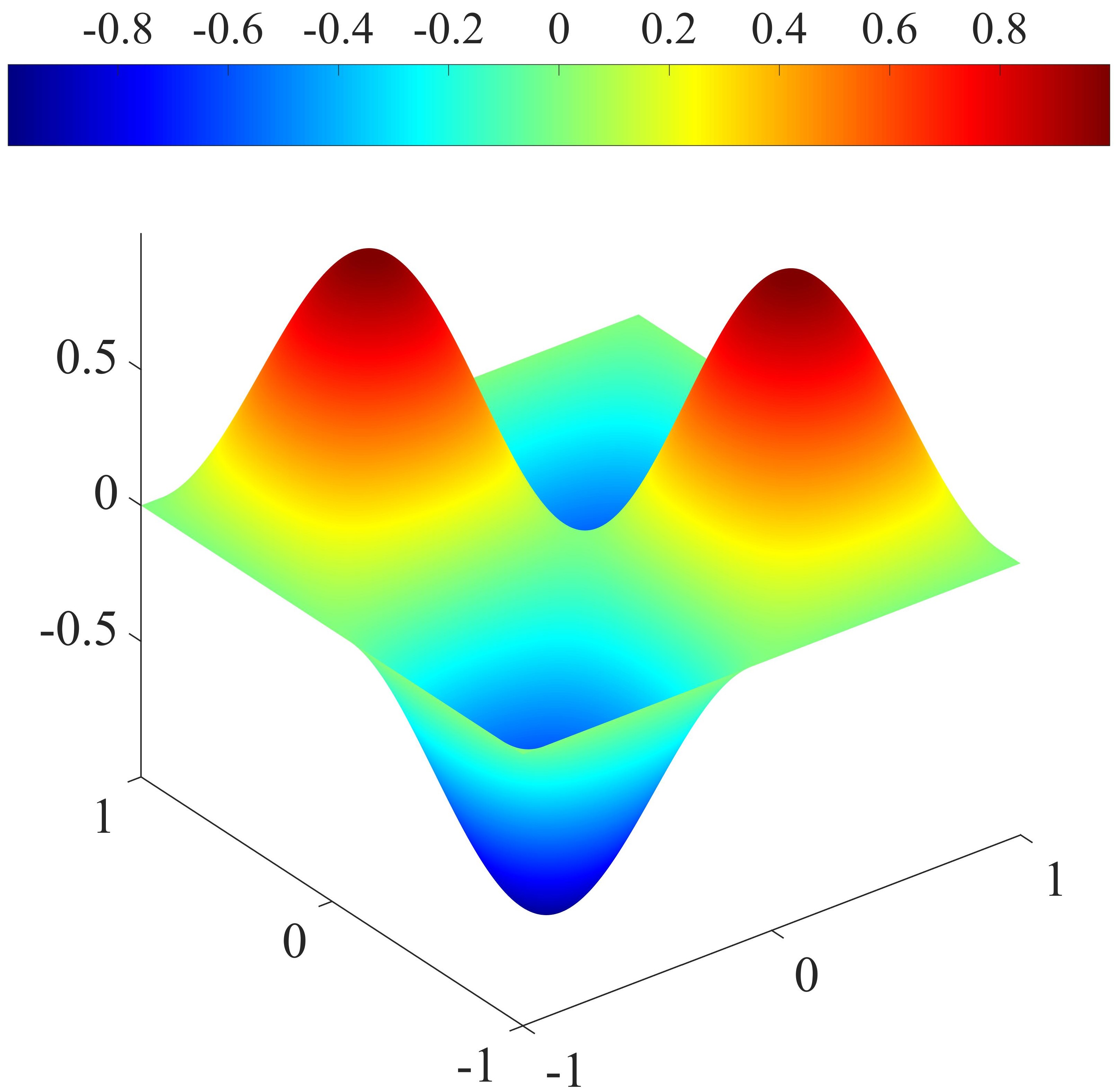}}
\caption{(Example 2) The first four reference eigenfunctions. 
} \label{fig:example_2_ref_uh}
\end{figure}

We show the first eigenvalue computed by the DG-IGA and IGA methods in Table \ref{table:Example_2_1st_eigenvalue}. From the table, we can draw the same conclusion as in Example 1, that is, the DG-IGA method outperforms the IGA method for the linear eigenvalue problem with singular potentials.

\renewcommand{\arraystretch}{1.5}  
\begin{table}[tp]  
  \centering 
  \scriptsize
    \begin{tabular}{c|c|c|c||c|c|c|c}   
    \hline    \multicolumn{4}{c||}{$p=1$} & \multicolumn{4}{c}{$p=2$}\cr 
    \hline    \multicolumn{2}{c|}{DG-IGA } & \multicolumn{2}{c||}{IGA} & \multicolumn{2}{c|}{DG-IGA } & \multicolumn{2}{c}{IGA} \cr \hline
  DOFs  & $\lambda_1^{\rm DG}$  &  DOFs & $\lambda_1^h$  &   DOFs  & $\lambda_1^{\rm DG}$  &   DOFs & $\lambda_1^h$ \cr \hline   
  274,383   &   -2.7617408370 & 
  4,198,401 &   -2.7615573535 &
  71,676    &   -2.7617839821 & 4,202,500 &   -2.7616748981  \cr\hline 
    \end{tabular} 
\caption{(Example 2) The first eigenvalue computed by the DG-IGA and IGA methods, where $\lambda_1  = -2.7618446474$.} \label{table:Example_2_1st_eigenvalue} 
\end{table} 

The convergence of the first four eigenvalues is presented in Figure \ref{fig:example_2_Eigenvalues_Error}, and the
convergence of the corresponding eigenfunctions in $L^2$ and DG norms is shown in Figure \ref{fig:example_2_EigenFunction_Error_p_1_2}. Similar to Example 1, it is observed that the convergence rate depends on both the degree of B-splines  and the regularity of eigenfunctions, and the numerical convergence results again confirm our theoretical result.

\begin{figure}[!t]
  \centering 
\subfigure[$p = 1$]{\includegraphics[width=0.48\textwidth]{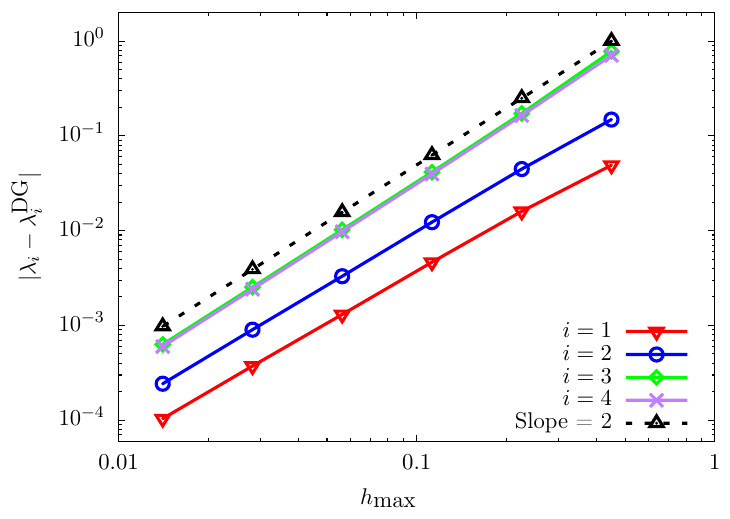}}
  \subfigure[$p = 2$]{\includegraphics[width=0.48\textwidth]{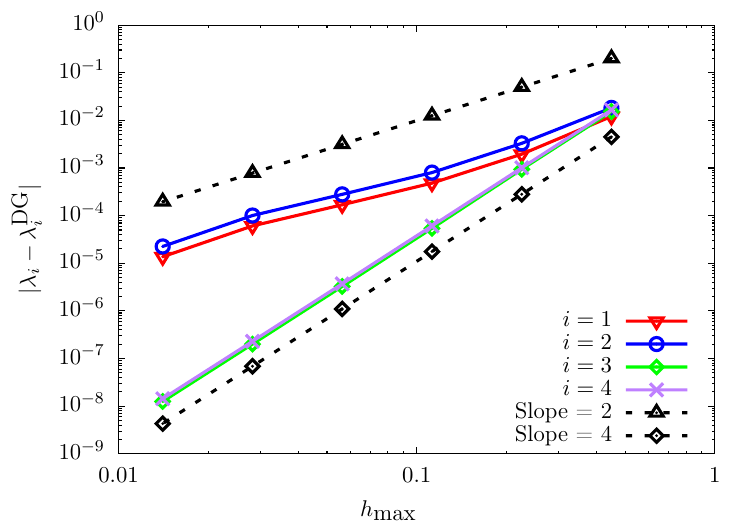}} 
\caption{(Example 2) The convergence of the first four eigenvalues.}
\label{fig:example_2_Eigenvalues_Error}
\end{figure}

\begin{figure}[h!]
  \centering 
\subfigure[$L^2$ norm error, $p=1$]{\includegraphics[width=0.45\textwidth]{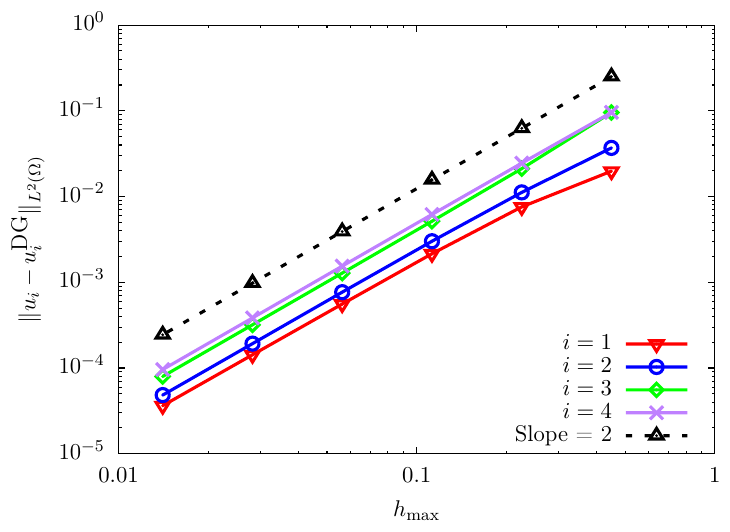}}
  \subfigure[DG norm error, $p=1$]{\includegraphics[width=0.45\textwidth]{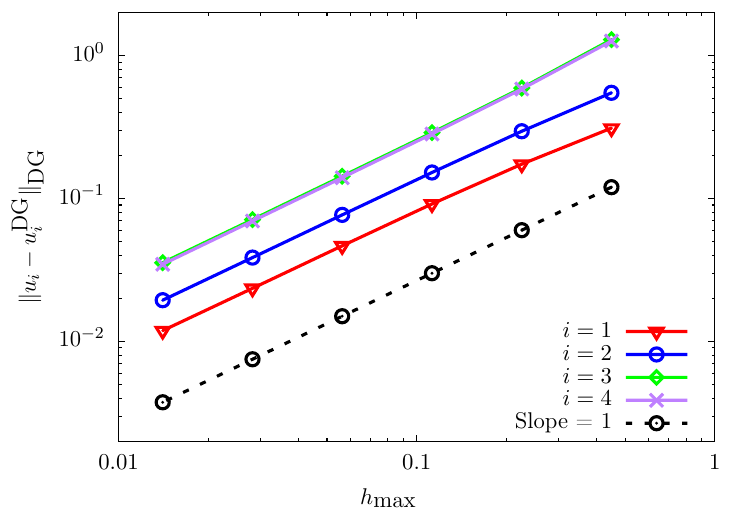}} 
 \subfigure[$L^2$ norm error,  $p=2$]{\includegraphics[width=0.45\textwidth]{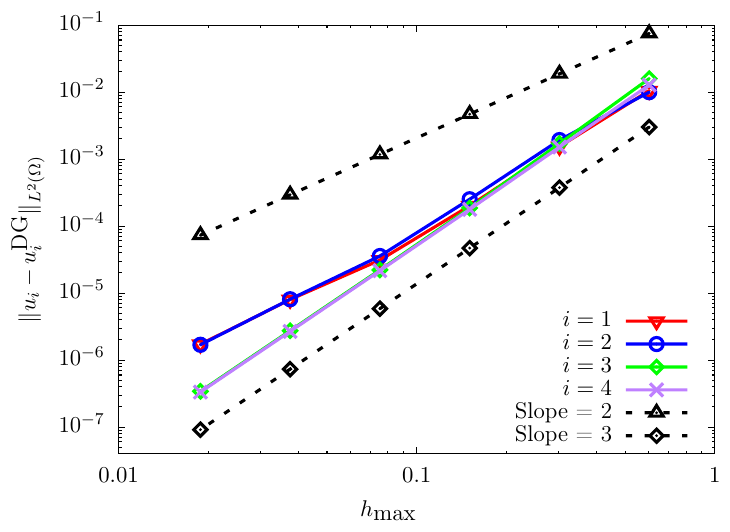}}
  \subfigure[DG norm error, $p=2$]{\includegraphics[width=0.45\textwidth]{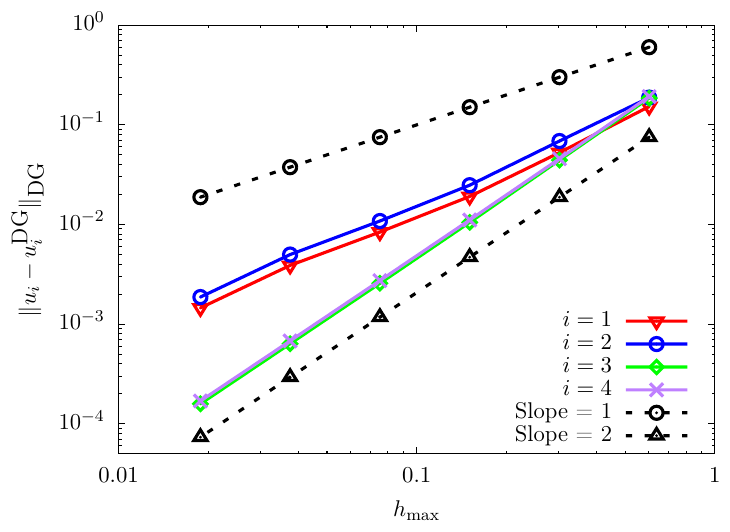}} 
\caption{(Example 2) The convergence of the first four eigenfunctions in $L^2$ and DG norms.}
\label{fig:example_2_EigenFunction_Error_p_1_2}
\end{figure}

\vspace{0.5cm}
\noindent{\bf Example 3 (modeling 2D Bose-Einstein condensates).}
Consider the following Gross-Pitaevskii equation originating from modeling the Bose-Einstein condensates \cite{pitaevskii2003bose}: Find $\lambda\in\mathbb{R}$ and $u\in H^1_0(\Omega)$ with $\|u\|_{L^2(\Omega)} = 1$ such that
\begin{eqnarray}\label{toy_model_3}
\left(-\frac{1}{2}\Delta + V + \rho \right)u= \lambda u,
\end{eqnarray}
where $\Omega=[-1,1]^2$, $\displaystyle V(\vr)=-1/|\vr|$, and $\rho(\vr)=|u(\vr)|^2$. For this nonlinear eigenvalue problem, the self-consistent field (SCF) iteration method \cite{kohn1965self} is utilized to linearize and iteratively solve the equation \eqref{toy_model_3}. The initial mesh for this example is the same as the one for Example 1, and the reference value for the first eigenvalue is $\lambda_1  = -0.5773370795 $. 

We present the convergence of the first eigenvalue and the corresponding eigenfunction in $L^2$ and DG norms in Figure \ref{fig:example_3_Eigenvalues_Error}. Although our theoretical analysis applies to the linear eigenvalue problem, very similar convergence results are observed for the nonlinear problem.

\begin{figure}[!t]
  \centering 
  {\includegraphics[width=0.45\textwidth]{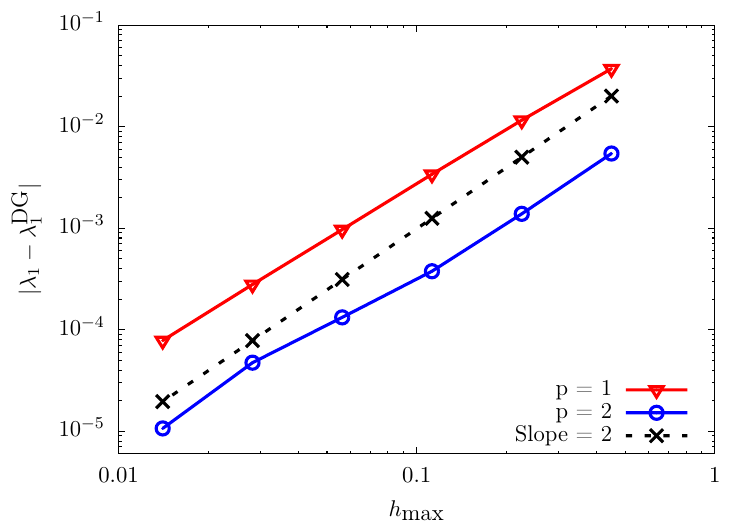}}
  {\includegraphics[width=0.45\textwidth]{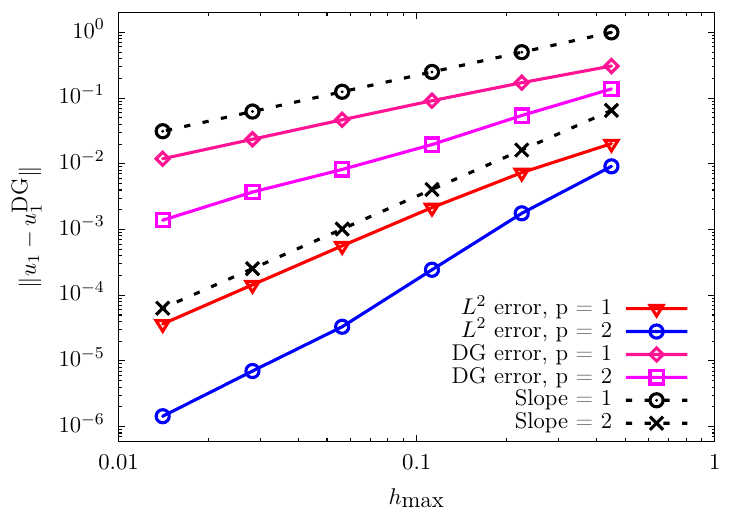}} 
  \caption{(Example 3) The convergence of the first eigenpair.  } 
  \label{fig:example_3_Eigenvalues_Error}
\end{figure}

\vspace{0.5cm}
\noindent{\bf Example 4 (3D linear problem for a single-atom system).}
Consider the three-dimensional linear eigenvalue problem: Find $\lambda\in\mathbb{R}$ and $u\in H^1_0(\Omega)$ with $\|u\|_{L^2(\Omega)} = 1$ such that
\begin{eqnarray}
\left(-\frac{1}{2}\Delta +V \right)u = \lambda u,
\end{eqnarray}
where $\Omega=[-2,2]^3$ and 
$\displaystyle V({\vr})=-1/|\vr|$. In this example, the patch that includes the origin is $[-0.5,0.5]^3$, and for the initial mesh, we set $h_{\rm max} = 1.5 $ and $h_{\rm min} = 0.25$. 
The reference value for the first eigenvalue is $\lambda_1  =  -0.2699102$, and the corresponding eigenfunction along the $x$-axis and the contour plot of $u_1 $ on the plane $z=0$ are shown in Figure \ref{fig:example_4_ref_uh}, from which it is found that the the DG-IGA method is able to accurately describe the cusp at the origin. For this three-dimensional linear eigenvalue problem, we have $u_1\in H^{5/2 - \epsilon}(\Omega)$ for any $\epsilon>0$, see \cite{maday2019regularity}. 

\begin{figure}[!t]
\centering 
\subfigure[$u_1$ along the $x$-axis]
{\includegraphics[width=0.45\textwidth,height=0.37\textwidth]{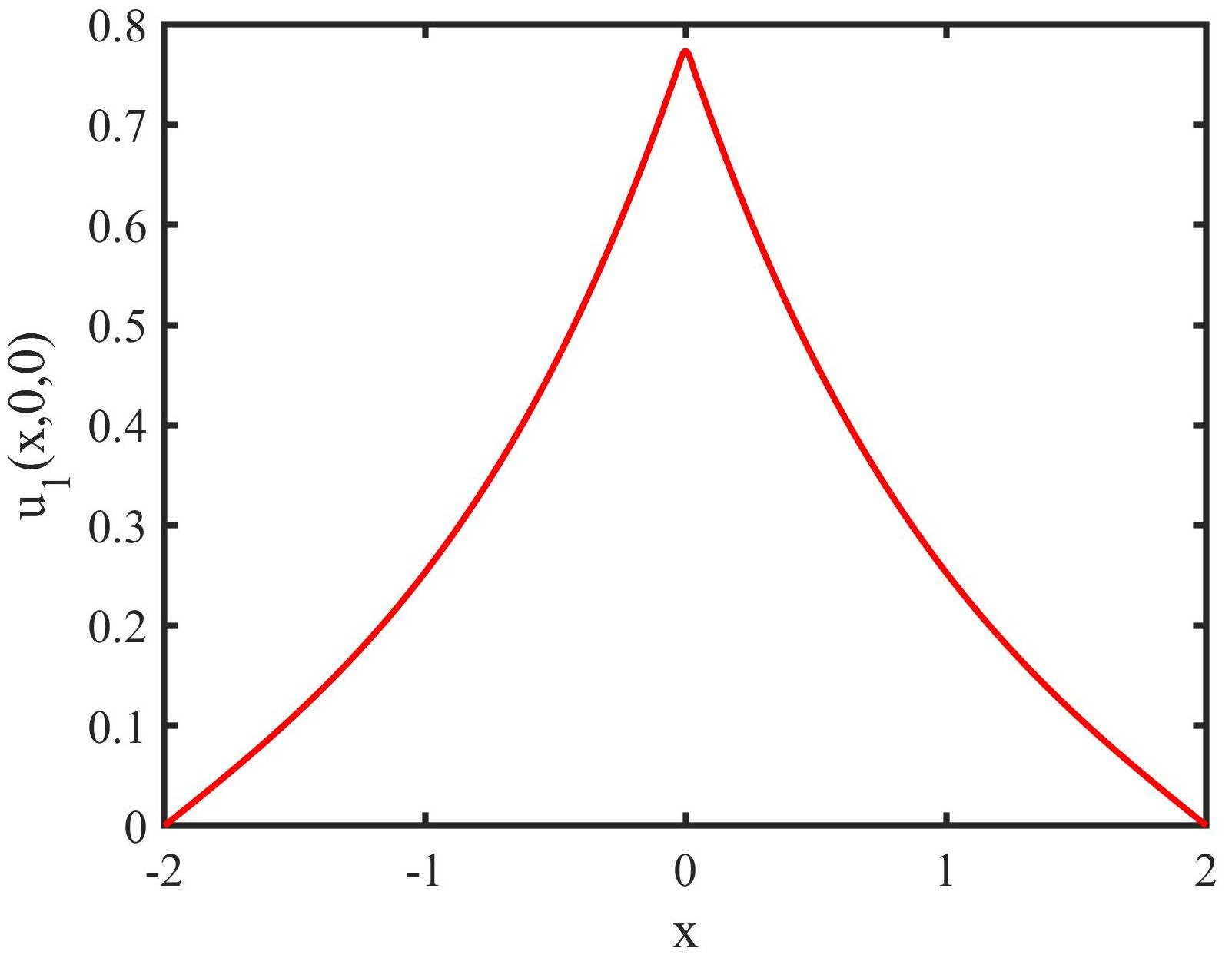}}
\hspace{0.2cm}
\subfigure[$u_1$ on the plane $z=0$]
{\includegraphics[width=0.45\textwidth]{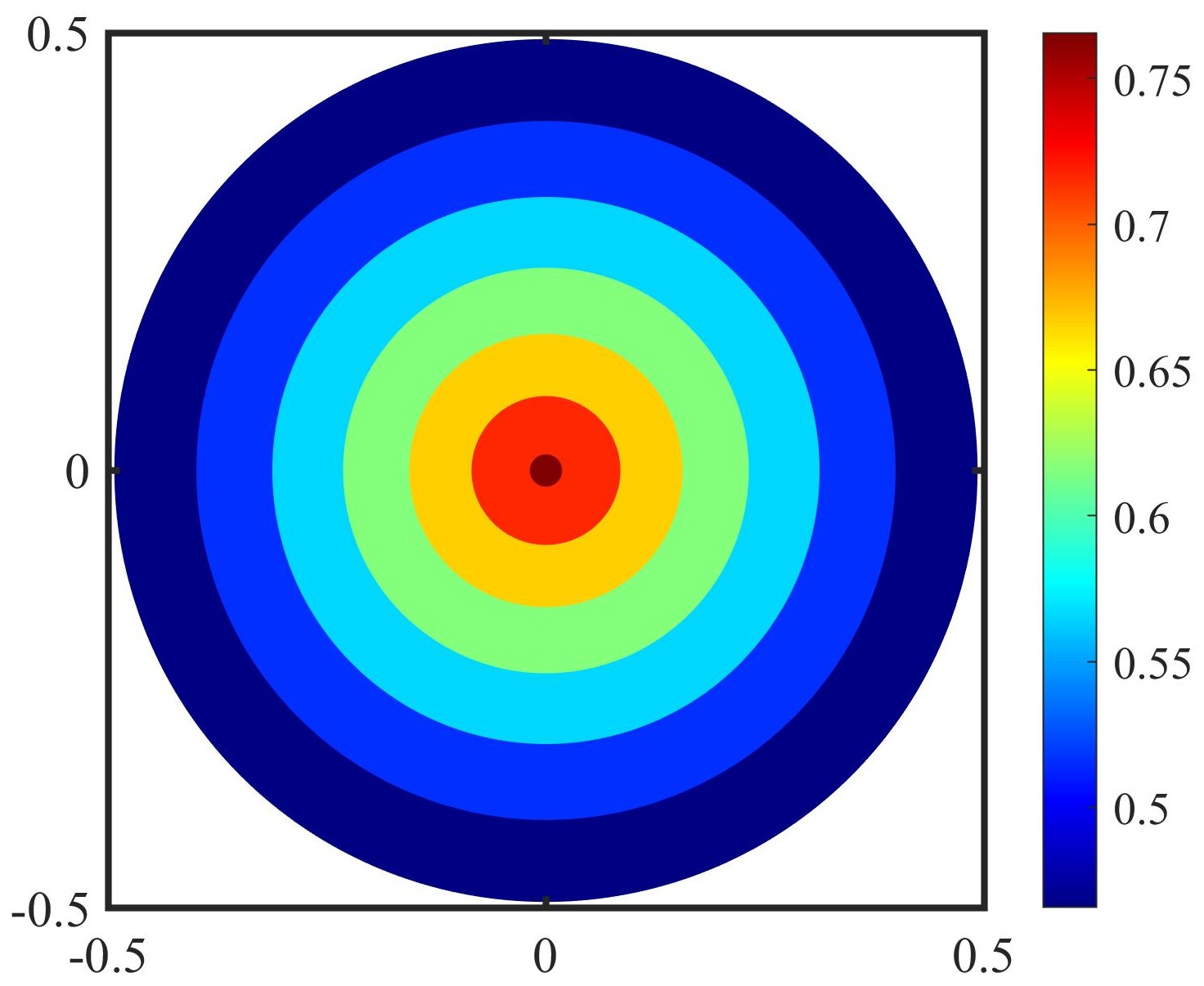}}
\caption{(Example 4)  (a) The eigenfunction $u_1 $ along the $x$-axis, and (b) the contour plot of $u_1$ on the plane $z=0$ with  $(x,y)\in [-0.5,0.5]^2$.} 
\label{fig:example_4_ref_uh}
\end{figure}

We present the convergence of the first eigenvalue and the corresponding eigenfunction in $L^2$ and DG norms in Figure \ref{fig:example_4_EigenFunction_Error}. Similar to the 2D linear eigenvalue problem, it is observed that the convergence rate depends on both the regularity of eigenfuntion and the degree of B-splines, which agrees well with the theoretical convergence result.

\begin{figure}[!t]
  \centering 
  {\includegraphics[width=0.45\textwidth]{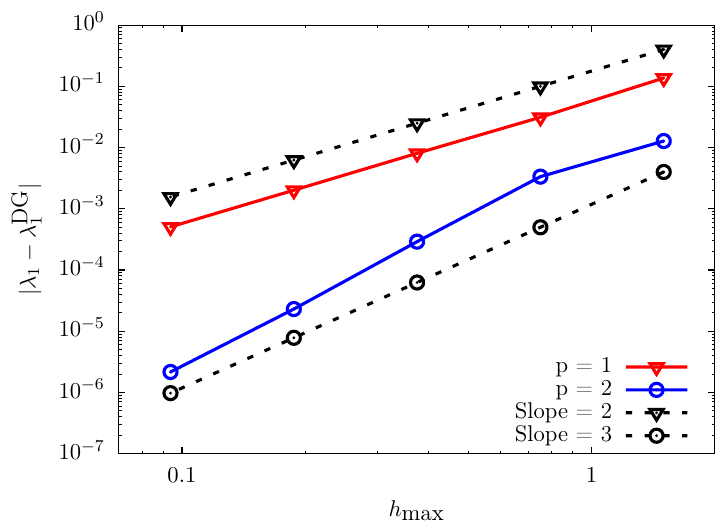}}
  {\includegraphics[width=0.45\textwidth]{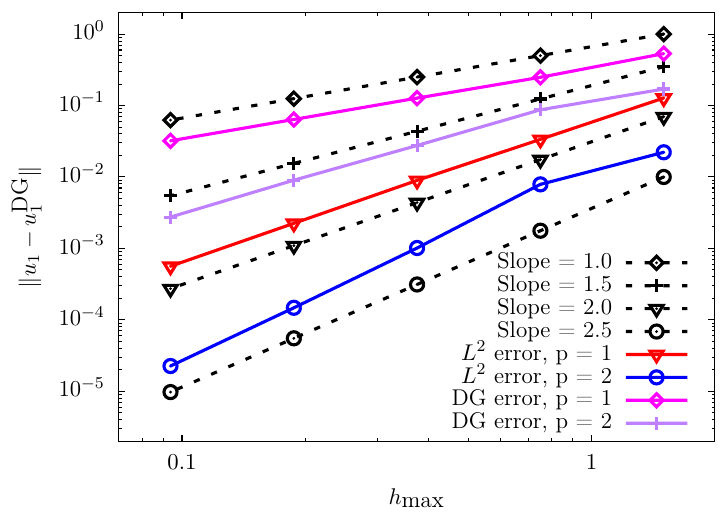}} 
  \caption{(Example 4) The convergence of the  first eigenpair. } \label{fig:example_4_EigenFunction_Error}
\end{figure}

\vspace{0.5cm}
\noindent{\bf Example 5 (simulation of a helium atom).}
In this example, we consider the simulation of a helium atom: Find $\lambda\in\mathbb{R}$ and $u\in H^1_0(\Omega)$ with $\|u\|_{L^2(\Omega)}=1$ such that
\begin{eqnarray}\label{toy_model_4}
\left(-\frac{1}{2}\Delta + V_{\rm ext} + V_{\rm H} [\rho] + V_{\rm xc}[\rho] \right)u = \lambda u,
\end{eqnarray}
where $\Omega=[-10,10]^3$, the external potential  $\displaystyle V_{\rm ext}({\bf r})=-\frac{2}{|\vr|}$, the Hartree potential 
$\displaystyle V_{\rm H} [\rho]=\int_{\mathbb{R}^3}\frac{\rho({\bf \vr'})}{|\cdot-{\vr'}|} \dd{\bf \vr'}$,  the electron density $\rho({\vr})=2|u(\vr)|^2$, and the exchange-correlation potential $V_{\rm xc}[\rho]$ is generated by the local density
approximation (LDA) \cite{MARQUES2012}. In the simulation, the patch that contains the origin is $[-1,1]^3$, and we set $h_{\rm max} = 4.5$ and $h_{\rm min} = 0.5$ in the initial mesh. 
In this example, we compute the ground state energy of the helium system (see, e.g., \cite{chen13} for a detailed expression of the total energy), and the reference ground state energy, $E^{\rm ref}  = -2.83428$ a.u., was obtained by the software \emph{NWChem} \cite{VALIEV20101477} with aug-cc-pv6z basis set. 

We present the distribution of electron density $\rho$ along the $x$-axis in Figure \ref{fig:example_5_uh_1_p_2}(a), and display the contour plot of $\rho$ on the plane $z=0$  with $(x,y)\in [-1,1]^2$ in Figure \ref{fig:example_5_uh_1_p_2}(b), where the numerical solution is obtained by the DG-IGA method with $h_{\rm max}=4.5/2^3$ and $p=2$. Similar to the 3D linear eigenvalue problem, it can be observed that the cusp at the origin has been accurately described.

\begin{figure}[!t]
  \centering 
  \subfigure[$\rho$ along the $x$-axis]
  {\includegraphics[width=0.435\textwidth]{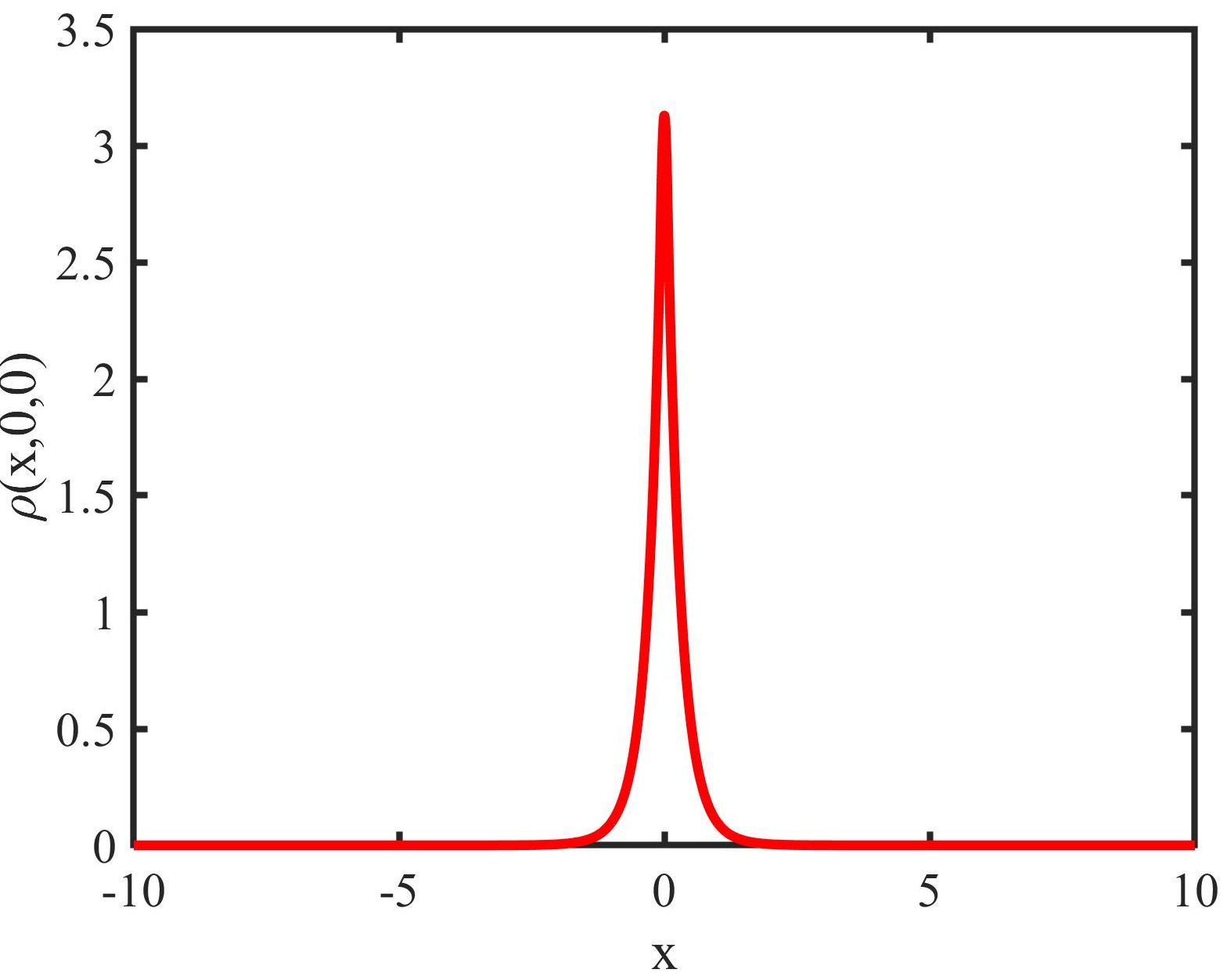}}
  \hspace{0.2cm}
  \subfigure[$\rho$ on the plane $z=0$]
{ \includegraphics[width=0.42\textwidth,height=0.35\textwidth]{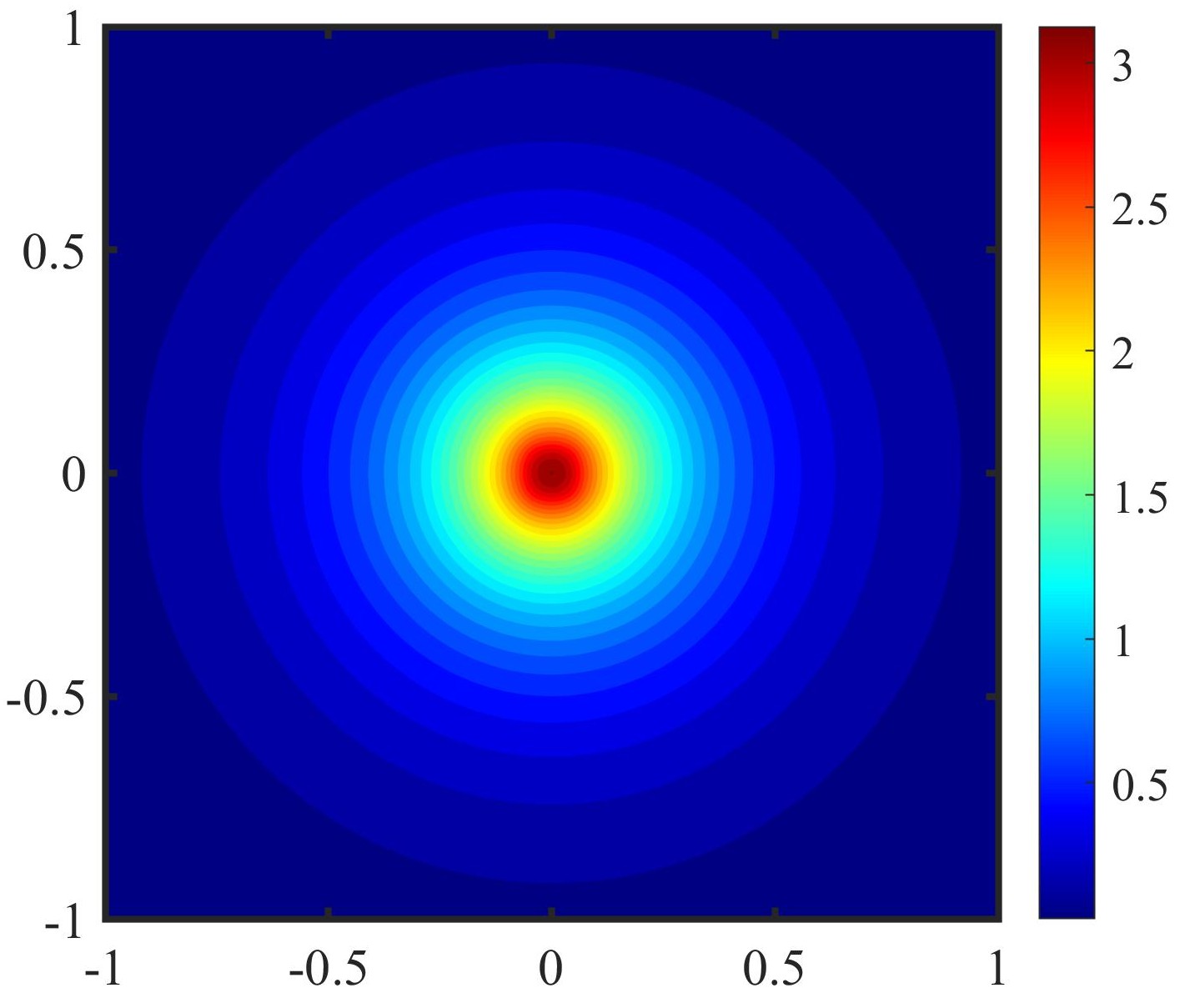}}
\caption{(Example 5) (a)  The electron density $\rho$ along $x$-axis, and (b) the contour plot of $\rho $ on the plane $z=0$ with $(x,y)\in[-1,1]^2$.} \label{fig:example_5_uh_1_p_2}
\end{figure}

We show the ground state energy obtained by the DG-IGA method
in Figure \ref{fig:example_5_Eigergy_Error}(a), from which we can clearly see that the energy is convergent to the reference energy as the initial mesh is successively refined. 
Figure \ref{fig:example_5_Eigergy_Error}(b) presents the convergence of the ground-state energy, showing that the convergence rate of the energy error is comparable to the square of the eigenfunction error in the DG-norm for the 3D linear eigenvalue problem, 
consistent with the energy error estimate in \cite{chen13}.

\begin{figure}[!ht]
  \centering 
  \subfigure[Computed ground state energy]{
 \includegraphics[width=0.47\textwidth]{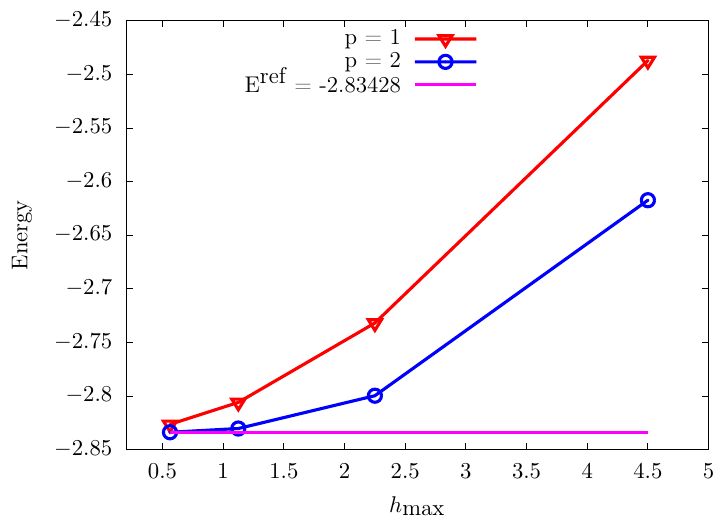} } 
 \subfigure[Error in energy]{
 \includegraphics[width=0.47\textwidth]{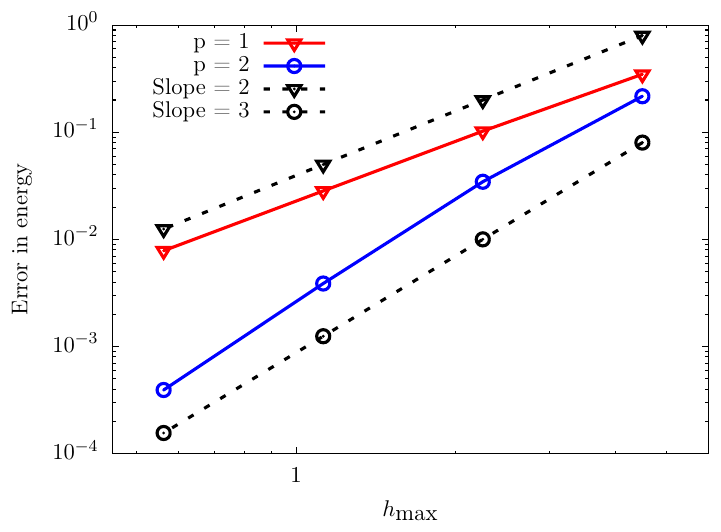} } 
  \caption{(Example 5) (a) The ground state energy computed by the DG-IGA method, and (b) the convergence of error in 
  the ground state energy.} \label{fig:example_5_Eigergy_Error}
\end{figure}

\section{Concluding remarks}
\label{sec-conclusions}

In this paper, we investigate the multi-patch DG-IGA approximation for full-potential electronic structure calculations. 
The computational domain is divided into several subdomains, where each part of the wavefunction is individually represented using B-spline functions, possibly with different degrees, over varying mesh sizes. 
These patches are then combined by DG methods. 
We provide an {\em a priori} error estimate of this DG-IGA approximation for linear eigenvalue problems, and present numerical experiments to support our theoretical results.
In addition, our error analysis suggests that higher-order convergence can be achieved through a multiscale refinement strategy adapted to the local regularity of the eigenfunctions (see Remark \ref{remark:multiscale-refinement}).

This work can serve as a unified analysis framework for the DG-IGA method applied to a class of elliptic eigenvalue problems, including those with discontinuous coefficients.
Beyond the accuracy and efficiency demonstrated in this paper, the DG scheme is also flexible and economical for adaptive procedures, as the nonconformity ensures that refinement is limited only to the subdomains where it is needed. An {\em a posteriori} error analysis and the development of an adaptive algorithm will be addressed in future work.

\section*{Acknowledgment}

X. Li would like to thank the support from the National Natural Science Foundation of China (No. 12301548) and the Open Project Program of Key Laboratory of Mathematics and Complex System (Grant No. K202302), Beijing Normal University.
The research of X. Meng was partially supported by the National Natural Science Foundation of China (Grant No. 12101057), 
the Scientific Research Fund of Beijing Normal University
(Grant No. 28704-111032105), and the Guangdong Higher Education Upgrading Plan (UIC-R0400024-21).

\appendix
\renewcommand\thesection{\appendixname~\Alph{section}}
\section{Proofs}
\label{append-proofs}
\renewcommand{\theequation}{A.\arabic{equation}}
\setcounter{equation}{0}

\subsection{Proof of Lemma \ref{lem:boundness}}
\label{append-boundness}

\begin{proof}
We first divide the left-hand side into three parts
\begin{eqnarray}\label{proof-r0}
	a^{\rm DG}(u, v) = I_1 + I_2 + I_3,
\end{eqnarray}
where
\begin{align*}
	&I_1 = \sum_{i=1}^N \int_{\Omega_i} \left( \frac{1}{2}\nabla u \cdot \nabla v + V uv  \right) \dd \vr, \\[1ex]
	&I_2 =  \frac{1}{2} \sum_{F_{ij}\in\facet} \int_{F_{ij}} \{\nabla u\} \cdot [v] \dd s
	+ \int_{F_{ij}} \{\nabla v\} \cdot [u] \dd s, \\[1ex]
	&I_3 = C_\sigma \sum_{F_{ij}\in\facet} \left(\frac{1}{h_i}+\frac{1}{h_j}\right) \int_{F_{ij}} [u] \cdot [v] \dd s.
\end{align*}
Since $V\in L^2(\Omega)$, we have
\begin{eqnarray}\label{proof-r1}
	I_1 \leq C \|u\|_{\rm DG} \sum_{i=1}^N \|v\|_{H^1(\Omega_i)}.
\end{eqnarray}
Using the trace inequality, $I_2$ can be estimated by
\begin{align}\label{proof-r2} 
	\nonumber
	& \hskip -0.2cm
	I_2 \leq C \sum_{F_{ij}\in\facet} \Big( \|[u] \|_{H^{\frac{1}{2}}(F_{ij})}\|\{\nabla v\}\|_{H^{-\frac{1}{2}}(F_{ij})}
	+ \|\{\nabla u\}\|_{H^{-\frac{1}{2}}(F_{ij})}\|[v]\|_{H^{\frac{1}{2}}(F_{ij})} \Big)
	\\[1ex] \nonumber
	&\leq C \sum_{F_{ij}\in\facet} \Big( \|[u]\|_{H^{\frac{1}{2}}(F_{ij})}(\|v\|_{H^{1}(\Omega_i)}+\|v\|_{H^{1}(\Omega_j)})
	+ (\|u\|_{H^{1}(\Omega_i)}+\|u\|_{H^{1}(\Omega_j)}) \|[v]\|_{H^{\frac{1}{2}}(F_{ij})} \Big)
	\\[1ex] 
	&\leq C \|u\|_{\rm DG} \|v\|_{\rm DG}.
\end{align}
Similarly, $I_3$ can be bounded by
\begin{eqnarray}\label{proof-r3}
	I_3 \leq C_\sigma \sum_{F_{ij}\in\facet} \left(\frac{1}{h_i}+\frac{1}{h_j}\right) \left\|[u] \right\|_{L^2(F_{ij})} \left\|[v] \right\|_{L^2(F_{ij})} \leq C \|u\|_{\dg} \|v\|_{\dg}.
\end{eqnarray}
Collecting \eqref{proof-r0} and the error bounds \eqref{proof-r1} to \eqref{proof-r3}, we prove the boundness \eqref{boundness}.
\end{proof}

\subsection{Proof of Lemma \ref{lem:coercive}}
\label{append-coercive}
\begin{proof}
Using the H\"{o}lder inequality, Sobolev's embedding theorem and Young's inequality, for any patch $\Omega_i$, we obtain that
\begin{multline*} 
	\qquad
	\left|\int_{\Omega_i} Vu^2 \dd\vr \right| 
	\leq \|V\|_{L^2({\Omega_i})}\cdot\|u^{\frac{1}{2}}\|_{L^4({\Omega_i})}\cdot \|u^{\frac{3}{2}}\|_{L^4({\Omega_i})}
	= C\|u\|^{\frac{1}{2}}_{L^2({\Omega_i})}\cdot\|u\|^{\frac{3}{2}}_{L^6({\Omega_i})}
	\\
	\leq C\|u\|^{\frac{1}{2}}_{L^2({\Omega_i})}\cdot\|u\|^{\frac{3}{2}}_{H^1({\Omega_i})}
	\leq C \left( \frac{\delta^{-4}}{4}\|u\|^2_{L^2({\Omega_i})}+\frac{3\delta^{\frac{3}{4}}}{4}\|u\|^{2}_{H^1({\Omega_i})} \right),
	\qquad
\end{multline*}
where $\delta>0$ is arbitrarily small.
Hence we have 
\begin{eqnarray}\label{coercive-veff}
\int_{\Omega_i} Vu^2 \dd\vr \geq -C\delta^{\frac{3}{4}}\|u\|^{2}_{H^1({\Omega_i})} -b\delta^{-4}\|u\|^2_{L^2({\Omega_i})}
\end{eqnarray}
with some positive constants $C$ and $b$.
Moreover, for any $F_{ij}\in\facet$, we have
\begin{align}
\label{coercive-jump}\nonumber
\left|\int_{F_{ij}}\{\nabla u\}\cdot[u]\dd s\right|
& \leq\|\{\nabla u\} \cdot {\vn}^{+}\|_{L^2(F_{ij})} \cdot
\|[u]\|_{L^2(F_{ij})}\\[1ex]\nonumber
& \leq C \Big(h_i^{-1} \|\nabla u\|^2_{L^2(\Omega_i)} + h_j^{-1} \|\nabla u\|^2_{L^2(\Omega_j)} \Big)^{\frac{1}{2}} \cdot
\|[u]\|_{L^2(F_{ij})}\\[1ex]\nonumber
&\leq \delta^2 \Big(\|\nabla u\|^2_{L^2(\Omega_i)} + \|\nabla u\|^2_{L^2(\Omega_j)} \Big)
+ \delta^{-2} (h_i^{-1}+h_j^{-1}) \|[u]\|^2_{L^2(F_{ij})}\\[1ex]
&\leq C\delta^2 (\|u\|^2_{H^1(\Omega_i)} + \|u\|^2_{H^1(\Omega_j)}) + \delta^{-2} (h_i^{-1}+h_j^{-1}) \|[u]\|^2_{L^2(F_{ij})},
\end{align}
where the trace inequality \eqref{trace-inequality-eq} is used in the second inequality.
Using \eqref{coercive-veff} and \eqref{coercive-jump}, we can derive \eqref{coercive} and complete the proof. 
\end{proof}

\subsection{Proof of Theorem \ref{thm:T-approximate}}
\label{append-T-approximation}
\begin{proof}
Denote $w=Tf$ and $w^{\rm DG}=T^{\rm DG}f$.
We define the projection $\hat{\Pi} w=\Pi_i w|_{\Omega_i}$ for $1\leq i\leq N$ and decompose the error $e=w-w^{\rm DG}$ as $e=\eta+\xi$, where
$\eta=w-\hat{\Pi} w$ and $\xi=\hat{\Pi} w-w^{\rm DG}$.
With simple calculations, we can easily obtain that $a^{\rm DG}(w,\xi)=(f,\xi)$, which leads to the property that $a^{\rm DG}(w-w^{\rm DG},\xi)=0$.
Using \eqref{coercive}, we have
\begin{eqnarray*}
	\|\xi\|^2_{\rm DG}\leq C a^{\rm DG}(\xi,\xi) = Ca^{\rm DG}(e-\eta,\xi) = -Ca^{\rm DG}(\eta,\xi).
\end{eqnarray*}
It follows from \eqref{boundness} that
\begin{eqnarray*}
\|\xi\|_{\rm DG} \leq C \|\eta\|_{\rm DG}.
\end{eqnarray*}
We obtain from \eqref{multi-patch-projection-err} that
if $u\in \widetilde{H}^{\vs}(\Omega)$, then we have for any $1\leq i\leq N$
\begin{eqnarray*}
\|\eta\|_{H^{1}(\Omega_i)}
\leq C h_i^{t_i-1} \|w\|_{H^{t_i}(\Omega_i)}
\end{eqnarray*}
with $t_i=\min\{p_i+1,s_i\}$.
Together with
\begin{eqnarray*}
\|[\eta]\|_{L^2(F_{ij})} \leq C \left( \|\eta\|_{H^{\frac{1}{2}}(\Omega_i)}+\|\eta\|_{H^{\frac{1}{2}}(\Omega_j)} \right)
\leq C \left( h_i^{t_i-\frac{1}{2}} \|w\|_{H^{t_i}(\Omega_i)} + h_j^{t_j-\frac{1}{2}} \|w\|_{H^{t_i}(\Omega_i)} \right),
\end{eqnarray*}
it leads to
\begin{eqnarray*}
\|w-w^{\rm DG}\|_{\rm DG} \leq \|\eta\|_{\rm DG}+\|\xi\|_{\rm DG} \leq C 
\sum_{i=1}^N  h_i^{t_i-1} \|Tf\|_{H^{t_i}(\Omega_i)}.
\end{eqnarray*}
The proof is complete.
\end{proof}

\subsection{Proof of Theorem \ref{thm:error_L2}}
\label{append-error-L2}
\begin{proof}
Denote $w=Tf$ and $w^{\rm DG}=T^{\rm DG}f$. 
Let $\varphi\in H^1_0(\Omega)$ be the solution to the dual source problem for $g\in L^2(\Omega)$ such that
\begin{eqnarray}
\label{dual-source}
a(v, \varphi) = (g, v)  \qquad \forall~v\in H^1_0(\Omega),
\end{eqnarray}
and $\varphi^{\rm DG}$ be the approximation of $\varphi$ in $V_{\vh}$ such that
\begin{eqnarray}
\label{dual-approximation}
a^{\rm DG}(v, \varphi^{\rm DG}) = (g, v)  \qquad \forall~v\in V_{\vh}.
\end{eqnarray}

According to Riesz representation theorem, we have
\begin{eqnarray}\label{riesz_L2_eq}
	\| w-w^{\rm DG}\|_{L^2(\Omega)} = \sup_{g\in L^2(\Omega), g\ne 0} \frac{(g, w-w^{\rm DG})}{\|g\|_{L^2(\Omega)}}.
\end{eqnarray}
We recall that $a^{\rm DG}(w-w^{\rm DG}, v)=0$ for any $v\in V_{\vh}$ in the proof of Theorem \ref{thm:T-approximate}.
It also suggests that $a^{\rm DG}(v, \varphi-\varphi^{\rm DG})=0$ for any $v\in V_{\vh}$ hold for the dual problem due to the symmetry of the bilinear form.
We then derive that
\begin{align}\nonumber
	(g, w-w^{\rm DG}) & = a^{\rm DG}(w, \varphi) - a^{\rm DG}(w^{\rm DG}, \varphi^{\rm DG})   \\[1ex]\nonumber
	& = a^{\rm DG}(w-w^{\rm DG}, \varphi) + a^{\rm DG}(w^{\rm DG}, \varphi - \varphi^{\rm DG}) \\[1ex]\nonumber
	& = a^{\rm DG}(w-w^{\rm DG}, \varphi - \varphi^{\rm DG}) + a^{\rm DG}(w-w^{\rm DG},  \varphi^{\rm DG})  +  a^{\rm DG}(w^{\rm DG}, \varphi - \varphi^{\rm DG}) \\[1ex]\label{dual-estimate-eq}
	& = a^{\rm DG}(w-w^{\rm DG}, \varphi - \varphi^{\rm DG}).
\end{align} 
Similar to \eqref{regularity-estimate-eq}, the regularity estimate $\displaystyle	\|\varphi\|_{H^2(\Omega)} \leq C \|g\|_{L^2(\Omega)}$ holds for the dual problem.
Using Theorem \ref{thm:T-approximate}, the regularity estimate, the boundness \eqref{boundness}, and \eqref{riesz_L2_eq}, we have
\begin{align}\nonumber
\| w-w^{\rm DG}\|_{L^2(\Omega)} & \leq C \sup_{g\in L^2(\Omega), g\ne 0} \frac{\|w-w^{\rm DG}\|_{\rm DG} \| \varphi - \varphi^{\rm DG}\|_{\rm DG}}{\|g\|_{L^2(\Omega)}} \\[1ex]\nonumber
& \leq C h_{\rm max} \sup_{g\in L^2(\Omega), g\ne 0} \frac{\|w-w^{\rm DG}\|_{\rm DG} 
	\| \varphi \|_{H^2(\Omega)}}{\|g\|_{L^2(\Omega)}} \\[1ex]\nonumber
& \leq C h_{\rm max}  \|w-w^{\rm DG}\|_{\rm DG},
\end{align}
which completes the proof.
\end{proof}

\subsection{Proof of Theorem \ref{thm:eigen-source-bound}}
\label{append-eigen-source-bound}
\begin{proof}
From the convergence of solution operator $\|T-T^{\dg}\|_{\mathscr{L}(L^2(\Omega))} \rightarrow 0$, we can obtain \eqref{source-eigenvalue-a} (see \cite{yang2010eigenvalue}).
Using the identity
\begin{eqnarray}\nonumber
(Tu_k^{\dg} - T^{\dg} u_k^{\dg}, u_k) = (u_k^{\dg} ,Tu_k) - (\lambda_k^{\dg})^{-1} (u_k^{\dg} , u_k) = \left( \lambda_k^{-1} - (\lambda_k^{\dg})^{-1} \right) (u_k^{\dg}, u_k),
\end{eqnarray}
we can derive
\begin{align}\nonumber
	\lambda^{\dg}_k - \lambda_k & = \frac{\lambda_k \lambda_k^{\dg}}{(u_k, u^{\dg}_k)} (Tu_k^{\dg} - T^{\dg} u_k^{\dg}, u_k) \\[1ex]\nonumber
	& = \frac{\lambda_k \lambda_k^{\dg}}{(u_k, u_k^{\dg})} \Big((T-T^{\dg})u_k, u_k) + ((T-T^{\dg})(u_k^{\dg}-u_k), u_k)) \Big)\\[1ex]\nonumber
	& = \frac{\lambda_k \lambda_k^{\dg}}{(u_k, u_k^{\dg})} ((T-T^{\dg})u_k, u_k) + R_1,
\end{align}
which leads to the convergence of eigenvalue.
From the symmetry of $T$ and $T^{\dg}$, $\lambda_k\rightarrow \lambda_k^{\dg} $ and \eqref{source-eigenvalue-a}, it follows that
\begin{align}\nonumber
	|R_1| & = \bigg| \frac{\lambda_k \lambda_k^{\dg}}{(u_k, u_k^{\dg})}  \left( (T-T^{\dg})(u_k^{\dg}-u_k), u_k \right) \bigg| \\[1ex]\nonumber
	& \leq C \Big| \left( (u_k^{\dg}-u_k), (T-T^{\dg}) u_k \right) \Big| \\[1ex]\nonumber
	& \leq C \|u_k^{\dg}-u_k\|_{L^2(\Omega)} \|(T-T^{\dg}) u_k \|_{L^2(\Omega)}  \leq C \| (T-T^{\dg})u_k \|^2_{L^2(\Omega)}.
\end{align}
Furthermore, let us recall the dual problems \eqref{dual-source} and \eqref{dual-approximation} with the source term $g=u_k$, which gives rise to the solution $\varphi=u_k/\lambda_k$. 
Similar to the derivations in \eqref{dual-estimate-eq}, we can obtain
\begin{equation*}
(Tu_k-T^{\dg}u_k, u_k) = a^{\dg}(Tu_k-T^{\dg}u_k, \varphi-v) \qquad \forall~ v\in V_{\vh}.
\end{equation*}
This leads to the estimate
\begin{equation*}
\left| (Tu_k-T^{\dg}u_k, u_k) \right| \leq C \|(T-T^{\dg})u_k\|_{\dg} \inf_{v\in V_{\vh}} \|u_k-v\|_{\dg},
\end{equation*}
which yields \eqref{source-eigenvalue-b}.

From the coerciveness and definition of $T^{\dg}$, we have 
\begin{align}\nonumber
	\| u^{\dg}_k - \lambda_k T^{\dg} u_k \|^2_{\dg} & \leq C a^{\dg}(u^{\dg}_k - \lambda_k T^{\dg} u_k, u^{\dg}_k - \lambda_k T^{\dg} u_k) \\[1ex]\nonumber
	& = C (\lambda_k^{\dg} u^{\dg}_k - \lambda_k u_k, u^{\dg}_k - \lambda_k T^{\dg} u_k) \\[1ex]\nonumber
	& \leq C \| \lambda^{\dg}_k u^{\dg}_k - \lambda_k u_k \|_{L^2(\Omega)}  
	\| u^{\dg}_k - \lambda_k T^{\dg} u_k \|_{L^2(\Omega)} \\[1ex]\nonumber
	& \leq C \left( \| \lambda^{\dg}_k u^{\dg}_k - \lambda_k u_k \|_{L^2(\Omega)}   + 
	\| u^{\dg}_k - \lambda_k T^{\dg} u_k \|_{L^2(\Omega)} \right)^2.
\end{align}
Using \eqref{source-eigenvalue-a}, \eqref{source-eigenvalue-b} and the fact $u_k=\lambda_k Tu_k$, we derive that
\begin{align}\nonumber
& \| u^{\dg}_k - \lambda_k T^{\dg} u_k \|_{\dg} \\[1ex]\nonumber
\leq & ~C \left( \| \lambda^{\dg}_k u^{\dg}_k - \lambda_k u_k \|_{L^2(\Omega)} + \| u^{\dg}_k - \lambda_k T^{\dg} u_k \|_{L^2(\Omega)} \right) \\[1ex]\nonumber
\leq & ~C \left( |\lambda^{\dg}_k-\lambda_k| 
+ |\lambda_k| \cdot \|u^{\dg}_k-u_k\|_{L^2(\Omega)} + \| u^{\dg}_k - u_k \|_{L^2(\Omega)}
+ \| u_k - \lambda_k T^{\dg} u_k\|_{L^2(\Omega)} \right) \\[1ex]
\leq & ~C \| (T-T^{\dg})u_k \|_{L^2(\Omega)}.
\label{process-eq}
\end{align}
We then split the error into two terms
\begin{eqnarray}\nonumber
\| u^{\dg}_k - u_k \|_{\dg} = |\lambda_k| \cdot \| Tu_k-T^{\dg}u_k \|_{\dg} + R_2.
\end{eqnarray}
Using the triangle inequality and \eqref{process-eq}, we can bound the residual term 
\begin{align}\nonumber
|R_2| & =  \Big| \|u^{\dg}_k-u_k \|_{\dg} - |\lambda_k| \cdot \| Tu_k-T^{\dg}u_k \|_{\dg}  \Big| \\[1ex]\nonumber
& \leq \Big| \|u^{\dg}_k-u_k \|_{\dg} - \| u_k- \lambda_k T^{\dg}u_k \|_{\dg}  \Big| \\[1ex]\nonumber
& \leq \| u^{\dg}_k - \lambda_k T^{\dg}u_k \|_{\dg} \leq C \| (T-T^{\dg})u_k \|_{L^2(\Omega)}.
\end{align}
The proof is complete.
\end{proof}

\bibliographystyle{plain}
\bibliography{ref}

\begin{thebibliography}{10}

\bibitem{almbladh1985exact}
C.O. Almbladh and U.~von Barth.
\newblock Exact results for the charge and spin densities, exchange-correlation
  potentials, and density-functional eigenvalues.
\newblock {\em Phys. Rev. B}, 31(6):3231, 1985.

\bibitem{antonietti06}
P.F. Antonietti, A.~Buffa, and I.~Perugia.
\newblock Discontinuous {G}alerkin approximation of the {L}aplace eigenproblem.
\newblock {\em Comput. Methods Appl. Mech. Engrg.}, 195(25):3483--3503, 2006.

\bibitem{arnold82}
D.N. Arnold.
\newblock An interior penalty finite element method with discontinuous
  elements.
\newblock {\em SIAM J. Numer. Anal.}, 19(4):742--760, 1982.

\bibitem{arnold00}
D.N. Arnold, F.~Brezzi, B.~Cockburn, and Marini D.
\newblock {\em {Discontinuous Galerkin Methods for Elliptic Problems}}.
\newblock Springer Berlin Heidelberg, 2000.

\bibitem{babuska73}
I.~Babu\v{s}ka and M.~Zl\'{a}mal.
\newblock Nonconforming elements in the finite element method with penalty.
\newblock {\em SIAM J. Numer. Anal.}, 10(5):863--875, 1973.

\bibitem{bachmayr14}
M.~Bachmayr, H.~Chen, and R.~Schneider.
\newblock Error estimates for {H}ermite and even-tempered {G}aussian
  approximations in quantum chemistry.
\newblock {\em Numer. Math.}, 128(1):137--165, 2014.

\bibitem{BAO20124967}
G.~Bao, G.~Hu, and D.~Liu.
\newblock An $h$-adaptive finite element solver for the calculations of the
  electronic structures.
\newblock {\em J. Comput. Phys.}, 231(14):4967--4979, 2012.

\bibitem{bao2013numerical}
G.~Bao, G.~Hu, and D.~Liu.
\newblock {Numerical solution of the Kohn-Sham equation by finite element
  methods with an adaptive mesh redistribution technique}.
\newblock {\em J. Sci. Comput.}, 55:372--391, 2013.

\bibitem{batcho2000computational}
P.F. Batcho.
\newblock Computational method for general multicenter electronic structure
  calculations.
\newblock {\em Phys. Rev. E}, 61(6):7169, 2000.

\bibitem{bazilevs2006isogeometric}
Y.~Bazilevs, L.~Beir\~{a}o~da Veiga, J.A. Cottrell, T.J.R. Hughes, and
  G.~Sangalli.
\newblock Isogeometric analysis: approximation, stability and error estimates
  for h-refined meshes.
\newblock {\em Math. Models Methods Appl. Sci.}, 16(07):1031--1090, 2006.

\bibitem{beck2000real}
T.L. Beck.
\newblock Real-space mesh techniques in density-functional theory.
\newblock {\em Rev. Mod. Phys.}, 72(4):1041, 2000.

\bibitem{da2014mathematical}
L.~Beir\~{a}o~da Veiga, A.~Buffa, G.~Sangalli, and R.~V{\'a}zquez.
\newblock Mathematical analysis of variational isogeometric methods.
\newblock {\em Acta Numer.}, 23:157--287, 2014.

\bibitem{brenner08}
S.C. Brenner and L.R. Scott.
\newblock {\em {The Mathematical Theory of Finite Element Methods 3rd ed.}}
\newblock Springer-Verlag, 2008.

\bibitem{buffa08}
A.~Buffa and P.~Monk.
\newblock Error estimates for the ultra weak variational formulation of the
  {H}elmholtz equation.
\newblock {\em ESAIM Math. Model. Numer. Anal.}, 42(6):925--940, 2008.

\bibitem{cances00}
E.~Canc\`{e}s.
\newblock {SCF} algorithms for {HF} electronic calculations.
\newblock In M.~Defranceschi and C.~Le Bris, editors, {\em Mathematical Models
  and Methods for Ab Initio Quantum Chemistry}, volume~74, pages 17--43.
  Springer Berlin Heidelberg, 2000.

\bibitem{cances12}
E.~Canc\`{e}s, R.~Chakir, and Y.~Maday.
\newblock Numerical analysis of the planewave discretization of some
  orbital-free and {K}ohn–{S}ham models.
\newblock {\em ESAIM Math. Model. Numer. Anal.}, 46(2):341--388, 2012.

\bibitem{CHAN201822}
J.~Chan and J.A. Evans.
\newblock {Multi-patch discontinuous Galerkin isogeometric analysis for wave
  propagation: Explicit time-stepping and efficient mass matrix inversion}.
\newblock {\em Comput. Methods Appl. Mech. Engrg.}, 333:22--54, 2018.

\bibitem{chen2014adaptive}
H.~Chen, X.~Dai, X.~Gong, L.~He, and A.~Zhou.
\newblock {Adaptive finite element approximations for Kohn-Sham models}.
\newblock {\em Multiscale Model. Simul.}, 12(4):1828--1869, 2014.

\bibitem{chen13}
H.~Chen, X.~Gong, L.~He, Z.~Yang, and A.~Zhou.
\newblock Numerical analysis of finite dimensional approximations of
  {K}ohn–{S}ham models.
\newblock {\em Adv. Comput. Math.}, 38(2):225--256, 2013.

\bibitem{chen15b}
H.~Chen and R.~Schneider.
\newblock Error estimates of some numerical atomic orbitals in molecular
  simulations.
\newblock {\em Commun. Comput. Phys.}, 18(1):125--146, 2015.

\bibitem{cimrman2018convergence}
R.~Cimrman, M.~Nov{\'a}k, R.~Kolman, M.~T\r{u}ma, J.~Ple{\v{s}}ek, and
  J.~Vack{\'a}{\v{r}}.
\newblock {Convergence study of isogeometric analysis based on B{\'e}zier
  extraction in electronic structure calculations}.
\newblock {\em Appl. Math. Comput.}, 319:138--152, 2018.

\bibitem{cimrman2018isogeometric}
R.~Cimrman, M.~Nov{\'a}k, R.~Kolman, M.~T\r{u}ma, and J.~Vack{\'a}{\v{r}}.
\newblock Isogeometric analysis in electronic structure calculations.
\newblock {\em Math. Comput. Simul.}, 145:125--135, 2018.

\bibitem{dai2008convergence}
X.~Dai, J.~Xu, and A.~Zhou.
\newblock Convergence and optimal complexity of adaptive finite element
  eigenvalue computations.
\newblock {\em Numer. Math.}, 110(3):313--355, 2008.

\bibitem{DG_IGA_Eigenvalue_Codes}
DGIGA-Solver.
\newblock \url{https://github.com/mxc2014/DG_IGA_EigenValue_Problem}.

\bibitem{duvigneau2018isogeometric}
R.~Duvigneau.
\newblock {Isogeometric analysis for compressible flows using a discontinuous
  Galerkin method}.
\newblock {\em Comput. Methods Appl. Mech. Engrg.}, 333:443--461, 2018.

\bibitem{evans2022partial}
L.C. Evans.
\newblock {\em {Partial Differential Equations}}, volume~19.
\newblock American Mathematical Society, 2022.

\bibitem{fang2012kohn}
J.~Fang, X.~Gao, and A.~Zhou.
\newblock {A Kohn-Sham equation solver based on hexahedral finite elements}.
\newblock {\em J. Comput. Phys.}, 231(8):3166--3180, 2012.

\bibitem{flad08}
H.J. Flad, R.~Schneider, and B.W. Schulze.
\newblock Asymptotic regularity of solutions to {H}artree-{F}ock equations with
  {C}oulomb potential.
\newblock {\em Math. Meth. Appl. Sci.}, 31(18):2172--2201, 2008.

\bibitem{fournais02}
S.~Fournais, M.~Hoffmann-Ostenhof, T.~Hoffmann-Ostenhof, and
  T.~\O{}stergaard~S\o{}rensen.
\newblock The electron density is smooth away from the nuclei.
\newblock {\em Commun. Math. Phys.}, 228(3):401--415, 2002.

\bibitem{fournais04}
S.~Fournais, M.~Hoffmann-Ostenhof, T.~Hoffmann-Ostenhof, and
  T.~\O{}stergaard~S\o{}rensen.
\newblock Analyticity of the density of electronic wavefunctions.
\newblock {\em Ark. Mat.}, 42(1):87--106, 2004.

\bibitem{fournais07}
S.~Fournais, M.~Hoffmann-Ostenhof, T.~Hoffmann-Ostenhof, and
  T.~\O{}stergaard~S\o{}rensen.
\newblock Non-isotropic cusp conditions and regularity of the electron density
  of molecules at the nuclei.
\newblock {\em Ann. Henri Poincar\'{e}}, 8(4):731--748, 2007.

\bibitem{harriman03}
K.~Harriman, P.~Houston, B.~Senior, and E.~S\"{u}lin.
\newblock $hp$-version discontinuous {G}alerkin methods with interior penalty
  for partial differential equations with nonnegative characteristic form.
\newblock In S.Y. Cheng, C.W. Shu, and T.~Tang, editors, {\em Recent Advances
  in Scientific Computing and Partial Differential Equations}, pages 89--120.
  Springer-Verlag, 2003.

\bibitem{herring40}
C.~Herring.
\newblock A new method for calculating wave functions in crystals.
\newblock {\em Phys. Rev.}, 57(12):1169--1177, 1940.

\bibitem{hoffmann01}
M.~Hoffmann-Ostenhof, T.~Hoffmann-Ostenhof, and T.~\O{}stergaard~S\o{}rensen.
\newblock Electron wavefunctions and densities for atoms.
\newblock {\em Ann. Henri Poincaré}, 2(1):77--100, 2001.

\bibitem{hughes2005isogeometric}
T.J.R. Hughes, J.A. Cottrell, and Y.~Bazilevs.
\newblock {Isogeometric analysis: CAD, finite elements, NURBS, exact geometry
  and mesh refinement}.
\newblock {\em Comput. Methods Appl. Mech. Engrg.}, 194(39-41):4135--4195,
  2005.

\bibitem{kohn1965self}
W.~Kohn and L.J. Sham.
\newblock Self-consistent equations including exchange and correlation effects.
\newblock {\em Phys. Rev.}, 140(4A):A1133, 1965.

\bibitem{langer2015analysis}
U.~Langer and I.~Toulopoulos.
\newblock {Analysis of multipatch discontinuous Galerkin IgA approximations to
  elliptic boundary value problems}.
\newblock {\em Comput. Vis. Sci.}, 17:217--233, 2015.

\bibitem{lebris03}
C.~Le~Bris.
\newblock {\em Handbook of Numerical Analysis}, volume X, special issue:
  Computational Chemistry.
\newblock North-Holland, 2003.

\bibitem{li2019discontinuous}
X.~Li and H.~Chen.
\newblock {A discontinuous Galerkin scheme for full-potential electronic
  structure calculations}.
\newblock {\em J. Comput. Phys.}, 385:33--50, 2019.

\bibitem{lin12}
L.~Lin, J.~Lu, L.~Ying, and W.~E.
\newblock Adaptive local basis set for {K}ohn-{S}ham density functional theory
  in a discontinuous {G}alerkin framework {I}: {T}otal energy calculation.
\newblock {\em J. Comput. Phys.}, 231(4):2140--2154, 2012.

\bibitem{maday2019regularity}
Y.~Maday and C.~Marcati.
\newblock {Regularity and $hp$ discontinuous Galerkin finite element
  approximation of linear elliptic eigenvalue problems with singular
  potentials}.
\newblock {\em Math. Models Methods Appl. Sci.}, 29(08):1585--1617, 2019.

\bibitem{MARQUES2012}
M.A. Marques, Micael~J.T. Oliveira, and T.~Burnus.
\newblock Libxc: A library of exchange and correlation functionals for density
  functional theory.
\newblock {\em Comput. Phys. Commun.}, 183(10):2272--2281, 2012.

\bibitem{martin05}
R.M. Martin.
\newblock {\em Electronic Structure: Basic Theory and Practical Methods}.
\newblock Cambridge University Press, 2005.

\bibitem{MICHOSKI2016658}
C.~Michoski, J.~Chan, L.~Engvall, and J.A. Evans.
\newblock {Foundations of the blended isogeometric discontinuous Galerkin
  (BIDG) method}.
\newblock {\em Comput. Methods Appl. Mech. Engrg.}, 305:658--681, 2016.

\bibitem{money2005algorithm}
J.H. Money and Q.~Ye.
\newblock {Algorithm 845: EIGIFP: A MATLAB program for solving large symmetric
  generalized eigenvalue problems}.
\newblock {\em ACM Trans. Math. Software}, 31(2):270--279, 2005.

\bibitem{moore2019space}
S.E. Moore.
\newblock {Space-time multipatch discontinuous Galerkin isogeometric analysis
  for parabolic evolution problems}.
\newblock {\em SIAM J. Numer. Anal.}, 57(3):1471--1493, 2019.

\bibitem{osborn75}
J.E. Osborn.
\newblock Spectral approximation for compact operators.
\newblock {\em Math. Comput.}, 29(131):712--725, 1975.

\bibitem{pask1999real}
J.E. Pask, B.M. Klein, C.Y. Fong, and P.A. Sterne.
\newblock Real-space local polynomial basis for solid-state
  electronic-structure calculations: A finite-element approach.
\newblock {\em Phys. Rev. B}, 59(19):12352, 1999.

\bibitem{pask2001finite}
J.E. Pask, B.M. Klein, P.A. Sterne, and C.Y. Fong.
\newblock Finite-element methods in electronic-structure theory.
\newblock {\em Comput. Phys. Commun.}, 135(1):1--34, 2001.

\bibitem{pitaevskii2003bose}
L.P. Pitaevskii and S.~Stringari.
\newblock {Bose-Einstein Condensation, Clarendon}, 2003.

\bibitem{schumaker2007spline}
L.L. Schumaker.
\newblock {\em {Spline Functions: Basic Theory}}.
\newblock {Cambridge University Press}, 2007.

\bibitem{suli00}
E.~S\"{u}li, C.~Schwab, and P.~Houston.
\newblock $hp$-{DGFEM} for partial differential equations with nonnegative
  characteristic form.
\newblock In B.~Cockburn, G.~Karniadakis, and C.W. Shu, editors, {\em
  Discontinuous Galerkin Finite Element Methods. Theory, Computation and
  Applications}. Springer-Verlag, 2000.

\bibitem{sun2016finite}
J.~Sun and A.~Zhou.
\newblock {\em {Finite Element Methods for Eigenvalue Problems}}.
\newblock CRC Press, 2016.

\bibitem{temizer2020nurbs}
{\.I}.~Temizer, P.~Motamarri, and V.~Gavini.
\newblock {NURBS-based non-periodic finite element framework for Kohn-Sham
  density functional theory calculations}.
\newblock {\em J. Comput. Phys.}, 410:109364, 2020.

\bibitem{tsuchida1995electronic}
E.~Tsuchida and M.~Tsukada.
\newblock Electronic-structure calculations based on the finite-element method.
\newblock {\em Phys. Rev. B}, 52(8):5573, 1995.

\bibitem{VALIEV20101477}
M.~Valiev, E.J. Bylaska, N.~Govind, K.~Kowalski, T.P. Straatsma, H.J.J. {Van
  Dam}, D.~Wang, J.~Nieplocha, E.~Apra, T.L. Windus, and W.A. {de Jong}.
\newblock {NWChem: A comprehensive and scalable open-source solution for large
  scale molecular simulations}.
\newblock {\em Comput. Phys. Commun.}, 181(9):1477--1489, 2010.

\bibitem{yang2008order}
Y.~Yang and Z.~Chen.
\newblock The order-preserving convergence for spectral approximation of
  self-adjoint completely continuous operators.
\newblock {\em Sci. China Ser. A}, 51:1232--1242, 2008.

\bibitem{yang2010eigenvalue}
Y.~Yang, Z.~Zhang, and F.~Lin.
\newblock Eigenvalue approximation from below using non-conforming finite
  elements.
\newblock {\em Sci. China Math.}, 53:137--150, 2010.

\bibitem{zhang17}
G.~Zhang, L.~Lin, W.~Hu, C.~Yang, and J.E. Pask.
\newblock Adaptive local basis set for {K}ohn-{S}ham density functional theory
  in a discontinuous {G}alerkin framework {II}: {F}orce, vibration, and
  molecular dynamics calculations.
\newblock {\em J. Comput. Phys.}, 335:426--443, 2017.

\end{thebibliography}

\end{document}